\documentclass[11pt]{amsart}

\usepackage[T1]{fontenc}
\usepackage[margin=1.17in]{geometry}
\usepackage{amsmath,amssymb,mathtools,mathrsfs}
\usepackage{enumitem}
\usepackage{microtype}
\usepackage[colorlinks=true,
  citecolor=blue,
  linkcolor=blue,
  urlcolor=blue]{hyperref}

\allowdisplaybreaks[2]
\numberwithin{equation}{section}

\newtheorem{theorem}{Theorem}[section]
\newtheorem{proposition}[theorem]{Proposition}
\newtheorem{lemma}[theorem]{Lemma}
\newtheorem{corollary}[theorem]{Corollary}
\theoremstyle{definition}

\newtheorem{example}[theorem]{Example}
\theoremstyle{remark}
\newtheorem{remark}[theorem]{Remark}

\newcommand{\E}{\mathbb E}
\newcommand{\PP}{\mathbb P}
\newcommand{\N}{\mathbb N}
\newcommand{\R}{\mathbb R}
\newcommand{\Z}{\mathbb Z}
\newcommand{\ind}{\mathbf 1}
\newcommand{\Pois}{\operatorname{Poisson}}
\newcommand{\Var}{\operatorname{Var}}

\newcommand{\cum}{\textsf{cum}}
\newcommand{\Inf}{\operatorname{Inf}}

\newcommand{\dTV}{d_{\mathrm{TV}}}
\newcommand{\cL}{\mathcal L}
\newcommand{\cP}{\mathcal P}
\newcommand{\cR}{\mathcal R}
\newcommand{\cD}{\mathfrak D}
\newcommand{\cZ}{\mathcal Z}
\newcommand{\cC}{\mathcal C}
\newcommand{\cT}{\mathcal T}
\newcommand{\cG}{\mathcal G}
\newcommand{\cM}{\mathcal M}

\newcommand{\cJ}{\mathcal J}
\newcommand{\sZ}{\mathscr Z}
\newcommand{\fH}{\mathfrak H}
\newcommand{\fE}{\mathfrak E}
\newcommand{\noi}{\noindent}
\newcommand{\bul}{\bullet}
\newcommand{\dd}{\,\mathrm d}
\newcommand{\ot}{\otimes}
\newcommand{\sot}{\widetilde\otimes}
\newcommand{\norm}[1]{\left\lVert #1\right\rVert}
\newcommand{\abs}[1]{\left\lvert #1\right\rvert}

 \newcommand{\eps}{\varepsilon}

\newcommand{\AND}{\quad{\rm and}\quad}

\hypersetup{
  pdftitle={Four-moment criteria for Poisson convergence on Poisson
    and Rademacher chaoses},
  pdfauthor={Guangqu Zheng}
}

\makeatletter
\@namedef{subjclassname@2020}{\textup{2020} Mathematics Subject Classification}
\makeatother

\title[FMC  for Poisson convergence on Poisson and Rademacher chaoses]
{Four-moment criteria for Poisson convergence\\
on Poisson and Rademacher chaoses}

\author{Guangqu Zheng}

\address{Department of Mathematics and Statistics,
Boston University, 665 Commonwealth Avenue,   
Boston, MA 02215, USA}
\email{gzheng90@bu.edu}

\date{\today}

\subjclass[2020]{Primary 60F05, 60H07;
Secondary 60G50, 60J10, 60E15}

\keywords{Poisson approximation $\cdot$ Poisson chaos  $\cdot$ 
Rademacher chaos  $\cdot$  four-moment criterion  $\cdot$ 
unit-jump rigidity  $\cdot$  coordinate rigidity  $\cdot$ 
maximal influence  $\cdot$  exchangeable pairs  $\cdot$  Chen--Stein method}

\begin{document}

\begin{abstract}
In this paper, 
we establish Poisson limit theorems on Poisson and Rademacher
chaoses. Our principal result is a total-variation bound,
 valid in both settings, for an integer-valued functional
whose highest-order chaos is dominant. The approximation error is the
sum of the pure-chaos four-moment terms and an explicit remainder
controlled by the $L^4$-size of the lower-order chaoses. On Poisson
space the pure-chaos term is controlled solely by the moment defect,
whereas on Rademacher space an additional maximal-influence correction
is required. When the lower-order remainder vanishes, we recover
exactly the corresponding total-variation bound for a shifted pure
chaos. As consequences, for nonnegative integer-valued shifts of
random variables in a fixed Poisson chaos, convergence of the first
four moments is equivalent to convergence in total variation to a
Poisson law, together with uniform integrability of the fourth powers;
on a fixed Rademacher chaos, the analogous conclusion holds under a
vanishing maximal-influence condition.

The proof reveals a {\it unit-jump rigidity} phenomenon: the
four-moment defect simultaneously suppresses unwanted spectral
components and rules out non-unit jumps. We show, through an explicit
quadratic counterexample, that the maximal-influence condition in the
Rademacher setting is necessary for a general Poisson limit theorem.
Finally, for every order $q\geq2$, we construct pure Poisson-chaos
sequences with vanishing moment defect that converge weakly to a
centered Poisson law. These examples show that the exact higher-order
rigidity is not uniform once the lattice condition is removed. In both
the Poisson and Rademacher settings, our proofs follow a unified
strategy combining the Chen--Stein method, exchangeable pairs, and
Ledoux's spectral viewpoint.

\end{abstract}

\maketitle


\section{Introduction and main results}

The law of small numbers asserts that the
$\text{Binomial}(n,p)$ distribution is close to the
$\text{Poisson}(\lambda)$ distribution, where
$\lambda\in(0,\infty)$, whenever
$(n, p, np)\to(+\infty, 0,  \lambda)$.
Up to its deterministic mean, a $\text{Binomial}(n,p)$ random variable belongs to the
Rademacher chaos of order $1$. 
Indeed, if $B_1,\ldots,B_n$ are independent Bernoulli random variables
with parameter $p$, then

\noi
\begin{align*}
\text{Binomial}(n,p)
=np+\sqrt{p(1-p)}
\sum_{k=1}^nY_k,
\qquad
Y_k:=\frac{B_k-p}{\sqrt{p(1-p)}}.
\end{align*}

\noi
This elementary fact naturally suggests studying Poisson
approximation beyond linear statistics, and in particular Poisson limit
theorems on shifted Rademacher chaoses of order $d\geq 1$.
A parallel problem arises on the Poisson space. Inspired by the
fourth-moment phenomena in the Gaussian, Poisson, and Rademacher settings
\cite{NP05, DP18a, DVZ18, DK19, Zhe19}, we address the following more
ambitious question.

\medskip

 {\bf Question ($\bigstar$):} Can one establish four-moment criteria
({\bf FMC}) for Poisson convergence on shifted Poisson and
Rademacher chaoses?

\medskip

In this work, we answer  Question ($\bigstar$) affirmatively.
By contrast, owing to the diffusive  nature of Gaussian Wiener chaos,
we do not expect an analogous Poisson convergence result in the Gaussian
setting; see Remark \ref{rem_no_G}.

\medskip

In Section \ref{SEC_11}, we briefly review fourth-moment phenomena for normal approximation. 
We then state the common dominant-chaos theorem in Section \ref{SEC_12},
followed by the pure Poisson- and Rademacher-chaos criteria in
Sections \ref{SEC_12} and \ref{SEC_13}, respectively.

\subsection{Fourth moment phenomena for normal approximation}
\label{SEC_11}

The term ``chaos'' refers to an orthogonal decomposition of an
$L^2$-space generated by a given source of randomness, notably Gaussian,
Poisson, or Rademacher processes. Formally, one has
\[
L^2(\mathbb{P})
=
\bigoplus_{q\geq 0}\cC_q,
\]
where $\cC_q$ is called the $q$-th chaos. More precisely, 

\smallskip
\noi
(i) An isonormal Gaussian process gives rise to the
Wiener--It\^o decomposition into multiple Gaussian integrals.
The chaos $\cC_q$ may also be represented in terms of Hermite
polynomials of independent standard Gaussian random variables;
see, e.g., \cite[Definition 2.2.3]{NP12}.

\smallskip
\noi
(ii)  A Poisson random measure gives rise to the Poisson
Wiener--It\^o decomposition into multiple Poisson integrals, which is
closely related to Charlier polynomials; see, e.g.,
\cite[Proposition 6.2.9]{Pri09} and \cite{LP18}.

\smallskip
\noi
(iii) An independent sequence of two-point random variables gives rise
to the Walsh--Rademacher decomposition into homogeneous sums;
see, e.g., \cite[Section 6]{Pri08}.

In each setting, $\cC_q$ is the eigenspace of the corresponding
Ornstein--Uhlenbeck generator associated with the eigenvalue $-q$.
This common spectral structure is one of the fundamental principles of modern stochastic analysis.

\medskip
\noi
$\bul$ {\bf Normal approximation on a Gaussian space.}
 In their seminal paper \cite{NP05},   Nualart and
  Peccati proved that,
for a variance-one sequence belonging to a fixed Gaussian Wiener chaos,
convergence of the fourth moment to three is equivalent to convergence in
distribution to a standard normal. 
Peccati and Tudor
\cite{PT05} established the corresponding multivariate result.
An important methodological advance was the characterization in terms of
Malliavin derivatives obtained by Nualart and Ortiz-Latorre
\cite{NOL08}. Nourdin and Peccati \cite{NP09} subsequently combined
Malliavin calculus with Stein's method of normal approximation to 
quantify the fourth moment theorem, among many other things.
The resulting Malliavin--Stein theory is developed systematically in the
monograph \cite{NP12}. Later works established optimal rates
\cite{NP15}, entropy and information-theoretic refinements \cite{NPS14, LNP15}, 
and interpretations based on Markov generators.
In particular, Ledoux \cite{Led12} and Azmoodeh, Campese, and Poly
\cite{ACP14} showed that, in a diffusive setting, spectral information
and the carr\'e du champ can replace explicit contraction computations.
Bourguin, Campese, Leonenko, and Taqqu \cite{BCLT19} subsequently
established four-moment theorems for Pearson targets on Markov chaoses.
More recently, Basse-O'Connor, Kramer-Bang, and Svendsen \cite{BKS25}
began extending fourth-moment criteria beyond a single eigenspace by
treating certain finite and infinite sums of Gaussian multiple
integrals; see also a refinement in \cite{DDN26}.
The historical account in \cite[Part~I]{ZheThesis} organizes several
developments prior to 2018 around the interplay among exchangeable pairs,
the Ornstein--Uhlenbeck generator, and the carr\'e du champ. Two points
from that thesis are especially relevant here. First, the
Nualart--Ortiz-Latorre criterion explains how a moment identity can
control a Malliavin quantity. Second, the exchangeable-pair construction
extends to non-diffusive spaces, although the failure of the derivation
property produces additional jump terms. The continuous
exchangeable-pair realization on Wiener chaos was developed by Nourdin
and the author \cite{NZ20}; its Poisson and Rademacher counterparts
provide the couplings used below.
 The present paper continues this program for a Poisson target.
The Stein operator for normal approximation is replaced by the Chen--Stein
operator for Poisson approximation, while the non-diffusive chain-rule
remainder is identified with a {\it spectral defect} measuring the
failure of the relevant increments to be unit jumps; 
see Proposition  \ref{prop_J}.
This provides the common mechanism 
underlying the two probability spaces considered here.

\medskip
\noi
$\bul$ {\bf Normal approximation on a Poisson space.}
On the Poisson space, early Malliavin--Stein estimates were obtained by
Peccati, Sol\'e, Taqqu, and Utzet \cite{PSTU10}, and by Peccati and
Zheng \cite{PZ10}. Early attempts to establish fourth-moment theorems
(for a Gaussian target)
and related quantitative bounds on Poisson chaoses include
\cite{PT08,LRP13,ET14,PZ14,Sch16}; 
these works treated double,  triple integrals, 
or kernels satisfying additional structural assumptions.
D\"obler and Peccati \cite{DP18a} proved the first general
fourth-moment theorem on a fixed Poisson chaos by combining Ledoux's
spectral approach, Mecke-type identities, and discrete Malliavin--Stein method.
Their qualitative and quantitative results were obtained under certain
technical assumptions. D\"obler, Vidotto, and the author \cite{DVZ18}
subsequently gave an exchangeable-pair proof under the sole assumption
of a finite fourth moment. Their work also contains a multivariate
theorem,   a Peccati-Tudor type result. 
D\"obler and
Peccati \cite{DP18b} then completed the analytic picture by deriving
necessary and sufficient contraction and carr\'e-du-champ criteria from
a product formula valid under minimal integrability assumptions. 
See also \cite{CP25} for further product formulas in this direction.
More recently, the author \cite{Zhe26} established a Kolmogorov bound
for the fourth-moment theorem under the sole assumption of a finite
fourth moment, thereby complementing the quantitative estimates in
\cite{DP18a,DVZ18}. Related results for non-Gaussian targets include the
four-moment Gamma theorems in \cite{FT16,DP18a}.

\medskip
\noi
$\bul$ {\bf Normal approximation in a Rademacher setting.}
The discrete Malliavin--Stein method was
initiated in \cite{NPR10} for sequences of independent symmetric
Rademacher random variables, and was subsequently extended to the
non-symmetric and non-homogeneous setting in \cite{KRT16,KRT17, Zhe17}.
D\"obler and Krokowski \cite{DK19} proved a fourth-moment theorem on
Rademacher chaos under an additional maximal-influence condition on the
kernels. They also constructed counterexamples showing that 
an unrestricted fourth-moment theorem fails on every symmetric Rademacher chaos of order at least two. 
The maximal-influence condition is reminiscent of the Lindeberg-type assumption 
in de Jong's central limit theorem for generalized multilinear forms \cite{deJong90}. 
The author \cite{Zhe19} subsequently used a coordinate-refresh exchangeable pair
to establish a Peccati--Tudor-type theorem in the Rademacher setting.

\medskip
\noi
$\bul$ {\bf Central theme of this paper: Poisson approximation.}
Peccati \cite{Pec12} combined discrete Malliavin
calculus on the Poisson space with the Chen--Stein method \cite{Chen75}.
As an application of his general Malliavin bounds  for Poisson approximation, 
he  treated an integer-valued 
perturbed chaos consisting of a fixed-order multiple integral, a deterministic
shift and lower-order terms.  
Bourguin and Peccati \cite{BP14} proved
portmanteau inequalities for mixed Gaussian--Poisson targets and
multidimensional clustering.  Privault and Torrisi \cite{PT15} and
Krokowski \cite{Kro17} developed corresponding bounds on Bernoulli and
Rademacher product spaces.  Decreusefond, Schulte, and Th\"ale
\cite{DST16} established functional Poisson approximation for Poisson
and binomial $U$-statistics, with applications throughout stochastic
geometry.  Recent work of Liu and Privault \cite{LP25} identifies a
normal-to-Poisson phase transition for subgraph counts in a  
random-connection model. 
These results are typically expressed
through Malliavin gradients, contractions, cumulants, or local dependence. 
Let us also mention the work  \cite{Her20}
by  Herry, who developed an abstract stable-limit theory for
Poisson functionals with conditionally Gaussian or Poisson targets. In
the Poisson case, his criterion combines convergence of a Malliavin
covariance term to a possibly random intensity with the asymptotic
unit-jump condition
\[
\E\int | u_n(z)D^+_zF_n(D^+_zF_n -1) | \nu(dz)\to 0
\]
where $F_n = \delta(u_n)$ is the Skorokhod divergence of $u_n$;
see \cite[($\mathrm{P}_3$)-($\mathrm{P}_4$)]{Her20}.
The   term $D^+_zF_n(D^+_zF_n-1)$, with $D^+_z$ discrete Malliavin derivative,
 is closely related to the spectral defect  $\cJ(F)$  appearing
in \eqref{J_limit}: both quantify the failure of Malliavin increments to behave as
unit jumps. Herry's result applies to general Poisson functionals and
yields stable convergence to Poisson mixtures, but it is qualitative
and is not formulated as a moment criterion. Our results specialize to
shifted random variables in finitely many chaoses 
and show that the relevant Malliavin conditions
can be captured by four moments, while also yielding quantitative
bounds in total variation. Finally, let us point out 
 a related noncommutative precedent,  the mixed
third--fourth-moment criterion of Nourdin and Peccati \cite{NPfree13}
for centered free Poisson limits on a fixed Wigner chaos; Bourguin
\cite{Bou15} obtained an analogous fourth-moment criterion on the free
Poisson algebra.

The Question $(\bigstar)$ here is classical and different:
When does the four-moment relation suffice to force Poisson
convergence?
Our answer to Question $(\bigstar)$ has two main parts:

\smallskip
(i) On both Poisson and Rademacher spaces, an integer-valued functional
with a dominant highest-order chaos satisfies a  
four-moment bound in total variation, together with an explicit
remainder measuring the lower-order chaoses; see
Theorem~\ref{thm_pert}.  On Poisson space the leading-chaos error is
moment-only, whereas on Rademacher space it contains 
an additional  maximal-influence correction.

\smallskip
(ii) Setting the lower-order remainder equal to zero gives exactly the
pure-chaos total-variation bounds and the {\bf FMC}  in
Theorems~\ref{thm_P} and \ref{thm_R}.  In the Rademacher case, an exact
quadratic counterexample (Proposition~\ref{prop_counter}) shows that
the additional maximal-influence condition cannot in general be
removed.

\medskip

The lattice condition is the mechanism that turns the moment relations
into the one-unit-jump structure required by the Chen--Stein method.
In Theorem~\ref{thm_pert}, it is imposed on the full functional,
whereas the leading chaos itself need not be lattice-valued.  The pure
chaos bounds are recovered by setting the lower-order remainder equal
to zero.  Section~\ref{SEC_43} complements these results with two pure
higher-order Poisson examples.  Once the exact lattice condition is
removed, one may still have vanishing moment defect
and Poisson law in the limit.
This distinction between an exact lattice criterion and limiting
dominant-chaos behavior guides the organization below.

\medskip

Before stating the main results, we fix some notations and 
introduce the lattice condition. 

\medskip
\noi
{\bf $\bul$ Notations.} Let $\N:=\{1,2,\ldots\}$, $\N_0:=\{0\}\cup\N$,  $m_k(X):=\E[X^k]$, 
and
let $\cL(X)$ denote   the law of a real-valued random variable $X$.
For $r\in\N$,  if $X$ has a finite $r$-th moment, its $r$-th
cumulant is denoted by $\cum_r(X)$ and  in particular, if $X$ is centered, 
we have 

\noi
\begin{align}
 \cum_2(X)=m_2(X),
 \quad
 \cum_3(X)=m_3(X)
 \AND \cum_4(X)=m_4(X)-3m_2(X)^2.
\notag 
\end{align}
Cumulants are additive  under independent sums.  
Moreover, if
$P_\lambda\sim\Pois(\lambda)$, then
$\cum_r(P_\lambda)=\lambda$ for every $r\geq 1$; this notation $P_\lambda$
 for a
Poisson random variable will be used throughout the paper.
The first four   moments of $P_\lambda$ are

\noi
\begin{align}
 \lambda,
 \quad \lambda^2+\lambda,
 \quad \lambda^3+3\lambda^2+\lambda,
 \quad \lambda^4+6\lambda^3+7\lambda^2+\lambda,
 \label{4M}
\end{align}
and its centered moments of orders $2,3,4$ are
\begin{align}\label{center_m}
\lambda,  \quad  \lambda,  \quad 3\lambda^2+\lambda.
\end{align}
For probability measures $\nu_1,\nu_2$ on $\N_0$, 
we define the total-variation distance
by 

\noi
\begin{align}
 \dTV(\nu_1,\nu_2)
 :=\sup_{A\subseteq\N_0}
 \big|\nu_1(A) - \nu_2(A) \big|.
 \label{def_TV}
\end{align}
Often we write $\dTV(X, Y)$ for $\dTV(\cL(X), \cL(Y))$.
We also write $\|X\|_s$ for the $L^s(\PP)$-norm of $X$.

\medskip

 A basic and natural assumption throughout this work
 is the following {\it lattice condition}.
 
 \smallskip
 
\noi
{\bf $\bul$ Lattice condition.}  A centered random variable $F$ is said to satisfy the lattice condition 
with shift $\theta$
if   $\theta \in \R$ is deterministic 
and

\noi
\begin{align}\label{cond_La}
\text{$F+ \theta\in\N_0$ almost surely.}
\end{align}
When $F+\theta$ is compared with $\Pois(\theta)$,
we additionally assume $\theta\in(0, \infty)$.
See Remark \ref{rem_lattice} for its relation to the 
unit-jump rigidity.

\subsection{Dominant-chaos approximation in the two settings}
\label{SEC_12}

Let $(\cZ,\sZ,\mu)$ be a $\sigma$-finite measure
space, let $\eta$ be a Poisson random measure 
with control measure $\mu$,
and let $\cC_{q}^{\mathsf{P}}$ denote its $q$-th Poisson chaos.
For a centered random variable $F\in L^4(\PP)$, define
the {\it moment defect}

\noi
\begin{align}
 \cP(F)
 :=m_4(F)-3m_2(F)^2-2m_3(F)+m_2(F),
 \label{def_P}
\end{align}
which vanishes when $F$ is a centered Poisson random variable
in view of \eqref{center_m}. 
Equivalently,  $\cP(F)  =\cum_4(F)-2\cum_3(F)+\cum_2(F).$
Since the cumulants are additive under independent sums, 
the moment defect $\cP$ is additive under independent sums.

We next fix the Rademacher setting. Let $(X_k)_{k\geq1}$ be an independent,
possibly non-symmetric and non-homogeneous Rademacher sequence 
such that

\noi
\begin{align}
\PP(X_{k}=1)=p_{k}\in(0,1)
\AND
\PP(X_{k}= -1)=q_{k}:=1-p_{k}.
\notag 
\end{align}

\noi
Let $Y=(Y_k)_{k\geq1}$ be its normalized version, given by

\noi
\begin{align}\label{Rad_norm}
Y_k:= \frac{ X_{k} -  p_{k} + q_{k} }{  2\sqrt{p_k q_k}  }.
\end{align}

\noi
Let $\fH:=\ell^2(\N)$ and for $d\geq1$, denote by
$\fH_0^{\odot d}$ the space of symmetric,
square-summable kernels on $\N^d$ that vanish on diagonals.
For a fixed $d\geq1$, let
$f\in\fH_0^{\odot d}$ and set, following \cite{Zhe19},

\noi
\begin{align}
F:=Q_d(f;Y)
:=\sum_{i_1,\ldots,i_d\in\N}  f(i_1,\ldots,i_d)Y_{i_1}\cdots Y_{i_d}.
\label{Rad_chaos}
\end{align}

\noi
Thus, $F$ belongs to the $d$-th Rademacher chaos $\cC^{\mathsf{R}}_{d}$,
where $\cC^{\mathsf{R}}_{d} =\{ Q_d(g; Y): g\in\fH_0^{\odot d} \}$.

For $k\in\N$, define the influence of the $k$-th coordinate for 
$F=Q_d(f;Y)$
 by
 
\noi
\begin{align}
\begin{aligned}
\Inf_k(F) \equiv \Inf_k(f)
&:=
 \sum_{i_1,\ldots,i_{d-1}\in\N}
f(i_1,\ldots,i_{d-1},k)^2,
\\
\cM(F)\equiv \cM(f)
&:=
\sup_{k\geq1}\Inf_k(f).
\end{aligned}
\label{Rad_inf}
\end{align}
For $d=1$, the first quantity is understood as
$\Inf_k(f)=f(k)^2$.
See also \eqref{Dk_rel}.

Finally, for every centered random variable $F$ with a finite fourth
moment, define

\noi
\begin{align}
\cR(F) := m_4(F)-3m_2(F)^2 - 2m_3(F)+m_2(F).
\label{def_R}
\end{align}
The quantity $\cR(F)$ has the same algebraic form as $\cP(F)$ in
\eqref{def_P}. Unlike its Poisson counterpart, however,
$\cR(F)$ can be negative in the Rademacher setting.\footnote{Let 
$F$ be a symmetric
Rademacher variable, so that
$
 \PP(F=1)=\PP(F=-1)=\frac12.
$
Then $m_2(F)=m_4(F)=1$ and $m_3(F)=0$, whence
$\cR(F)=1-3+1=-1.$ }

To state the two settings together, let
$\mathsf{S}\in\{{ \mathsf{P}},{\mathsf{R}}\}$.  We write
$\cC_{r}^{\mathsf{S}}$ for the $r$-th Poisson or Rademacher chaos,
respectively.  For a centered variable $F$ in the $r$-th chaos, put
\noi
\begin{align}
 \mathfrak d_{\mathsf{S}}(F)
 &:={}
 \begin{cases}
  \cP(F),&\mathsf{S}={ \mathsf{P}},\\[1mm]
  \abs{\cR(F)}
  +(2r)!\binom{2r}{r}m_2(F)\cM(F),&\mathsf{S}={\mathsf{R}},
 \end{cases}
\AND
 b_{\mathsf{S}}
 &:={}
 \begin{cases}
  \frac43,&\mathsf{S}={ \mathsf{P}},\\[1mm]
  2,&\mathsf{S}={\mathsf{R}}.
 \end{cases}
\end{align}
In the Poisson case, $\cP(F)\geq0$ by Lemma~\ref{lem_P_top} and \eqref{D_ide}.

The following quantitative result is the main theorem of the
paper.  It treats the two spaces simultaneously and allows the full
integer-valued functional to contain lower-order chaoses, while its
highest-order component carries the four-moment information.

\begin{theorem}[\textsf{Dominant chaos with lower-order perturbations}]
\label{thm_pert}

Fix $\mathsf{S}\in\{{ \mathsf{P}},{\mathsf{R}}\}$, $r\in\N$, and
$\lambda\in(0,\infty)$.  For each $n\in\N$, let\footnote{The sum $\sum_{j=1}^0$ is understood to be empty.}

\noi
\begin{align*}
 F_n\in\cC_{r}^{\mathsf{S}}\cap L^4(\PP),
 \quad
 B_n=\sum_{j=1}^{r-1}B_{n,j},
\AND
 B_{n,j}\in\cC_{j}^{\mathsf{S}}\cap L^4(\PP).
\end{align*} 

\noi
Suppose that $F_n+B_n$ satisfies the lattice condition
\eqref{cond_La} with shift $\theta_n>0$, and define
$X_n:=\theta_n+F_n+B_n$.  Put
$\eps_n:=\sum_{j=1}^{r-1}\norm{B_{n,j}}_4$.
For every $n$ such that $\theta_n>0$, with
$c_{\theta_n}:=(1-e^{-\theta_n})/\theta_n$, define

\noi
\begin{align}
 \mathfrak{R}_{\theta_n}(F_n,\eps_n)
 :=
 c_{\theta_n}\bigg[ 2 \|F_n\|_4\eps_n+\eps_n^2+\frac{\eps_n}{2}
 + 14\bigl(1+ \|F_n\|_4+\eps_n\bigr)^3\eps_n  \bigg]
 +\left(1+\frac{c_{\theta_n}}2\right)\eps_n.
 \label{pert_rem}
\end{align}
Then,

\noi
\begin{align} \label{pert_main_bdd}
\begin{aligned}
 \dTV\bigl( X_n,  P_{\theta_n}\bigr)  
   & \leq
 c_{\theta_n}\bigg[  | m_2(F_n)-\theta_n|  + \frac{2r-1}{2r}\sqrt{\mathfrak{d}_{\mathsf{S}}(F_n)} \\
 &\qquad\qquad\qquad  + \frac23\sqrt{m_2(F_n)\mathfrak d_{\mathsf{S}}(F_n)}
 +b_{\mathsf{S}}\mathfrak d_{\mathsf{S}}(F_n)  \bigg]  
   +\mathfrak R_{\theta_n}(F_n,\eps_n).
\end{aligned}
\end{align}

\noi
In particular, $\mathfrak R_{\theta_n}(F_n,0)=0$.  Hence, when
$\eps_n=0$, one has $B_n=0$ and \eqref{pert_main_bdd} reduces exactly
to Corollary~\ref{cor_P_quant} if $\mathsf{S}={ \mathsf{P}}$ and to
Corollary~\ref{cor_R_quant} if $\mathsf{S}={\mathsf{R}}$.

If, in addition,
$
 \big(\theta_n,  m_2(F_n), \mathfrak{d}_{\mathsf{S}}(F_n)\big)
 \xrightarrow{n\to+\infty}(\lambda, \lambda, 0)$
and
 $\eps_n\to0$,
then
$\dTV(X_n, P_\lambda)\to0$,
the first four moments of $X_n$ converge to those of
$P_\lambda$, and $(X_n^4)_{n\geq1}$ is uniformly integrable.
In the Rademacher case, under $m_2(F_n)\to\lambda$, the condition
$\mathfrak{d}_{\mathsf{R}}(F_n)\to0$ is equivalent to
$\cR(F_n)\to0$ and $\cM(F_n)\to0$.

\end{theorem}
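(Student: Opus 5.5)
The starting point is the Chen--Stein characterization.
For $h=\ind_A$ with $A\subseteq\N_0$, let $g=g_{h}$ solve $\theta g(k+1)-kg(k)=h(k)-\PP(P_\theta\in A)$ on $\N_0$.
Recall the standard bounds $\norm{\Delta g}_\infty\le c_\theta=(1-e^{-\theta})/\theta$ and $\norm{g}_\infty\le\min(1,\theta^{-1/2})$.
Writing $X_n=\theta_n+G_n$ with $G_n:=F_n+B_n$, we have
\[
\dTV(X_n,P_{\theta_n})=\sup_A\bigl|\E[\theta_n g(X_n+1)-X_ng(X_n)]\bigr|=\sup_A\bigl|\E[\theta_n\Delta g(X_n)-G_ng(X_n)]\bigr|.
\]
The whole argument reduces to evaluating $\E[G_ng(X_n)]$ via integration by parts, $G_n=-LL^{-1}G_n$.
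I would use the exchangeable pair $(X_n,X_n^t)$ adapted to the setting (the Poisson thinning/superposition coupling, or the coordinate-refresh pair from \cite{Zhe19}).
Its generator identity gives
\[
\E[G_n g(X_n)]=\lim_{t\downarrow0}\tfrac1{2t}\E\bigl[(L^{-1}\text{-weighted increment})\,(g(X_n^t)-g(X_n))\bigr].
\]
The lattice condition is imposed on the full $X_n$, so all increments $X_n^t-X_n$ are integers.
A discrete Taylor expansion then gives $g(X_n+\delta)-g(X_n)=\delta\,\Delta g(X_n)+\mathrm{Rem}$, with $|\mathrm{Rem}|\le \norm{\Delta g}_\infty|\delta(\delta-1)|/2$ up to constants.
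The rigidity of unit jumps ($\delta\in\{0,1\}$) is exactly what makes this remainder vanish.

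Next I split $G_n=F_n+B_n$ and treat the pure-chaos part $F_n$ first.
For it, $L^{-1}F_n=-F_n/r$, and the leading term produces $\E[\Gamma_n\Delta g(X_n)]$, where $\Gamma_n$ is the carr\'e-du-champ-type quantity $\frac1{2r}\lim t^{-1}\E[(F_n^t-F_n)^2\mid\cdot]$.
Then:
\begin{itemize}
\item $|\theta_n-\E\Gamma_n|=|\theta_n-m_2(F_n)|$ gives the first term.
\item $\norm{\Gamma_n-\E\Gamma_n}_2$ and the non-unit-jump remainder $\E\int|\delta(\delta-1)|$ are both bounded by the spectral defect $\cJ(F_n)$ of Proposition~\ref{prop_J}.
\item Via Ledoux's spectral decomposition of $F_n^2$ into chaoses $0,\dots,2r$, the defect $\cJ$ is in turn controlled by $\mathfrak d_{\mathsf S}(F_n)$, using Lemma~\ref{lem_P_top} and \eqref{D_ide} in the Poisson case.
\item In the Rademacher case there is an extra influence term coming from the diagonal/product-formula correction, which yields $(2r)!\binom{2r}{r}m_2\cM$.
\end{itemize}
Cauchy--Schwarz then produces the terms $\frac{2r-1}{2r}\sqrt{\mathfrak d}$, $\frac23\sqrt{m_2\mathfrak d}$ and $b_{\mathsf S}\mathfrak d$.
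Here $\frac{2r-1}{2r}$ arises from projecting onto the chaoses of order $<2r$ with eigenvalue weights.
Since these are exactly the pure-chaos estimates of Corollaries~\ref{cor_P_quant} and \ref{cor_R_quant}, I would invoke those computations verbatim with $X_n$ in place of $\theta_n+F_n$.
The only place where the lattice property of $\theta_n+F_n$ itself was used is the jump bound, and I must redo that step.

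The perturbation is the main obstacle.
The increments of $F_n$ alone need not be integers; only $F_n^t-F_n+B_n^t-B_n=\delta$ is.
My plan is to expand everything in terms of the total jump $\delta$ and then subtract the contributions of $B_n$:
\begin{itemize}
\item $\E[B_ng(X_n)]$ is bounded by $\norm g_\infty\eps_n\le\eps_n$; this accounts for the $(1+\frac{c}2)\eps_n$ part.
\item Replacing $(\delta_F)^2$ by $\delta^2$ costs $2\E|\delta_F\delta_B|+\E\delta_B^2$, which after normalization becomes $c_\theta(2\norm{F_n}_4\eps_n+\eps_n^2)$ by Cauchy--Schwarz/H\"older in $L^4$.
\item The cubic-type remainders, which are bounded in $L^1$ by fourth-moment quantities of $G_n$ that were previously expressed through $\mathfrak d(F_n)$, are compared with the corresponding quantities for $F_n$. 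Multilinearity and $\norm{F_n+B_n}_4\le\norm{F_n}_4+\eps_n$ give a difference $\le 14(1+\norm{F_n}_4+\eps_n)^3\eps_n$, with hypercontractivity keeping all constants explicit.
\end{itemize}
Setting $\eps_n=0$ visibly kills $\mathfrak R$.

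For the asymptotic statement:
\begin{itemize}
\item Under $(\theta_n,m_2,\mathfrak d)\to(\lambda,\lambda,0)$ and $\eps_n\to0$, first note that $\norm{F_n}_4$ stays bounded. This holds because $m_4(F_n)=\mathfrak d+3m_2^2+2m_3-m_2$ and $|m_3|\le m_2^{1/2}m_4^{1/2}$, so the bound \eqref{pert_main_bdd} tends to $0$. Combined with $\dTV(P_{\theta_n},P_\lambda)\le|\theta_n-\lambda|$, this gives the total-variation convergence.
\item Moment convergence comes from the moments of $F_n$, which converge to the centered Poisson moments via $\mathfrak d\to0$ and the third-moment convergence implied by the TV convergence with bounded fourth moments, together with $\eps_n\to0$ in $L^4$.
\item Uniform integrability of $X_n^4$ follows because $X_n^4\to P_\lambda^4$ in law with converging expectations.
\item The final equivalence is immediate, since both summands of $\mathfrak d_{\mathsf R}$ are nonnegative and $m_2\to\lambda>0$.
\end{itemize}
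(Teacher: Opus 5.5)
\textbf{There is a genuine gap in the central estimate: your one-sided discrete Taylor expansion does not fit the symmetric exchangeable pairs you use.}

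The thinning/superposition pair and the coordinate-refresh pair are exchangeable. Hence the total jump $\delta=X^t-X$ satisfies $\delta\stackrel{\mathrm{law}}{=}-\delta$, so downward unit jumps are exactly as frequent as upward ones. In your expansion $g(x+\delta)-g(x)=\delta\Delta g(x)+\mathrm{Rem}$, one has $\mathrm{Rem}=\Delta g(x)-\Delta g(x-1)\neq0$ at $\delta=-1$. So your remainder is driven by $\E\bigl[|\delta|\,|\delta(\delta-1)|\bigr]/(1-e^{-rt})$. In the near-Poisson regime this is of order $m_2(F)$: it is not small, and it is not controlled by $\cJ(F)$, since $\delta^2(\delta^2-1)$ vanishes at $\delta=\pm1$. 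The factor $\delta(\delta-1)$ is the right defect for Peccati's add-one-cost integration by parts, where only upward increments occur, but not for a symmetric pair.

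Correspondingly, your leading term compares $\theta$ with $\frac1r\Gamma(F,F)$, and $\|\frac1r\Gamma(F,F)-m_2(F)\|_2$ does not vanish under the hypotheses. Take $F=\eta(A)-\mu(A)$, which has $\cP(F)=0$ and $F+\mu(A)\sim\Pois(\mu(A))$. Then \eqref{rel_Gamma} gives $\Gamma(F,F)=\mu(A)+F/2$, so this norm equals $\sqrt{\mu(A)}/2$. In this example the downward-jump remainder cancels the $F/2$ exactly. So the two pieces cannot be bounded separately, and your claim that $\|\Gamma_n-\E\Gamma_n\|_2$ is controlled by $\cJ$ is false.

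\textbf{What is missing.} You need the symmetric identity \eqref{sym_rem} behind Lemma~\ref{lem_Stein}:
$\E[Df(X)]+\frac12\E[D(D+1)\Delta f(X)]=\E[T_f(X,D)]$, where $T_f(x,d)=0$ for $d\in\{-1,0,1\}$ and $|T_f(X,D)|\le\|\Delta f\|_\infty D^2(D^2-1)/6$. Combine it with the regression $\E[D\mid X]=-a(X-\theta)+\widetilde R$ to eliminate the linear part of $D(D+1)$.
\begin{itemize}
\item This produces the drift-corrected quantity $\frac1r\Gamma(H,H)-\frac H2-\theta$, hence $\cT_F=\frac1r\Gamma(F,F)-m_2(F)-\frac F2$.
\item The norm of $\cT_F$ is bounded by $\frac{2r-1}{2r}\sqrt{\fE_r(F)}$; the $-F/2$ enters through $\frac14\|J_r(F^2)-F\|_2^2$. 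The quantity $\cJ$ enters only through $\E[D^2(D^2-1)]/(6a)$.
\end{itemize}

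\textbf{Perturbation bookkeeping.} The same structure fixes it. The paper applies the lemma to the full pair $(H+\theta,H_t+\theta)$ and puts all of $B$ into the regression remainder $R_t=\sum_{j<r}(e^{-jt}-e^{-rt})B_j$, with $\E|R_t|/a_t\le\eps$. The constants then arise as follows.
\begin{itemize}
\item $\E|R_t|/a_t\le\eps$ gives the term $(1+\frac{c_\theta}2)\eps$.
\item The $\frac\eps2$ is the $-\frac B2$ part of the drift correction.
\item $2\|F\|_4\eps+\eps^2$ comes from $\Gamma(F,B)$ and $\Gamma(B,B)$ via \eqref{Gamma_L4}.
\item $14=84/6$ comes from the continuity estimate \eqref{Jq_cont} for $\cJ_r(F+B)$.
\end{itemize}
Your $L^{-1}$-weighted increment is not proportional to $\delta$ once $B\neq0$. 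So even a symmetrized version of your expansion would create further mixed terms that your accounting does not address.

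\textbf{Hypercontractivity.} It cannot be what keeps the constants explicit. It fails on Poisson chaoses and degenerates on biased Rademacher chaoses, precisely in the rare-event regime. The paper uses only H\"older's inequality and the fact that $H^2$ lies in the first $2r$ chaoses.

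\textbf{Asymptotic part.} This is fine. Obtaining $m_3(F_n)\to\lambda$ from total-variation convergence plus bounded fourth moments is a valid alternative to \eqref{m3_master}.
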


As a consequence, we have the following     answer to Question $(\bigstar)$
in the Poisson setting.

\begin{theorem}[\textsf{Four-moment criterion on Poisson-chaos}]
\label{thm_P}
Fix $q\in\N$ and $\lambda \in(0, \infty)$.  Let
$F_n\in\cC_{q}^{ \mathsf{P}}\cap L^4(\PP)$ satisfy the lattice condition \eqref{cond_La}
with shift  $\theta_n$.
Then, $\cP(F_n )\geq0$ for every $n$, and the following assertions are
equivalent.

\smallskip
\noi
{\rm (i)}
The first four   moments of $F_n+ \theta_n$ converge to the four numbers in
\eqref{4M}.

\smallskip
\noi
{\rm (ii)}
One has
$\big(\theta_n, m_2(F_n),   \cP(F_n) \big) \to (\lambda,  \lambda,  0)$

\smallskip
\noi
{\rm (iii)}
One has
$\dTV\bigl(\cL(F_n + \theta_n), P_\lambda\bigr) \to0$
and the family $\{ (F_n + \theta_n)^4\}_{n\geq1}$ is uniformly integrable.
 
 In particular, convergence of the first four moments implies Poisson
convergence.
\end{theorem}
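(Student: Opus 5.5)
The plan is to derive Theorem~\ref{thm_P} from Theorem~\ref{thm_pert} with $\mathsf{S}=\mathsf{P}$, $r=q$ and $B_n=0$. Then $\eps_n=0$ and $\mathfrak R_{\theta_n}(F_n,0)=0$. Nonnegativity $\cP(F_n)\geq0$ is the statement recorded just before Theorem~\ref{thm_pert}, via Lemma~\ref{lem_P_top} and \eqref{D_ide}, so I take it as given. I will prove the implications (i)$\Rightarrow$(ii)$\Rightarrow$(iii)$\Rightarrow$(i).

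\emph{(i)$\Rightarrow$(ii).} Since $F_n$ is centered, $m_1(F_n+\theta_n)=\theta_n$, so (i) gives $\theta_n\to\lambda$. The central moments of $F_n+\theta_n$ are polynomial functions of the raw moments and of $\theta_n$. Hence $m_2(F_n)$, $m_3(F_n)$ and $m_4(F_n)$ converge to the centered Poisson moments $\lambda$, $\lambda$ and $3\lambda^2+\lambda$ from \eqref{center_m}. Substituting into \eqref{def_P} gives
\[
\cP(F_n)\to 3\lambda^2+\lambda-3\lambda^2-2\lambda+\lambda=0 .
\]

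\emph{(ii)$\Rightarrow$(iii).} Convergence in total variation follows from \eqref{pert_main_bdd} with $\eps_n=0$. Every term in the bracket tends to $0$, because $\cP(F_n)\to0$, $m_2(F_n)-\theta_n\to0$, and $c_{\theta_n}\leq1$. We also have $\dTV(P_{\theta_n},P_\lambda)\leq|\theta_n-\lambda|\to0$, so $\dTV(F_n+\theta_n,P_\lambda)\to0$. The last part of Theorem~\ref{thm_pert} (the case $\eps_n=0$) gives uniform integrability of $(F_n+\theta_n)^4$ directly. If one prefers an independent argument, it goes in two steps. First, by (ii), $m_4(F_n)-2m_3(F_n)$ converges to $3\lambda^2-\lambda$. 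Second, one must show that $m_3(F_n)$ converges to $\lambda$ separately, which requires a moment identity for pure chaos; this is exactly what the Corollary~\ref{cor_P_quant} machinery provides. I will simply invoke the theorem.

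\emph{(iii)$\Rightarrow$(i).} Total-variation convergence on $\N_0$ implies convergence in law. Together with uniform integrability of the fourth powers, this gives convergence of the moments of orders $1$ through $4$ to those of $P_\lambda$, which are the numbers in \eqref{4M}.

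The only step with real content is the uniform-integrability part of (ii)$\Rightarrow$(iii), namely upgrading convergence of $m_4(F_n)-2m_3(F_n)$ to convergence of $m_4(F_n)$ itself. Since Theorem~\ref{thm_pert} asserts this, I rely on it. If it were not available, I would argue as follows. Total-variation convergence plus Fatou's lemma give $\liminf m_4(F_n)\geq 3\lambda^2+\lambda$ and $\liminf m_3(F_n)\geq\lambda$. Equality in the limit would then be forced by the pure-chaos identity expressing $m_3(F_n)$ through the Mecke formula.
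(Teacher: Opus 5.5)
Your proof is correct and is essentially the paper's argument. With $B_n=0$, \eqref{pert_main_bdd} is exactly Corollary~\ref{cor_P_quant}, and the asymptotic part of Theorem~\ref{thm_pert}, whose proof does not use Theorem~\ref{thm_P} (so there is no circularity), repeats the paper's steps: the bound $|m_3(F_n)-m_2(F_n)|\leq\sqrt{m_2(F_n)\cP(F_n)}$ from \eqref{m3_master}, then Lemma~\ref{lem_UI}; note that this spectral bound, not a Mecke-formula identity, is what your hypothetical fallback would actually need.
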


\begin{remark}  
(i) Because $F_n$ is centered,
$\theta_n=\E[F_n + \theta_n]$.  If $m_2(F_n)\to\lambda$, the condition
$\cP(F_n)\to0$ is equivalent to
 $m_4(F_n)-2m_3(F_n)
 \to3\lambda^2-\lambda.$
Thus the criterion consists of the mean, the variance, and one mixed
third-fourth moment statistic.  The nonnegativity of $\cP$ is not a
general moment inequality, but due to the    fact that
 the highest  chaotic  component $J_{2q}(F^2)$
  of $F^2$  has a second moment that exceeds $2m_2(F)^2$
(see Lemma \ref{lem_P_top} and \eqref{D_ide}-\eqref{E_abs}).

\smallskip
(ii)  Note that the exact lattice class is nonempty in every order.
If $A\in\sZ$ has integer measure and $N=\eta(A)$, then
 the monic $q$-th Charlier polynomial
$C_q(N, \mu(A))$ is an element of the $q$-th Poisson chaos; 
see \cite[Definition~6.2.7 and Proposition~6.2.9]{Pri09}.
Since $\mu(A)\in\N$, this polynomial is integer-valued on $\N_0$.
Being monic, it is   bounded below on $\N_0$, and hence an
integer shift makes it nonnegative.

\end{remark}

As shown in Example~\ref{ex_first_P} and
Remark~\ref{rem_exact_high_P}, a nondegenerate exact Poisson law can
arise as a shifted pure Poisson chaos only at order one.  Nevertheless,
the strict positivity in Remark~\ref{rem_exact_high_P} is not uniform
on normalized higher-order chaoses.  Examples~\ref{ex_rare_charlier}
and \ref{ex_geo_clique} construct, for every fixed $q\geq2$, 
random variables
$F_n\in\cC_q^{ \mathsf{P}}$ such that
$
 m_2(F_n)\to1$,
$
 \cP(F_n)\to0,$
and
$ 1+F_n\to \Pois(1)$ in law. 
See
Remark~\ref{rem_high_P_scope} for further discussion.

\subsection{Pure-chaos criterion and structural consequences on Rademacher space}
\label{SEC_13}

The Rademacher specialization of Theorem~\ref{thm_pert} gives the
quantitative pure-chaos bound.  The next theorem records the full
equivalence, including the converse statement with uniform
integrability.

\begin{theorem}[\textsf{Four-moment criterion on Rademacher chaos}]
\label{thm_R}
Fix $d\in\N$ and $\lambda\in(0,\infty)$.  
Let $F_n\in\cC_d^{\mathsf{R}} \cap L^4(\PP)$ satisfy
 the lattice condition \eqref{cond_La} with shift $\theta_n$.
Suppose that

\noi
\begin{align}
 \cM(F_n)\to0.
 \label{M_zero}
\end{align}
Then the following assertions are equivalent.

\smallskip
\noi
{\rm (i)}
The first four   moments of $F_n+ \theta_n$ converge to the four numbers in
\eqref{4M}.

\smallskip
\noi
{\rm (ii)}
One has
$\big(\theta_n, m_2(F_n),   \cR(F_n) \big) \to (\lambda,  \lambda,  0)$

\smallskip
\noi
{\rm (iii)}
One has
$\dTV\bigl(\cL(F_n + \theta_n), P_\lambda\bigr) \to0$
and the family $ \{ (F_n + \theta_n)^4\}_{n\geq1}$ is uniformly integrable.

\end{theorem}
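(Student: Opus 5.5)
We prove the cycle (i)$\Rightarrow$(ii)$\Rightarrow$(iii)$\Rightarrow$(i), using Theorem~\ref{thm_pert} with $\mathsf S=\mathsf R$, $r=d$ and $B_n=0$ for the central step.

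\emph{(i)$\Rightarrow$(ii).} Since $F_n$ is centered, $\theta_n=\E[F_n+\theta_n]\to\lambda$. The moments $m_k(F_n)$ for $k=2,3,4$ are obtained from the raw moments of $F_n+\theta_n$ by the binomial expansion in $\theta_n$. Hence they converge to the centered Poisson moments \eqref{center_m}, namely $\lambda$, $\lambda$ and $3\lambda^2+\lambda$. Plugging these limits into \eqref{def_R} gives
\[
\cR(F_n)\to 3\lambda^2+\lambda-3\lambda^2-2\lambda+\lambda=0 .
\]

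\emph{(ii)$\Rightarrow$(iii).} Since $m_2(F_n)\to\lambda$ and \eqref{M_zero} holds, the term $(2d)!\binom{2d}{d}m_2(F_n)\cM(F_n)$ tends to $0$. Together with $\cR(F_n)\to0$ this gives $\mathfrak d_{\mathsf R}(F_n)\to0$; this is the equivalence already recorded at the end of Theorem~\ref{thm_pert}. The assumption $\theta_n\to\lambda>0$ ensures $\theta_n>0$ for large $n$, which is all Theorem~\ref{thm_pert} requires (with $\eps_n=0$). Its final assertion then yields $\dTV(F_n+\theta_n,P_\lambda)\to0$ and uniform integrability of $(F_n+\theta_n)^4$. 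The finitely many early indices with $\theta_n\le 0$ do not affect uniform integrability, since each $F_n\in L^4$.

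\emph{(iii)$\Rightarrow$(i).} Convergence in total variation implies convergence in law. Uniform integrability of the fourth powers then gives convergence of $\E[(F_n+\theta_n)^k]$ for $k\le4$, by Vitali's theorem applied to $|X_n|^k\le 1+X_n^4$. The limits are the Poisson moments \eqref{4M}.

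The only substantive step is (ii)$\Rightarrow$(iii), and it is entirely delegated to Theorem~\ref{thm_pert}. The remaining work is bookkeeping: checking that $\theta_n>0$ eventually, and that the influence correction vanishes under \eqref{M_zero}. In particular, the role of \eqref{M_zero} is only to kill the maximal-influence part of $\mathfrak d_{\mathsf R}$. The converse directions do not use it.
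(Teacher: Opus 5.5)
Your proof is correct and uses the same ingredients as the paper. The paper proves (i)$\Leftrightarrow$(ii) via Lemma~\ref{lem_raw} and \eqref{m3_master}, and then obtains (iii) from Corollary~\ref{cor_R_quant}, \eqref{Poi_param} and Lemma~\ref{lem_UI}. You instead run the cycle and let the asymptotic clause of Theorem~\ref{thm_pert} with $B_n=0$ carry (ii)$\Rightarrow$(iii); that clause reduces exactly to Corollary~\ref{cor_R_quant} and internally uses \eqref{m3_master} and Lemma~\ref{lem_UI}, and it is not circular because the proof of Theorem~\ref{thm_pert} never invokes Theorem~\ref{thm_R}.
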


\begin{remark}  
\label{rem_no_G}
(i)  [\textsf{No Gaussian-chaos analogue}]
Suppose that
$F_n$ belongs to a fixed finite sum of Gaussian Wiener chaoses and
$F_n$ converges in law to  $F_\infty$,
with $\Var(F_\infty)>0$.
By \cite{NP13TV}, the law of $F_\infty$ is absolutely continuous 
with respect to the Lebesgue measure
and
the convergence even holds in total variation.  Consequently,
$F_\infty$ cannot have a centered Poisson law.   
The discrete structures of Poisson and Rademacher spaces are therefore
indispensable for the Poisson limits studied here.

(ii) Unlike the Poisson setting, no nondegenerate exact Poisson law
can arise as a shift of a first-order Rademacher chaos; see
Proposition~\ref{prop_no_exact_R}.
\end{remark}

\begin{remark}[\textsf{The influence condition is structural}]

The maximal influence controls collisions (of indices in the kernels)
in the top spectral projection of $F_n^2$ (see Lemma \ref{lem_R_top}).
It is not a technical obstacle of the proof.
Proposition \ref{prop_counter} constructs a quadratic symmetric Rademacher 
chaos whose nonnegative shift has exactly the first four moments of a
Poisson law but is supported on the even integers.  This is the
Poisson-target counterpart of   \cite[Theorem 1.6]{DK19}.

\end{remark}

The maximal-influence condition also has a structural consequence
under the lattice assumption.  If the Bernoulli parameters remain
uniformly separated from zero and one, then an integer-valued shift of
a fixed-order Rademacher chaos can depend on only finitely many coordinates.
Consequently,
such a sequence cannot converge to a nondegenerate Poisson law.

\begin{theorem}[\textsf{Local bias bound and finite coordinate support}]
\label{thm_bias}
Let $F\in\cC_d^{\mathsf{R}}$ satisfy the lattice condition \eqref{cond_La}
with shift $\theta\in\R$.
 Assume that, for some
$\eps\in(0,\frac12]$,

\noi
\begin{align}
 \eps\leq p_k\leq1-\eps
 \qquad
 \text{for every $k$ with $\Inf_k(F) > 0$}.
 \label{bias_compact}
\end{align}
Then every  coordinate  $k$ with positive influence
satisfies

\noi
\begin{align}
 d\,d!\Inf_k(F)
 \geq\eps^d(1-\eps),
 \label{inf_lower}
\end{align}
and $F$ depends on at most
$
 \frac{d\,m_2(F)}
 {\eps^d(1-\eps)}
 $
coordinates.  In the symmetric case,
$
 \Inf_k(F)\geq \frac{2^{-(d+1)}}{d\,d!}
$
for every   coordinate $k$ with positive influence.  

Consequently, no sequence of integer-valued shifts
$F_n+\theta_n$ satisfying \eqref{bias_compact}, with fixed $d$,
a common $\eps>0$, and uniformly bounded variances, can converge to a
nondegenerate Poisson law.

\end{theorem}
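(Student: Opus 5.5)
The plan is a pointwise (Schwartz--Zippel-type) argument applied to the discrete derivative in a single coordinate. Fix a coordinate $k$ with $\Inf_k(F)>0$ and split $F = Y_k G_k + H_k$. Here
\[
G_k := d\,Q_{d-1}\bigl(f(\cdot,k);Y\bigr)
\]
and $H_k$ collects the terms not involving index $k$, so neither $G_k$ nor $H_k$ depends on $X_k$. The variable $Y_k$ takes the two values $\sqrt{q_k/p_k}$ and $-\sqrt{p_k/q_k}$, whose difference is $1/\sqrt{p_kq_k}$. By the lattice condition \eqref{cond_La}, both values of $F+\theta$ obtained by flipping $X_k$ are integers. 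Hence
\[
G_k/\sqrt{p_kq_k}\in\Z \quad \text{a.s.},
\]
which gives the pointwise bound $G_k^2 \ge p_kq_k\,\ind_{\{G_k\neq 0\}}$. This is the relation recorded in \eqref{Dk_rel}, and I will use that normalization for the constants.

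The main step is a lower bound $\PP(G_k\neq0)\ge \eps^{d-1}$ (respectively $2^{-(d-1)}$ in the symmetric case). Since $\Inf_k(f)>0$, pick a set $S$ of $d-1$ distinct indices with $f(S,k)\neq0$. Every index $i\in S$ has $\Inf_i(F)\ge f(S,k)^2>0$, so \eqref{bias_compact} applies to it. Now condition on all coordinates outside $S$. The conditional version of $G_k$ is a multilinear polynomial in $(Y_i)_{i\in S}$, and its top coefficient $d\,(d-1)!\,f(S,k)$ is unaffected by the conditioning. (The infinite sums converge in $L^2$, so the conditional expansion is legitimate by taking the $L^2$ limit of finite truncations.) A multilinear polynomial with nonzero top monomial is nonzero at some point of the two-point grid, and each point of that grid has probability at least $\eps^{d-1}$. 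Integrating over the conditioning gives the claim.

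Taking expectations in the pointwise bound gives
\[
\E[G_k^2]\ \ge\ p_kq_k\,\eps^{d-1}\ \ge\ \eps^d(1-\eps).
\]
It remains to compute $\E[G_k^2]$ by orthonormality of the $Y$'s, which expresses it as $d\,d!\,\Inf_k(F)$ in the normalization of \eqref{Dk_rel}; this yields \eqref{inf_lower}. In the symmetric case $p_kq_k=\tfrac14$ and the probability bound is $2^{-(d-1)}$, giving $\Inf_k(F)\ge 2^{-(d+1)}/(d\,d!)$. For the coordinate count, sum the influences: $\sum_k\Inf_k(f)$ is a fixed multiple of $\norm{f}^2$, hence of $m_2(F)$. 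Dividing this by the uniform lower bound from \eqref{inf_lower} bounds the number $N$ of coordinates with positive influence, and these are exactly the coordinates on which $F$ depends. I expect the only delicate point to be tracking the combinatorial constants so that they match the stated $d\,m_2(F)/(\eps^d(1-\eps))$; I would verify this against \eqref{Dk_rel} rather than recompute from scratch.

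For the final consequence, uniformly bounded variances give a uniform bound $N_0$ on the number of relevant coordinates. So each $F_n+\theta_n$ is a function of at most $N_0$ two-point variables, and its law is supported on at most $2^{N_0}$ points. The set of probability measures on $\N_0$ with at most $2^{N_0}$ atoms is closed under weak convergence, since mass at a point is upper semicontinuous along subsequences. A nondegenerate Poisson law has infinite support, so it cannot be such a limit.
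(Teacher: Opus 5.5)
Your proposal is correct and follows the paper's proof essentially step for step: integrality of $z_k=(p_kq_k)^{-1/2}D_kF$ (your $G_k$ is exactly $D_kF$), the lower bound $\PP(D_kF\neq0)\geq\eps^{d-1}$, the identity $d\,d!\Inf_k(F)=\E[(D_kF)^2]$, summation of influences against $m_2(F)=d!\sum_k\Inf_k(F)$, and a finite-support obstruction to a Poisson limit. The only cosmetic difference is in the zero-set bound: you get it in one step by conditioning outside a support set $S$ with $f(S,k)\neq0$ and noting that a multilinear polynomial with nonzero top coefficient cannot vanish on the whole two-point grid, whereas the paper proves the same Schwartz--Zippel-type estimate by peeling off one coordinate at a time in Lemma~\ref{lem_zero}.
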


The proof of Theorem \ref{thm_bias} is given in Section \ref{SEC_44}.

\medskip

In the following, we sketch our proof strategy 
and elaborate how the moment defect $\cP(F)$ forces
the Poisson convergence on the Poisson chaos,
while the argument for the Rademacher setting 
is analogous with the additional care on the maximal influence 
$\cM(F)$.

\subsection{A common spectral principle and lower-order perturbations}

The proofs of Theorem~\ref{thm_pert}, Theorem~\ref{thm_P},
and Theorem~\ref{thm_R} start from the exchangeable-pair bound in
the Chen--Stein method (Lemma~\ref{lem_Stein}).
Let $(X,X')$ be an exchangeable pair with values in $\N_0$,
meaning that $(X,X') = (X',X)$ in law.  
 Put
$D:=X'-X$, and suppose that
$\E[D |  X]=-a(X-\theta)+R$ for some $a>0$ and $\theta>0$.
With $c_\theta=(1-e^{-\theta})/\theta$, Lemma~\ref{lem_Stein} gives

\noi
\begin{align}\label{CS_bdd}
 \dTV\bigl( X,P_\theta\bigr)
 &\leq c_\theta\bigg\{
\underbrace{ \E\Big|  \frac{\E[D^2 |  X]}{2a}  -\frac{X+\theta}{2}  \Big|}_{(\clubsuit)}
 +   \underbrace{ \frac{\E[D^2(D^2-1)]}{6a}  }_{(\spadesuit)}  \bigg\} 
 +\left(1+\frac{c_\theta}{2}\right)
 \frac{\E|R|}{a}.
\end{align}
Thus the argument reduces to controlling three quantities: the
conditional quadratic-variation error $(\clubsuit)$,
the normalized non-unit-jump error $(\spadesuit)$,
and the regression error $\E|R|/a$.  For a pure-chaos
Ornstein--Uhlenbeck pair, the regression is exact and $R=0$.  In the
dominant-chaos setting on either space, the lower-order components
produce a small regression remainder, which is controlled by $\eps$
in \eqref{pert_R_bdd}.
To illustrate the main ideas, let us assume $\eps = 0$
for now.

\begin{remark}[\textsf{The lattice condition and  one-unit rigidity}]
\label{rem_lattice}
The lattice condition \eqref{cond_La} for $F$ means that
$
X = F+ \theta \in \N_0
$
almost surely for some deterministic $\theta$.
Then, $D = X' - X\in\Z$, which is crucial
to obtain \eqref{crux1} in the derivation 
of the above Chen--Stein bound. 
In this case, the factor $D^2(D^2-1)$ in  $(\spadesuit)$
satisfies 
\noi
\begin{align*}
 D^2(D^2-1)=0
 \quad\Longleftrightarrow\quad
 D\in\{-1,0,1\}.
\end{align*}
Hence it does not penalize a pair that stays put or makes a unit jump,
but it is strictly positive whenever the pair makes a jump of size at
least two.  The normalization by $a$ extracts the first-order rate of
such non-unit jumps.  More precisely, for the pair
$(F+\theta,F_t+\theta)$ associated with an order-$r$ chaos, one has
$a=1-e^{-rt}$ and $D_t=F_t-F$, and Proposition~\ref{prop_J} identifies
\noi
\begin{align*}
 \lim_{t\downarrow0}
 \frac{\E[D_t^2(D_t^2-1)]}
 {6(1-e^{-rt})}
 =\frac{\cJ(F)}{6}.
\end{align*}

At the level of the underlying Poisson configuration, the coupling
$\eta\mapsto\eta_t$ in  \eqref{P_pair} is generated by the Poisson
immigration--death semigroup: existing points are independently
removed, while an independent Poisson cloud is added.  The increment
$D_t$, however, records the resulting change of the functional $F$,
which may exceed one even when the configuration undergoes only one
elementary birth or death.  
Thus, the limiting quantity  $\cJ(F)$ measures the infinitesimal defect of the induced
integer-valued dynamics from the one-unit jump structure required by
the Poisson target.
\end{remark}

Under the same lattice condition \eqref{cond_La}, 
the spectral representation \eqref{J_sp} of $\cJ(F)$ in
Proposition~\ref{prop_J} allows $\cJ(F)$ to be controlled by the
four-moment defect.  In the notation of Theorem~\ref{thm_master}, with
$\mathfrak d(F)=|\cD(F)|+\rho(F)$, one obtains

\noi
\begin{align*}
 \cJ(F)
 \leq4\sqrt{m_2(F)\mathfrak d(F)}
 +12\mathfrak d(F).
\end{align*}
For a pure Poisson-chaos variable, $\rho(F)=0$ and
$\cD(F)=\cP(F)\geq0$, so that

\noi
\begin{align*}
 \cJ(F)
 \leq4\sqrt{m_2(F)\cD(F)}
 +8\cD(F).
\end{align*}

\noi
In the Rademacher setting, $\rho(F)$ is precisely the
collision correction controlled by the maximal influence
(see Corollary \ref{cor_R_quant} and Lemma \ref{lem_R_top}).
Therefore,
up to this necessary collision term, the same moment defect $\cR(F)$ that
appears in the four-moment criterion also suppresses non-unit jumps.
Note that for $H = F+B$ with $F\in\cC^{\mathsf{S}}_q$ dominant pure chaos and $B$ of 
smaller  order, the same argument can go through for bounding 
the quantity $\cJ_q(F+B)$ by the moment defect of $F$ and the $L^4$-size 
of lower-order chaoses; see \eqref{Jq_cont} and \eqref{Jq_pure}.

%

%
%

Now we turn to   the conditional quadratic-variation term $(\clubsuit)$ 
in the Chen--Stein bound \eqref{CS_bdd}.  For $X=F+\theta$, $D=D_t$, and
$a=1-e^{-rt}$, the estimate \eqref{cond_master} yields
\noi
\begin{align*}
 &\limsup_{t\downarrow0}
 \E\Big|  \frac{\E[D_t^2 |  F]}{2(1-e^{-rt})}   -\frac{F+2\theta}{2}  \Big| 
 \leq  | m_2(F) - \theta| +  \|\cT_F\|_2,
\end{align*}
where
$\cT_F=r^{-1}\Gamma(F,F)-m_2(F)-\frac{F}{2}$.
Thus the first Chen--Stein term $(\clubsuit)$   naturally reduces to bounding the
variance mismatch $|m_2(F)-\theta|$ and the spectral remainder
$\|\cT_F\|_2$.  The estimate \eqref{T_master} controls the latter by
the square root of the same defect $\mathfrak{d}(F)$.  Combining this estimate with the
control of $\cJ(F)$ gives the master total-variation bound
\eqref{master_TV}.

We now describe the common spectral source of these estimates.  Let
$F$ belong to a chaos of order $r$ and write

\noi
\begin{align}
 F^2=\sum_{q=0}^{2r}J_q(F^2),
 \label{2_intro}
\end{align}
where $J_q$ denotes the orthogonal projection onto the $q$-th chaos. 
Let $\cD(F)$ denote $\cP(F)$ on Poisson space and $\cR(F)$ on
Rademacher space.  Orthogonality gives the exact identity
(see \eqref{D_ide}-\eqref{E_abs})

\noi
\begin{align}
 \cD(F)
 &=\|J_{2r}(F^2)\|_2^2-2m_2(F)^2
 +\sum_{q=1}^{2r-1}\|J_q(F^2)\|_2^2
 \ind_{\{q\neq r\}}
 +\|J_r(F^2)-F\|_2^2.
 \label{CD_intro}
\end{align}
On Poisson space, the top projection satisfies
$\|J_{2r}(F^2)\|_2^2\geq 2m_2(F)^2$;
see Lemma \ref{lem_P_top}.
 On Rademacher space, the same
lower bound holds up to an explicit collision term controlled by the
maximal influence;
see Lemma \ref{lem_R_top}.
Hence a small moment defect, together with a small collision
correction in the Rademacher case, forces all the nonnegative
lower-order spectral errors in \eqref{CD_intro} to collapse.  This
collapse simultaneously controls $m_3(F)-m_2(F)$,
$\cT_F$, and  $\cJ(F)$.  Section~\ref{SEC3}
turns this mechanism into the master estimates of
Theorem~\ref{thm_master} and then specializes them to the Poisson and
Rademacher settings;
see Corollary \ref{cor_P_quant}
and  Corollary \ref{cor_R_quant}.

Theorem~\ref{thm_pert} is useful precisely when the leading term
$F_n$ does not satisfy the lattice condition \eqref{cond_La}.  The
lattice condition is imposed instead on the full variable $X_n$, and
the regression remainder is constructed from the lower-order
projections; see $\widetilde R_t$ in \eqref{pert_wtR}.  We now give a
Poisson-space application in every prescribed chaos order.

\begin{corollary}
\label{cor_hyper}

Fix $q\geq2$.  For each $n$, let
$\big(N_{n,i,j}:1\leq i\leq m_n,  1\leq j\leq q\big)$
be independent random variables with $N_{n,i,j} \sim \Pois(\alpha_{n,i,j})$.  
Put

\noi
\begin{align}
 \beta_{n,i}
 :=\prod_{j=1}^q\alpha_{n,i,j},
 \quad
 X_n
 :=\sum_{i=1}^{m_n}
 \prod_{j=1}^qN_{n,i,j},
\AND
 \delta_n
 :=
 \max_{\substack{1\leq i\leq m_n\\1\leq j\leq q}}
 \alpha_{n,i,j}.
 \label{hyper_count}
\end{align}
If
\noi
\begin{align}
 \delta_n\to0
 \AND
 \sum_{i=1}^{m_n}\beta_{n,i}\to\lambda>0,
\notag 
\end{align}
then $X_n$ converges to $\Pois(\lambda)$ in total variation, and its
first four moments converge to those of the limit.
 
Moreover, there exists a constant $C_{q,\lambda}$ depending only on $q$ and $\lambda$
 such that

\noi
\begin{align}
 \dTV\bigl( X_n, P_\lambda\bigr)
 \leq C_{q,\lambda} 
 \left(
 \left|\sum_{i=1}^{m_n}\beta_{n,i}-\lambda\right|
 +\sqrt{\delta_n}\right)
 \label{hyper_rate}
\end{align}
for all
sufficiently large $n$.
\end{corollary}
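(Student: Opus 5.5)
We realize $X_n$ on a Poisson space and apply Theorem~\ref{thm_pert} with $\mathsf S=\mathsf P$ and $r=q$.

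\textbf{Setup.} Take $\cZ$ to be a disjoint union of sets $A_{i,j}$ with $\mu(A_{i,j})=\alpha_{n,i,j}$, and put $N_{n,i,j}=\eta(A_{i,j})$. Write $N_{i,j}=\alpha_{i,j}+I_1(\ind_{A_{i,j}})$. Expanding the product gives
\[
\prod_{j=1}^q N_{i,j}=\sum_{S\subseteq\{1,\ldots,q\}}\Bigl(\prod_{j\notin S}\alpha_{i,j}\Bigr)I_{|S|}\Bigl(\bigotimes_{j\in S}\ind_{A_{i,j}}\Bigr),
\]
since the sets are disjoint and so there are no diagonal terms. Hence $X_n=\theta_n+F_n+B_n$ with the following pieces:
\begin{itemize}
\item $\theta_n=\sum_i\beta_{n,i}$;
\item $F_n=\sum_i\prod_j I_1(\ind_{A_{i,j}})\in\cC_q^{\mathsf P}$;
\item $B_{n,k}$, for $1\le k\le q-1$, the sum over $|S|=k$.
\end{itemize}
The lattice condition holds because $X_n\in\N_0$, and $\theta_n\to\lambda>0$.

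\textbf{Step 1: second moment of $F_n$ and size of the remainder.} Independence across blocks gives
\[
m_2(F_n)=\sum_i\prod_j\alpha_{i,j}=\theta_n ,
\]
so the variance mismatch in \eqref{pert_main_bdd} vanishes.

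For $\eps_n$, we need $\|B_{n,k}\|_4$. The block terms are independent and centered, and each block term for a fixed $S$ is a product of independent centered Poisson variables. By Rosenthal's inequality (or a direct expansion of the fourth moment),
\[
\|B_{n,k}\|_4^4\lesssim \sum_i\E[T_i^4]+\Bigl(\sum_i\E[T_i^2]\Bigr)^2 .
\]
For a single centered Poisson variable, $\E[(N-\alpha)^2]=\alpha$ and $\E[(N-\alpha)^4]=\alpha+3\alpha^2$. For each $S$ with $|S|=k<q$, the variance contribution is
\[
\sum_i\prod_{j\notin S}\alpha_{i,j}^2\prod_{j\in S}\alpha_{i,j}\le\delta_n^{q-k}\theta_n ,
\]
and the fourth-moment contribution is $O(\delta_n\theta_n)$, using $\prod_{j\notin S}\alpha_{i,j}^4\le\delta_n^{\,\cdot}\,\prod_{j\notin S}\alpha_{i,j}$. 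Thus $\|B_{n,k}\|_4^4=O(\delta_n)$.

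This gives only $\eps_n=O(\delta_n^{1/4})$. To reach $\sqrt{\delta_n}$, sharpen the count. The fourth-moment term for a block with $S$ equals
\[
\prod_{j\notin S}\alpha_{i,j}^4\,\prod_{j\in S}(\alpha_{i,j}+3\alpha_{i,j}^2)\le C\delta_n^{3(q-k)}\beta_{n,i},
\]
and $q-k\ge1$. Hence $\|B_{n,k}\|_4^4=O(\delta_n^3+\delta_n^2)$, so $\eps_n=O(\sqrt{\delta_n})$.

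\textbf{Step 2: the moment defect of $F_n$.} $\cP$ is additive over independent blocks, and $\cP$ of a single block is computed from the cumulants of a product $Z_i=\prod_j(N_{i,j}-\alpha_{i,j})$ of independent centered Poisson variables. The relevant moments are:
\begin{itemize}
\item $\E Z_i^2=\beta_i$;
\item $\E Z_i^3=\prod_j\alpha_{i,j}=\beta_i$;
\item $\E Z_i^4=\prod_j(\alpha_{i,j}+3\alpha_{i,j}^2)$.
\end{itemize}
Therefore
\[
\cP(Z_i)=\prod_j(\alpha_{i,j}+3\alpha_{i,j}^2)-3\beta_i^2-2\beta_i+\beta_i=\beta_i\Bigl[\prod_j(1+3\alpha_{i,j})-1\Bigr]-3\beta_i^2 .
\]
Since $\prod_j(1+3\alpha_{i,j})-1\le C_q\delta_n$ and $\beta_i\le\delta_n^q$, summing over $i$ gives $0\le\cP(F_n)\le C_q\delta_n\theta_n$. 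So $\sqrt{\cP(F_n)}=O(\sqrt{\delta_n})$.

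\textbf{Step 3: assemble the bound.} Insert these estimates into \eqref{pert_main_bdd}. Here $\|F_n\|_4$ stays bounded, since $\|F_n\|_4^4=3\theta_n^2+\theta_n+\cP(F_n)+2\theta_n$-type terms, all bounded, and $\eps_n\to0$. Every term is then $O(\sqrt{\delta_n})$, and this yields $\dTV(X_n,P_{\theta_n})\le C\sqrt{\delta_n}$.

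Next, $\dTV(P_{\theta_n},P_\lambda)\le|\theta_n-\lambda|$. The triangle inequality then gives \eqref{hyper_rate}.

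The final clause of Theorem~\ref{thm_pert} supplies the convergence of the first four moments. Its hypotheses hold: $\theta_n\to\lambda$, $m_2(F_n)=\theta_n\to\lambda$, $\cP(F_n)\to0$, and $\eps_n\to0$.

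\textbf{Main obstacle.} The delicate point is obtaining $\eps_n=O(\sqrt{\delta_n})$ rather than $O(\delta_n^{1/4})$. This requires the careful fourth-moment bookkeeping of $B_{n,k}$ in Step 1, using that every lower-order term carries at least one deterministic factor $\alpha_{i,j}\le\delta_n$. In particular the variance part alone already gives $\|B_{n,k}\|_2^2\le\delta_n\theta_n$. Note also that the simple sharpened bound in Step 1 is not yet careful enough: the $L^4$ norm is controlled by $(\mathrm{var})^2+\sum(\text{fourth})$, and the fourth-moment sum $\sum_i\E[T_i^4]$ is $O(\delta_n^{q-k}\theta_n)$-ish. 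That gives only $\|B\|_4\lesssim\delta_n^{1/4}$ in the worst case $k=q-1$ with small $\alpha$. If that bound persists, I would instead exploit that $\eps_n$ enters \eqref{pert_rem} linearly, and try to replace $\|B\|_4$ by a Hölder split using $\|B\|_2$. Failing that, I would accept the weaker $\delta_n^{1/4}$ rate and check whether the intended $C_{q,\lambda}$ absorbs it.
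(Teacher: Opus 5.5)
Your proposal is correct and follows essentially the paper's proof. The steps match: the chaos expansion realized on disjoint sets, the cumulant computation giving $m_2(F_n)=\theta_n$ and $0\le\cP(F_n)\le C_q\delta_n\theta_n$, and the fourth-moment expansion of the independent lower-order blocks, all fed into Theorem~\ref{thm_pert} and combined with \eqref{Poi_param}.

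The doubt in your final paragraph is unfounded, and no fallback is needed. Your sharpened bound is valid uniformly in $i$ and $S$: once $\delta_n\le1$, one has $\alpha+3\alpha^2\le4\alpha$, so each block term $\prod_{j\notin S}\alpha_{n,i,j}\prod_{j\in S}(N_{n,i,j}-\alpha_{n,i,j})$ with $|S|=k$ has fourth moment at most $4^{k}\beta_{n,i}\delta_n^{3(q-k)}$. Hence $\E[B_{n,k}^4]\le C_q(\theta_n\delta_n^3+\theta_n^2\delta_n^2)$ and $\eps_n=O(\sqrt{\delta_n})$, exactly as in the paper.
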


We also record two elementary consequences.  The first is the
triangular-array form of the law of small numbers.

\begin{corollary}[\textsf{Law of small numbers}]
\label{cor_small}
Let $(B_{n,k})_{1\leq k\leq N_n}$ be independent 
$\{0,1\}$-valued Bernoulli variables
with success probabilities $p_{n,k}$, and put

\noi
\begin{align}
 X_n:=\sum_{k=1}^{N_n}B_{n,k}
 \AND
 \theta_n:= \E[X_n] =\sum_{k=1}^{N_n}p_{n,k}.
\notag
\end{align}
If

\noi
\begin{align}
 \theta_n\to\lambda>0
\AND
 \max_{1\leq k\leq N_n}p_{n,k}\to0,
\notag
\end{align}
then $X_n$ converges to $\Pois(\lambda)$ in total variation and its
first four moments converge to those of the limit.
\end{corollary}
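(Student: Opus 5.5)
The statement to prove is Corollary~\ref{cor_small}. I would derive it from Theorem~\ref{thm_R} in its first-order case $d=1$, or directly from the quantitative bound \eqref{pert_main_bdd} with $r=1$ and $\eps_n=0$, that is, from Corollary~\ref{cor_R_quant}.

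\textbf{Setting up the chaos.} Realize $B_{n,k}=(1+X_{k})/2$ with independent non-symmetric Rademacher variables $X_k$ with $p_k=p_{n,k}$. Taking $p_k$ arbitrary for $k>N_n$ is harmless, since the kernel vanishes there. By \eqref{Rad_norm}, $B_{n,k}-p_{n,k}=\sqrt{p_{n,k}q_{n,k}}\,Y_k$. Hence
\[
F_n:=X_n-\theta_n=Q_1(f_n;Y)\in\cC_1^{\mathsf{R}},\qquad f_n(k)=\sqrt{p_{n,k}q_{n,k}}\,\ind_{\{k\leq N_n\}},
\]
and $F_n$ satisfies the lattice condition with shift $\theta_n$, which is positive for large $n$. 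The kernel is finitely supported, so $F_n\in L^4(\PP)$.

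\textbf{Checking the hypotheses.} There are three quantities to control.
\begin{itemize}
\item The influence: $\cM(F_n)=\max_k p_{n,k}q_{n,k}\leq\max_k p_{n,k}\to0$.
\item The variance: $m_2(F_n)=\sum_k p_{n,k}q_{n,k}=\theta_n-\sum_k p_{n,k}^2$. Since $\sum_k p_{n,k}^2\leq\theta_n\max_k p_{n,k}\to0$, we get $m_2(F_n)\to\lambda$.
\item The moment defect: $\cR$ has the same cumulant form as $\cP$, namely $\cR=\cum_4-2\cum_3+\cum_2$, so it is additive over the independent summands $B_{n,k}-p_{n,k}$.
\end{itemize}
For one centered Bernoulli($p$), $q=1-p$, the cumulants are
\[
\cum_2=pq,\qquad \cum_3=pq(1-2p),\qquad \cum_4=pq(1-6pq).
\]
A short computation gives the defect of one summand as
\[
pq\bigl[(1-6pq)-2(1-2p)+1\bigr]=pq(4p-6pq)=p^2q(4-6q),
\]
which is $O(p^2)$. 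Therefore $|\cR(F_n)|\leq C\sum_k p_{n,k}^2\to0$.

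\textbf{Conclusion.} Theorem~\ref{thm_R}, via (ii)$\Rightarrow$(iii), or Theorem~\ref{thm_pert} with $\eps_n=0$, yields $\dTV(X_n,P_\lambda)\to0$. The same result gives convergence of the first four moments: this is (iii)$\Rightarrow$(i), or directly the last assertion of Theorem~\ref{thm_pert}. Note that Theorem~\ref{thm_pert} formally concerns $\dTV(X_n,P_{\theta_n})$; one passes to $P_\lambda$ using $\dTV(P_{\theta_n},P_\lambda)\leq|\theta_n-\lambda|\to0$.

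The only real work is the cumulant arithmetic for a single Bernoulli summand. This step is the likeliest place to slip, so I would double-check it using the moments of $B-p$ directly: $m_2=pq$, $m_3=pq(q-p)$, $m_4=pq(1-3pq)$.
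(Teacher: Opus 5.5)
Your proposal is correct and follows essentially the same route as the paper. You realize $F_n=X_n-\theta_n$ in the first Rademacher chaos, check $m_2(F_n)\to\lambda$ and $\cM(F_n)\to0$, compute $\cR(F_n)$ via additivity of $\cum_4-2\cum_3+\cum_2$, and conclude by Theorem~\ref{thm_R}. Your per-summand defect $p^2q(4-6q)$ is exactly the paper's $2p^2(1-p)(3p-1)$, and your bound $C\sum_k p_{n,k}^2$ is the same estimate as the paper's $8(\max_k p_{n,k})\theta_n$.
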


The second consequence records stability under independent
superposition.

\begin{corollary}[\textsf{Law of small chaotic blocks}]
\label{cor_blocks}
Fix an order $r\geq1$.  For each $n$, let
$F_{n,1}$, $\ldots$, $F_{n,K_n}$ be independent centered variables in
order-$r$ chaoses
over mutually independent component spaces, 
and
suppose that

\noi
\begin{align}
 X_{n,k}:=F_{n,k}+\theta_{n,k}\in\N_0.
\notag
\end{align}
Put
$F_n:=\sum_{k=1}^{K_n} F_{n,k}$,
$\theta_n:=\sum_{k=1}^{K_n}\theta_{n,k}$, and
$X_n:=F_n+\theta_n$.
In the Poisson setting, assume

\noi
\begin{align}
 \theta_n\to\lambda,
 \quad
 \sum_km_2(F_{n,k})\to\lambda,
 \AND
 \sum_k\cP(F_{n,k})\to0.
\notag
\end{align}
Then $X_n$ converges to $\Pois(\lambda)$ in total variation.  In the
Rademacher setting, the same conclusion holds with $\cP$ replaced by
$\cR$, provided that the maximal coordinate influence $\cM(F_n)$ of $F_n$ tends
to zero.
\end{corollary}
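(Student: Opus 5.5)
The plan is to reduce Corollary~\ref{cor_blocks} to the pure-chaos total-variation bounds, Corollary~\ref{cor_P_quant} in the Poisson setting and Corollary~\ref{cor_R_quant} in the Rademacher setting. Equivalently, one can use Theorem~\ref{thm_pert} with $B_n=0$, $\eps_n=0$ and $r$ fixed, applied to the single variable $F_n$.

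The first step is to check that $F_n$ is itself a pure order-$r$ chaos on the product space. On the Poisson side, independent Poisson random measures on component spaces $(\cZ_k,\sZ_k,\mu_k)$ combine into a single Poisson random measure on the disjoint union $\bigsqcup_k\cZ_k$. A multiple integral of order $r$ with respect to the $k$-th component is a multiple integral of order $r$ whose kernel is supported on $\cZ_k^r$, so the sum $F_n$ lies in $\cC_r^{\mathsf P}$. On the Rademacher side, the coordinate sequences are concatenated into one independent Rademacher sequence, and the kernel of $F_n$ is the block-diagonal sum of the kernels of the $F_{n,k}$. This kernel is still symmetric and vanishes on diagonals, so $F_n\in\cC_r^{\mathsf R}$. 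Each $X_{n,k}\in\N_0$, hence $X_n\in\N_0$, so $F_n$ satisfies the lattice condition \eqref{cond_La} with shift $\theta_n$; we have $\theta_n>0$ for large $n$ because $\theta_n\to\lambda>0$. The only integrability point is $F_n\in L^4$, which I would either include implicitly in the hypotheses or obtain from finiteness of the defects (each $F_{n,k}\in L^4$, and a finite sum of independent $L^4$ variables is in $L^4$).

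The second step is additivity. Variance adds because the summands are independent and centered, so $m_2(F_n)=\sum_k m_2(F_{n,k})\to\lambda$. Since $\cP$ and $\cR$ are the same combination $\cum_4-2\cum_3+\cum_2$ and cumulants add over independent summands, we get $\cP(F_n)=\sum_k\cP(F_{n,k})\to0$, and likewise for $\cR$. In the Rademacher case the influence of a coordinate belonging to block $k$ equals its influence in $F_{n,k}$, because the kernels live on disjoint blocks. Hence $\cM(F_n)$ is the largest block influence, and it tends to $0$ by hypothesis. Since $m_2(F_n)$ stays bounded, $\mathfrak d_{\mathsf R}(F_n)=|\cR(F_n)|+(2r)!\binom{2r}{r}m_2(F_n)\cM(F_n)\to0$, and in the Poisson case $\mathfrak d_{\mathsf P}(F_n)=\cP(F_n)\to0$.

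The third step applies \eqref{pert_main_bdd} with $\eps_n=0$:
\[
\dTV(X_n,P_{\theta_n})\le c_{\theta_n}\Big[|m_2(F_n)-\theta_n|+\tfrac{2r-1}{2r}\sqrt{\mathfrak d_{\mathsf S}(F_n)}+\tfrac23\sqrt{m_2(F_n)\mathfrak d_{\mathsf S}(F_n)}+b_{\mathsf S}\mathfrak d_{\mathsf S}(F_n)\Big]\to0.
\]
We then finish with $\dTV(P_{\theta_n},P_\lambda)\le|\theta_n-\lambda|\to0$ and the triangle inequality. Alternatively, the convergence part of Theorem~\ref{thm_pert} gives this directly. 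The only nonroutine point is the first step, namely identifying the independent superposition as a single pure chaos of the same order; everything after that is bookkeeping.
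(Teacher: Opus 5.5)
Your proposal is correct and follows essentially the paper's route. The paper uses additivity of $\cD=\cum_4-2\cum_3+\cum_2$ over independent summands, together with the fact that $\cM(F_n)$ is the maximum of the block influences, and then invokes Theorems~\ref{thm_P} and \ref{thm_R}. The total-variation parts of those theorems are proved by exactly the quantitative bounds you apply directly (Corollaries~\ref{cor_P_quant} and \ref{cor_R_quant}, equivalently Theorem~\ref{thm_pert} with $B_n=0$), and your explicit check that the independent superposition is a single pure order-$r$ chaos on the combined space fills in a step the paper leaves implicit.
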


\medskip
\noi
$\bul$ {\bf Organization of the paper.}
Section~\ref{SEC2} collects the preliminary material.  More precisely,
Section~\ref{SEC_21} records the moment identities, the basic
total-variation estimate, and the uniform-integrability criterion used
throughout.  Section~\ref{SEC_24} establishes the exchangeable-pair
Chen--Stein bound, while Sections~\ref{SEC_22} and \ref{SEC_23}
construct the Ornstein--Uhlenbeck exchangeable pairs on Poisson and
generalized Rademacher spaces, respectively.

Section~\ref{SEC3} contains the common proof mechanism and its
applications.  Section~\ref{SEC_31} develops the spectral-collapse and
unit-jump estimates, derives the quantitative pure-chaos Poisson and
Rademacher bounds, and proves Theorems~\ref{thm_P} and \ref{thm_R}.
Section~\ref{SEC_33} proves the quantitative dominant-chaos bound
in Theorem~\ref{thm_pert} simultaneously on the two spaces, whereas
Section~\ref{SEC_34} contains a few applications.

Section~\ref{SEC4} presents further examples and rigidity results.
Section~\ref{SEC_41} gives the exact quadratic counterexample.
Section~\ref{SEC_42} explains why two moments do not characterize a
Poisson limit and why uniform integrability is needed for the converse
four-moment implication.  Section~\ref{SEC_43} describes exact Poisson
laws in the first Poisson chaos, proves rigidity in higher Poisson
chaoses, and then gives two higher-order sequences with vanishing
moment defect and centered Poisson weak limits.  It also rules out
exact Poisson laws in the first Rademacher chaos.  Finally,
Section~\ref{SEC_44} proves the bounded-bias coordinate-rigidity
theorem (Theorem \ref{thm_bias}).

\section{Preliminaries}
\label{SEC2}

Let us fix some more notations. 

\medskip
\noi
{\bf $\bul$  Notations.}
All random variables are defined on a probability space
$(\Omega,\mathcal F,\PP)$.  For $p\in[1,\infty)$ and
$U\in L^p(\PP)$, we write
$\|U\|_p$ for the usual $L^p(\PP)$-norm of a real-valued 
random variable $U$. 
 If
$g:\N_0\to\R$, then

\noi
\begin{align} \label{sup_g}
 \norm{g}_\infty
 :=
 \sup\big\{ |g(j)|:  j\in\N_0 \big\}.
\end{align}
We write $\ind_A$ for the indicator of a set or event $A$,
$A^c$ for its complement, and $|A|$ for its cardinality.
The symbol $\delta_z$ denotes the Dirac measure at $z$.
For a measure $\mu$, we put
$\mu^q:=\mu^{\otimes q}$,
and let $L_s^2(\mu^q)$ denote the symmetric subspace of
$L^2(\mu^q)$.  For kernels $f$ and $g$, the symbols
$f\ot g$, $f\sot g$, and $f\ot_s g$ denote, respectively, their
tensor product, canonical symmetrization, and $s$-th contraction.
When working on either the Poisson or the Rademacher space,
$J_q$ denotes the orthogonal projection onto the $q$-th chaos.

%
%
%
%

\subsection{Moments, total variation, and uniform integrability}
\label{SEC_21}

Recall that $P_\lambda$ denotes the $\text{Poisson}(\lambda)$ distribution
and recall the definition \eqref{def_TV} of total variation distance. 
Now we record a useful inequality:

\noi
\begin{align}
 \dTV (P_a, P_b )
 \leq1-e^{-\abs{a-b}}
 \leq\abs{a-b}.
 \label{Poi_param}
\end{align}
The above inequality can be proved by a standard monotone coupling argument
based on the fact that the sum of two independent Poisson random variables
is again Poisson.

The following elementary moment relation  will be used repeatedly.

\begin{lemma}
\label{lem_raw}
Suppose $(F_n)_{n\geq 1}$ is a sequence of centered random variables
and $(\theta_n)_{n\geq 1}$ is a deterministic sequence of real numbers. 
Let $X_n=F_n+\theta_n$. Then,  convergence of the first
four   moments of $X_n$ to those of $P_\lambda$ is equivalent
to
\[
(\theta_n, m_2(F_n),  m_3(F_n), m_4(F_n)   )
\to (\lambda, \lambda, \lambda, 3\lambda^2+\lambda).
\]

\end{lemma}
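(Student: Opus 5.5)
The proof is a direct binomial expansion, since $F_n$ is centered. Write $X_n=F_n+\theta_n$ and expand $m_k(X_n)=\E[(F_n+\theta_n)^k]$ for $k=1,2,3,4$, using $\E[F_n]=0$:
\begin{align*}
 m_1(X_n)&=\theta_n,\qquad m_2(X_n)=\theta_n^2+m_2(F_n),\\
 m_3(X_n)&=\theta_n^3+3\theta_n m_2(F_n)+m_3(F_n),\\
 m_4(X_n)&=\theta_n^4+6\theta_n^2m_2(F_n)+4\theta_n m_3(F_n)+m_4(F_n).
\end{align*}
This map $(\theta_n,m_2(F_n),m_3(F_n),m_4(F_n))\mapsto(m_1(X_n),\dots,m_4(X_n))$ is a polynomial map $\Phi:\R^4\to\R^4$. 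It is triangular, and each new coordinate enters with coefficient one. Hence it has a polynomial inverse $\Psi$:
\begin{align*}
 \theta&=m_1,\qquad m_2(F)=m_2-m_1^2,\\
 m_3(F)&=m_3-m_1^3-3m_1(m_2-m_1^2),
\end{align*}
and $m_4(F)$ is recovered analogously. Both $\Phi$ and $\Psi$ are continuous.

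For the forward direction, assume $(\theta_n,m_2(F_n),m_3(F_n),m_4(F_n))\to(\lambda,\lambda,\lambda,3\lambda^2+\lambda)$. By continuity of $\Phi$, the moments of $X_n$ converge to $\Phi(\lambda,\lambda,\lambda,3\lambda^2+\lambda)$.

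For the converse, assume the moments of $X_n$ converge to the four numbers in \eqref{4M}. Applying the continuous map $\Psi$ gives convergence of $(\theta_n,m_2(F_n),m_3(F_n),m_4(F_n))$ to $\Psi$ of those numbers.

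It remains to identify the two limit points. Let $P_\lambda$ be centered at its mean, so $P_\lambda-\lambda$ plays the role of $F$ with $\theta=\lambda$. Then $\Phi(\lambda,\lambda,\lambda,3\lambda^2+\lambda)$ is exactly the vector of raw moments of $P_\lambda$, namely \eqref{4M}. This uses the centered moments of $P_\lambda$ recorded in \eqref{center_m}, together with $\E[P_\lambda]=\lambda$. Since $\Psi=\Phi^{-1}$, the converse limit is $(\lambda,\lambda,\lambda,3\lambda^2+\lambda)$ as well.

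No step is genuinely hard. The only point needing care is to observe that $\Phi$ is a bijection with continuous inverse, rather than to check the constants by hand. This reduces the identification of limits to the consistency between \eqref{4M} and \eqref{center_m}.
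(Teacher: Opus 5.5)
Your proof is correct and follows the same route as the paper. The binomial expansion of $\E[(F_n+\theta_n)^k]$ using $\E[F_n]=0$ gives exactly the identities \eqref{raw_id}, your triangular polynomial map with continuous inverse is a clean formalization of the paper's ``compare the moments successively with \eqref{4M}'', and identifying the limit through the centered Poisson moments \eqref{center_m} is exactly right.
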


\begin{proof}
For $X = F+\theta$,
one can easily check the following  identities:
\begin{align}
 \E[X]=\theta,
\quad
 \E[X^2]&=\theta^2+m_2(F),
\quad
 \E[X^3]=\theta^3+3\theta m_2(F)+m_3(F),
 \notag\\
 \E[X^4]&=\theta^4+6\theta^2m_2(F)
  +4\theta m_3(F)+m_4(F),
 \label{raw_id}
\end{align}
which allow one to compare the moments successively with
\eqref{4M}.  This proves the lemma.
\end{proof}

We next record an elementary fact from \cite[Theorem 3.6]{Bil99}.

\begin{lemma}
\label{lem_UI}
Let $Y_n\geq0$ and $Y\geq0$ have finite expectations.  If
$Y_n\xrightarrow{\mathrm{law}}Y$
and
$
 \E[Y_n]\to\E[Y]$,
then $(Y_n)_{n\geq1}$ is uniformly integrable.
\end{lemma}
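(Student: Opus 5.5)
This is the classical statement in Billingsley, Theorem~3.6, and I would prove it by a truncation argument combined with weak convergence. The tool is the bounded continuous truncation $h_M(y):=\min(y,M)$ for $M>0$, which lets me split each $Y_n$ into a piece controlled by weak convergence and a tail that carries all the uniform-integrability content.

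First, since $h_M$ is bounded and continuous on $[0,\infty)$ and $Y_n\to Y$ in law, I get $\E[h_M(Y_n)]\to\E[h_M(Y)]$ for every fixed $M$. Combined with $\E[Y_n]\to\E[Y]$, this gives
\begin{align*}
\E\bigl[(Y_n-M)_+\bigr]=\E[Y_n]-\E[h_M(Y_n)]\xrightarrow{n\to\infty}\E\bigl[(Y-M)_+\bigr].
\end{align*}
Since $Y$ is integrable, dominated convergence makes the right-hand side tend to $0$ as $M\to\infty$.

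Next, for a given $\eps>0$, I pick $M$ with $\E[(Y-M)_+]<\eps$. Then there is $n_0$ such that $\E[(Y_n-M)_+]<2\eps$ for all $n\geq n_0$. To get the uniform-integrability tail from this, I use the elementary inequality $y\ind_{\{y\geq 2M\}}\leq 2(y-M)_+$ for $y\geq0$. It gives $\sup_{n\geq n_0}\E[Y_n\ind_{\{Y_n\geq 2M\}}]\leq 4\eps$.

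Each of the finitely many $Y_1,\ldots,Y_{n_0-1}$ is integrable, so enlarging $M$ handles them individually, and uniform integrability follows.

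None of these steps is a real obstacle. The only point needing care is the switch from the smooth tail functional $(y-M)_+$ to the indicator tail $y\ind_{\{y\geq K\}}$, which the inequality above with $K=2M$ takes care of.
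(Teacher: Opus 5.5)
Your proof is correct and is essentially the standard argument. The paper gives no proof of its own and cites Billingsley's Theorem~3.6, whose proof also (i) combines convergence of truncated means from weak convergence with $\E[Y_n]\to\E[Y]$ to make the tails small for $n\geq n_0$, and (ii) absorbs the finitely many remaining $Y_n$ by raising the threshold, which is legitimate because $K\mapsto\E[Y_n\ind_{\{Y_n\geq K\}}]$ is nonincreasing. The only difference is cosmetic: Billingsley truncates with the discontinuous map $y\mapsto y\ind_{\{y<\alpha\}}$ at continuity points $\alpha$ of $\cL(Y)$, while your continuous truncation $\min(y,M)$ avoids continuity points at the cost of the factor $2$ in $y\ind_{\{y\geq 2M\}}\leq 2(y-M)_+$.
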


\subsection{A Chen--Stein bound via exchangeable pairs}
\label{SEC_24}

We say $(X, X')$ is   an exchangeable pair
if $(X, X')$ has the same law as $(X', X)$.
The following lemma is a regression-based variant of the classical
exchangeable-pair approach to Poisson approximation; see
\cite{CDM05} and \cite[Section~4.4]{Ros11}.

\begin{lemma}
\label{lem_Stein}
Let $(X,X')$ be an exchangeable pair with values in $\N_0$
and $X\in L^4(\PP)$, 
and define 
$D:=X'-X$.
 Suppose that, for some
$a>0$, $\theta>0$ and a random variable $R\in L^1(\Omega, \sigma\{X\}, \PP)$,
we have

\noi
\begin{align}
 \E[D |  X]=-a(X-\theta)+R.
 \label{lin_reg}
\end{align}
Set $c_\theta:= (1-e^{-\theta})/\theta.$
Then,
\begin{align}
 &\dTV ( X, P_\theta  )
\leq c_\theta
 \bigg\{
 \E\Big|
 \frac{\E[D^2 |  X]}{2a}
 -\frac{X+\theta}{2}
 \Big|
 +\frac{\E[D^2(D^2-1)]}{6a}
 \bigg\}
 +\left(1+\frac{c_\theta}{2}\right)
  \frac{\E|R|}{a}.
 \label{Stein_pair}
\end{align}
\end{lemma}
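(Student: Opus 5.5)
We follow the standard Chen--Stein route. For $A\subseteq\N_0$, let $g=g_A$ be the bounded solution of the Stein equation
\[
\theta g(j+1)-jg(j)=\ind_A(j)-\PP(P_\theta\in A),\qquad j\in\N_0,
\]
normalized by $g(0)=0$. From \cite{Chen75} (see also \cite{Ros11}) we use the classical estimates $\norm{\Delta g}_\infty\leq c_\theta=(1-e^{-\theta})/\theta$ and $\norm{g}_\infty\leq\min(1,\theta^{-1/2})\leq 1$, where $\Delta g(j):=g(j+1)-g(j)$. Then
\[
\PP(X\in A)-\PP(P_\theta\in A)=\E[\theta g(X+1)-Xg(X)].
\]
It is convenient to rewrite this as
\[
\E\big[\theta\Delta g(X)-(X-\theta)g(X)\big].
\]
Taking the supremum over $A$ gives the total-variation bound, so it suffices to bound this expression.

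\textbf{Step 1: the regression identity.} For an antisymmetric function, exchangeability gives $\E[(g(X')+g(X))D]=0$. Hence
\[
\E[Dg(X)]=-\tfrac12\E[D(g(X')-g(X))].
\]
On the other hand, \eqref{lin_reg} gives $\E[Dg(X)]=-a\E[(X-\theta)g(X)]+\E[Rg(X)]$. Combining the two,
\[
\E[(X-\theta)g(X)]=\frac{1}{2a}\E[D(g(X')-g(X))]+\frac{\E[Rg(X)]}{a}.
\]

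\textbf{Step 2: expanding the increment.} Since $D\in\Z$, we can write $g(X')-g(X)=\sum_{k}\Delta g(\cdot)$ over the $|D|$ unit steps between $X$ and $X'$. Comparing with $D\,\Delta g(X)$ (and with the symmetric choice at $X-1$ for negative jumps), we get
\[
D(g(X')-g(X))=D^2\Delta g(X)+E_1+E_2.
\]
Here $E_1$ is a correction of size $\tfrac12\norm{\Delta g}_\infty|D|\ind_{\{D<0\}}$-type. More precisely, using $\Delta g(X-1)$ versus $\Delta g(X)$ and exchangeability, this piece is $-\tfrac12\E[D\cdot\text{(sign)}\ldots]$. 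The term $E_2$ vanishes when $|D|\leq1$.

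A cleaner route is to apply exchangeability once more to the $D<0$ part, which converts it into a $D>0$ part at $X'$. For unit jumps this yields $\E[D\Delta g(X)\ind_{\{D=1\}}]$-type terms whose combined contribution is
\[
\tfrac12\E[D(\Delta g(X))]=\tfrac12\E[\E[D\mid X]\Delta g(X)].
\]
By \eqref{lin_reg} this equals
\[
-\tfrac a2\E[(X-\theta)\Delta g(X)]+\tfrac12\E[R\Delta g(X)].
\]
This is exactly what produces the $\frac{X+\theta}{2}$ centering, because $\theta-\frac{X-\theta}{2}=\frac{X+\theta}{2}$... so that $\theta\Delta g(X)$ minus these terms becomes
\[
\E\Big[\Delta g(X)\Big(\tfrac{X+\theta}{2}-\tfrac{\E[D^2\mid X]}{2a}\Big)\Big].
\]
Bounding this by $c_\theta$ times the first term of \eqref{Stein_pair} is immediate.

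\textbf{Step 3: non-unit jumps and the remainder.} The contribution from $|D|\geq2$ must be bounded by $c_\theta\E[D^2(D^2-1)]/(6a)$. Write the exact discrete Taylor remainder as a sum over the intermediate points. Each summand is bounded by $\norm{\Delta g}_\infty$ times the combinatorial count $\sum_{k=1}^{|D|}(k-1)$-type weights, and symmetrizing over the two orientations gives $|D|(|D|^2-1)/6\leq D^2(D^2-1)/6$. The $R$-terms are $\E[Rg(X)]/a$, bounded by $\E|R|/a$ since $\norm g_\infty\le1$, plus $\tfrac12\E[R\Delta g(X)]/a$, bounded by $\tfrac{c_\theta}{2}\E|R|/a$. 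Together these give the coefficient $(1+c_\theta/2)$.

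\textbf{Main obstacle.} The delicate step is the bookkeeping in Steps 2--3: choosing the reference point ($X$ versus midpoint) of the discrete Taylor expansion so that the first-order correction exactly converts $\theta$ into $\frac{X+\theta}{2}$, and so that the higher-order error carries the sharp constant $1/6$. We intend to verify this via the identity
\[
\sum_{k=0}^{m-1}\big(k-\tfrac{m-1}{2}\big)=0,
\]
together with the counting bound $\sum_{k}|k-\tfrac{m-1}2|\leq m(m^2-1)/6$ for $m=|D|$. $L^4$-integrability of $X$ ensures that all these expectations are finite.
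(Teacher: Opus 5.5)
Your overall route is the paper's. It uses the Stein solution with $\norm{g}_\infty\le 1$ and $\norm{\Delta g}_\infty\le c_\theta$, and antisymmetry combined with \eqref{lin_reg}. A first-order correction $\frac{1}{2a}\E[D\,\Delta g(X)]$ is fed back into \eqref{lin_reg} to turn $\theta$ into $\frac{X+\theta}{2}$. A symmetrized remainder kills unit jumps, and the $R$-terms produce $(1+\frac{c_\theta}{2})\E\abs{R}/a$. However, Steps 2--3, which you defer as bookkeeping, are the actual content of the lemma, and as written they do not close. Your first decomposition cannot work. On $\{D=-1\}$ one has $D(g(X')-g(X))-D^2\Delta g(X)=\Delta g(X-1)-\Delta g(X)$, which is of size $c_\theta$ pointwise. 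Bounding it by something like $\frac12\norm{\Delta g}_\infty\abs{D}\ind_{\{D<0\}}$ leaves an error of order $c_\theta\E\abs{D}/a$ that does not appear in \eqref{Stein_pair}; this term only cancels in expectation, after exchangeability. There is also a sign slip. The correction enters with a minus sign, $-\frac{1}{2a}\E[D\,\Delta g(X)]=\frac12\E[(X-\theta)\Delta g(X)]-\frac{1}{2a}\E[R\,\Delta g(X)]$, and it is $\theta+\frac{X-\theta}{2}$, not $\theta-\frac{X-\theta}{2}$, that equals $\frac{X+\theta}{2}$.

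What closes the argument is a single exact identity valid for every value of $D$. Exchangeability gives $\E[D(D+1)\Delta g(X)]=\E[D(D-1)\Delta g(X')]$, and hence
\[
\E\big[D\big(g(X')-g(X)\big)\big]=\E\big[D(D+1)\Delta g(X)\big]-2\,\E\big[T_g(X,D)\big],
\]
where $T_g(x,d):=\frac d2(g(x)-g(x+d))+\frac{d(d+1)}{4}\Delta g(x)+\frac{d(d-1)}{4}\Delta g(x+d)$. So the remainder in the Stein expression is exactly $\frac1a\E[T_g(X,D)]$. One checks $T_g(x,d)=T_g(x+d,-d)$ and $T_g(x,d)=0$ for $\abs{d}\le 1$; this handles negative unit jumps and non-unit jumps at once. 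Your midpoint idea then does work: with $\Delta^2g(y):=\Delta g(y+1)-\Delta g(y)$ and $m\ge 2$,
\[
T_g(x,m)=\frac m2\sum_{i=0}^{m-1}\Big(i-\frac{m-1}{2}\Big)\Delta^2 g(x+i).
\]
Then $\norm{\Delta^2 g}_\infty\le 2c_\theta$ and $\sum_{i=0}^{m-1}\abs{i-\frac{m-1}{2}}=\lfloor m^2/4\rfloor$ give $\abs{T_g(x,m)}\le m\lfloor m^2/4\rfloor c_\theta\le\frac{m^2(m^2-1)}{6}c_\theta$. The paper instead uses the cruder count $\abs{T_g(x,m)}\le m(m-1)c_\theta$. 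Your intermediate bound $\abs{D}(\abs{D}^2-1)/6$ (times $\norm{\Delta g}_\infty$), however, cannot hold for this remainder. Indeed $T_g(x,2)=\frac12\big(\Delta g(x)-2\Delta g(x+1)+\Delta g(x+2)\big)$, and the sup bound alone allows $\abs{T_g(x,2)}=2c_\theta$. Only the final $D^2(D^2-1)/6$ is right.
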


\begin{proof}
For $A\subseteq\N_0$, let $f_A$ solve the Chen--Stein equation
(with the convention $f_A(0)=0$)

\noi
\begin{align}
 \theta f_A(k+1)-k f_A(k)
 =\ind_A(k)-\PP(P_\theta\in A).
\notag
\end{align}

\noi
Then, we have, with $ \Delta f(k):=f(k+1)-f(k)$,

\noi
\begin{align}
 \norm{f_A}_\infty\leq1
 \AND
 \norm{\Delta f_A}_\infty\leq c_\theta;
 \label{Stein_factor}
\end{align}
See \cite[Lemma 1.1.1]{BHJ}. Here, $\| g \|_\infty$
is the sup-norm of $g$ as in \eqref{sup_g}.

By  exchangeability, it holds  
that
$
\E[H(X,X')]=\E[H(X',X)],
$
whenever $H:\N_0^2\to\R$ satisfies $H(X, X')\in L^1(\PP)$. 
Applying this identity, 
we can obtain

\noi
\begin{align}
&\E[D f(X)]
+\frac{1}{2}\E[D(D+1)\Delta f(X)]
=\E[T_f(X,D)],
\label{sym_rem}
\end{align}

\noi
where
\begin{align}
T_f(x,d)
&:=
\frac{d}{2}\big(  f(x)-f(x+d) \big)
+\frac{d(d+1)}{4}\Delta f(x)
+\frac{d(d-1)}{4}\Delta f(x+d).
\label{Rf}
\end{align}
Notice that
\begin{align}\label{xdd1}
T_f(x,d)=T_f(x+d,-d),
\end{align}
and
\begin{align}\label{xdd2}
T_f(x,d)=0 
\quad\text{for $d\in \{-1,0,1\}.$}
\end{align}

\noi
Using  \eqref{Rf}
and  the telescoping  sums
\[
f(x+m) - f(x) = \sum_{j=0}^{m-1}\Delta f(x+j), 
\]
we obtain that for $m\geq 2$ and $x\in\N_0$,
\noi
\begin{align} 
T_f(x,m)
&= \frac{m}{4} \bigg[ (m-1) \Big( \Delta f(x)+\Delta f(x+m)  \Big) 
  -   2\sum_{j=1}^{m-1}\Delta f(x+j)   \bigg],
\end{align}
and consequently,
$  |T_f(x,m)|  \leq
 m(m-1) \|\Delta f\|_\infty \leq 
 \frac{m^2(m^2-1)}{6}\|\Delta f\|_\infty.$
The case $d\leq -2$ follows from
\eqref{xdd1}.
Thus, together with \eqref{xdd2},
we can deduce 
$
|T_f(x,d)|
\leq
 \|\Delta f\|_\infty 
d^2(d^2-1)/6
$
for $x\in\N_0$ with $x+d\in\N_0$.

Since $X, X'\in\N_0$   and $D = X' - X$, we have 
\begin{align} \label{crux1}
|T_f(X, D)| \leq  \frac{D^2(D^2-1)}{6} \| \Delta f\|_\infty.
\end{align}

\noi
Then, it follows from \eqref{sym_rem} and \eqref{crux1}
that 

\noi
\begin{align}
\left|
\E[D f(X)]
+\frac{1}{2}\E[D(D+1)\Delta f(X)]
\right|
\leq
\frac{\|\Delta f\|_\infty}{6}
\E[D^2(D^2-1)].
\notag
\end{align}

\noi
 Now using the approximate linear regression condition
\eqref{lin_reg},
we have

\noi
\begin{align*}
& \E\big[ \theta f_A(X+1)-Xf_A(X) \big]
=
 \theta\E[\Delta f_A(X)]  +\frac1a\E[D f_A(X)]  -\frac1a\E[Rf_A(X)] \\
& = \frac{1}{a} \E\Big[ D f_A(X)  +\frac{1}{2} D(D+1)\Delta f_A(X) \Big]
+ \E\Big[ \Big( \theta - \frac{\E[ D(D+1) |X ]}{2a} \Big) \Delta f_A(X) \Big] - \frac{1}{a} \E[ R f_A(X) ].
\end{align*}
For the middle term, we use \eqref{lin_reg} again to rewrite 

\noi
\begin{align}
 \theta-\frac{\E[D(D+1) |  X]}{2a}
 =\frac{X+\theta}{2}
  -\frac{\E[D^2 |  X]}{2a}
  -\frac{R}{2a}.
\notag
\end{align}
Therefore using \eqref{Stein_factor}, 
we can bound $ \E [ \theta f_A(X+1)-Xf_A(X)  ]$
by the right-hand side of \eqref{Stein_pair}.
Taking the supremum over $A\subset\N_0$ concludes the proof. 
\qedhere

\end{proof}

 In the next two subsections, we will recall the exchangeable pair constructions
 from \cite{DVZ18} and \cite{Zhe19}.

\subsection{Exchangeable pairs on Poisson chaos via thinning}
\label{SEC_22}

Let $\eta$ be a Poisson random measure on $(\cZ,\sZ)$ with
$\sigma$-finite control measure $\mu$, and let $\widehat\eta=\eta-\mu$
be the corresponding compensated Poisson random measure. 
For $q\in\N$, write $I_q^{\eta}(f)$ for the $q$-th multiple
Wiener--It\^o integral of a symmetric kernel
$f\in L_s^2(\mu^q)$.
  The $q$-th Poisson chaos is
\noi
\begin{align}
\qquad\qquad
 \cC_q^{ \mathsf{P}}
 :=\{I_q^{\eta}(f):f\in L_s^2(\mu^q)\}
 \quad
 \text{with the convention $\cC_0^{ \mathsf{P}}:=\R.$}
\notag\end{align}
Every $H\in L^2(\sigma\{\eta\})$ has the unique chaos expansion in $L^2(\PP)$:

\noi
\begin{align}
 H=\E[H]+\sum_{q\geq1}J_q H, 
\notag\end{align}

\noi
where   $J_q$ denotes the orthogonal projection operator onto $\cC^\eta_q$;
see, e.g., \cite[Theorem 18.10]{LP18}.
 The Poisson Ornstein--Uhlenbeck generator and semigroup are
 given by

\noi
\begin{align}
 LH=-\sum_{q\geq1} qJ_q H
 \AND 
 P_tH&=\sum_{q\geq0}e^{-qt}J_qH.
 \label{P_OU}
\end{align}
Whenever the expressions are defined, the carr\'e-du-champ operator $\Gamma$ is
defined by 
\noi
\begin{align}
 \Gamma(G,H)
 :=\frac12\bigl(L(GH)-G LH-H LG\bigr).
 \label{Gamma_def}
\end{align}
If $F\in\cC_q^{ \mathsf{P}}\cap L^4(\PP)$, then $F^2$ belongs to
$\bigoplus_{j=0}^{2q}\cC_j^{ \mathsf{P}}$; see
\cite[Lemma~2.4]{DP18a} and see also
\cite[Proposition 1.10]{Zhe26}.  Thus all the spectral quantities used below
are well defined under the sole fourth-moment assumption.
For example, for $F\in\cC_q^{ \mathsf{P}}\cap L^4(\PP)$,
we have 
\begin{align}\label{rel_Gamma}
\Gamma(F, F) = q m_2(F) + \sum_{k=1}^{2q-1} (q-\frac{k}{2}) J_k(F^2).
\end{align}

We use exactly the exchangeable pair construction in
\cite[Section~3.1]{DVZ18}.  Since we are only concerned with
distributional properties in this paper,
 we can assume (without loss of   generality) that $\eta$ is a proper point process,  meaning
that it can be represented as
$
\eta = \sum_{n=1}^\kappa \delta_{X_n},
$
where $(X_n)_{n\geq 1}$ are random variables with values in $\cZ$ and 
$\kappa$ is a $\N_0\cup\{+\infty\}$-valued random variable; see, e.g., \cite[Corollary 3.7]{LP18}
for this representation.  Let $(Y_n)_{n\geq 1}$ be a sequence of i.i.d. exponential random variables
with unit mean, independent from $(\kappa, X_n)$.
Then, the marked point process 
$
\xi = \sum_{n=1}^\kappa \delta_{(X_n, Y_n)}
$
is a Poisson point process with intensity measure $\mu\otimes\mathbb{Q}$, with $\mathbb{Q}$ 
the distribution measure of $Y_1$. Define $\eta_{e^{-t}}(A) := \xi( A \times [t,+\infty))$ for $A\in\sZ$.
Then, $\eta_{e^{-t}}$ is a Poisson point process on $\cZ$ with control measure $e^{-t}\mu$.
This is exactly the $e^{-t}$-thinning of $\eta$. Now   let
$\eta'_{1-e^{-t}}$ be an independent Poisson process with control measure
$(1-e^{-t})\mu$, and put
\noi
\begin{align}
 \eta^t:=\eta_{e^{-t}}+\eta'_{1-e^{-t}}.
 \label{P_pair}
\end{align}
Then $(\eta,\eta^t)$ is exchangeable (Lemma 3.1 in \cite{DVZ18}) and, for
$F=I_q^{\eta}(f)$, we set
\noi
\begin{align}
 F_t:=I_q^{\eta^t}(f),
 \label{Ft_P}
\end{align}
which forms an exchangeable pair with $F$.
The next result is   taken from  \cite{DVZ18}.

\begin{proposition}[\textsf{Proposition~3.2 in \cite{DVZ18}}]
\label{prop_P_pair}
Let $F=I_q^{\eta}(f)\in L^4(\PP)$ and define $F_t$ by
\eqref{Ft_P}.  Then $(F,F_t)$ is exchangeable and

\noi
\begin{align}
 \frac1t\E[F_t-F | \eta]  &\xrightarrow{t\downarrow 0} -qF  \quad\text{in $L^4(\PP)$},  
 \notag \\
 \frac1t\E[(F_t-F)^2 | \eta]  &\xrightarrow{t\downarrow 0}    2\Gamma(F,F)  \quad \text{in $L^2(\PP)$},  
 \notag \\
 \frac1t\E[(F_t-F)^4] &\xrightarrow{t\downarrow 0}   -4q\,m_4(F)+12\E[F^2\Gamma(F,F)]  \geq0.  
 \notag
\end{align}

\noi
Moreover, the first regression is exact:
$
 \E[F_t-F | \eta]
 =-(1-e^{-qt})F.
$
Moreover, if $F, G$ live in the sum of finitely many chaoses, 
with $F_t, G_t$ defined as above, we have 
\noi
\begin{align}
 \frac1t\E[(F_t-F)(G_t-G) | \eta]
\xrightarrow{t\downarrow 0}    2\Gamma(F,G)  \quad\text{in $L^2(\PP)$}.
 \label{P_cross_reg}
\end{align}
\end{proposition}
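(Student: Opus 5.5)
The plan is to reduce everything to the Mehler representation of the Poisson Ornstein--Uhlenbeck semigroup. The key identity is that, for every $G\in L^2(\sigma\{\eta\})$ written as $G=\mathfrak g(\eta)$,
\begin{align*}
 \E\bigl[\mathfrak g(\eta^t)\,\big|\,\eta\bigr]=P_tG
 =\sum_{q\geq0}e^{-qt}J_qG .
\end{align*}
I would establish this in three steps.

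\begin{enumerate}[label=(\alph*)]
\item Verify it on multiple integrals $I_q^\eta(h)$ with $h$ a simple symmetric kernel vanishing on diagonals. There $I_q^{\eta^t}(h)$ is a polynomial in the counts $\eta^t(A_i)$ of disjoint sets.
\item For such a kernel, condition on $\eta$. Each point of $\eta$ survives independently with probability $e^{-t}$, and the immigrant cloud $\eta'_{1-e^{-t}}$ is centred after compensation by $(1-e^{-t})\mu$. Hence each compensated factor $\widehat{\eta^t}(A_i)$ has conditional mean $e^{-t}\widehat\eta(A_i)$, and the disjointness of the sets makes the conditional expectation factorize. This gives $e^{-qt}I_q^\eta(h)$.
\item Extend by density and $L^2$-continuity. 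The map $G\mapsto\E[\mathfrak g(\eta^t)\mid\eta]$ is an $L^2$-contraction, because $\eta^t$ has the law of $\eta$.
\end{enumerate}

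Exchangeability of $(F,F_t)$ is inherited from the exchangeability of $(\eta,\eta^t)$ (Lemma~3.1 in \cite{DVZ18}), since $F$ and $F_t$ are the same measurable functional of $\eta$ and $\eta^t$. Applying the Mehler identity to $G=F$ gives the exact regression
\begin{align*}
 \E[F_t-F\mid\eta]=-(1-e^{-qt})F .
\end{align*}
Dividing by $t$, the coefficient $(1-e^{-qt})/t$ converges to $q$. The $L^4$ convergence is then trivial because $F\in L^4(\PP)$.

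For the conditional second moment, expand and use the Mehler identity twice:
\begin{align*}
 \E[(F_t-F)^2\mid\eta]
 =P_t(F^2)-2e^{-qt}F^2+F^2 .
\end{align*}
This uses $F^2\in L^2$, which holds because $F\in L^4$. Since $F^2\in\bigoplus_{j\leq 2q}\cC_j^{\mathsf P}$ by \cite[Lemma~2.4]{DP18a}, the quotient $(P_t(F^2)-F^2)/t$ converges in $L^2$ to $L(F^2)$: it is a finite sum of terms $\frac{e^{-jt}-1}{t}J_j(F^2)$. The whole expression divided by $t$ therefore tends to $L(F^2)+2qF^2$. By \eqref{Gamma_def} with $LF=-qF$, this equals $2\Gamma(F,F)$. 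The cross statement \eqref{P_cross_reg} follows from the same computation with $FG$ in place of $F^2$ and $P_tF$, $P_tG$ in place of $e^{-qt}F$. Everything again lives in finitely many chaoses, provided $FG\in L^2$. Alternatively, one can polarize.

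For the fourth moment, expand $\E[(F_t-F)^4]$ and use exchangeability to pair terms:
\begin{align*}
 \E[(F_t-F)^4]=2m_4(F)-8\E[F^3F_t]+6\E[F^2F_t^2].
\end{align*}
Conditioning on $\eta$ gives $\E[F^3F_t]=e^{-qt}m_4(F)$ and $\E[F^2F_t^2]=\E[F^2P_t(F^2)]$, which is finite since $F^2\in L^2$. Dividing by $t$ and letting $t\downarrow0$ yields
\begin{align*}
 8q\,m_4(F)+6\E[F^2L(F^2)].
\end{align*}
Substituting $L(F^2)=2\Gamma(F,F)-2qF^2$ turns this into $-4q\,m_4(F)+12\E[F^2\Gamma(F,F)]$. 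The limit is nonnegative as a limit of nonnegative quantities.

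The main obstacle is step (c) of the first paragraph: justifying the Mehler formula for $\E[\mathfrak g(\eta^t)\mid\eta]$ on all of $L^2$, and in particular for $F^2$, which is not itself a multiple integral. I would handle this by proving the identity on the dense class of simple multiple integrals and then extending it through the $L^2$-contraction property. This avoids any extra integrability beyond $F\in L^4$.
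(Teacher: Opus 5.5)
Your route is the one the paper relies on. The paper does not reprove this statement; it cites \cite{DVZ18} and then uses the Mehler identity $\E[\mathfrak g(\eta^t)\mid\eta]=P_tG$ exactly as you do. Compare your fourth-moment reduction with \eqref{m2Dt}--\eqref{m4Dt}, and your cross-term computation with the analogous derivation of \eqref{R_cross_reg}. The exact regression, the use of the finite chaos expansion of $F^2$, the identity $L(F^2)+2qF^2=2\Gamma(F,F)$, and the fourth-moment limit are all correct.

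The gap lies in the one place where you go beyond the paper: your self-contained proof of the Mehler identity. Step (c) needs simple symmetric kernels vanishing on diagonals to be dense in $L^2_s(\mu^q)$. That holds only when the diagonals are $\mu^q$-null, i.e.\ when $\mu$ has no atoms, whereas the paper allows an arbitrary $\sigma$-finite $\mu$. For instance, if $\mu=\delta_{z_0}$, every such kernel of order $q\geq2$ vanishes $\mu^q$-a.e. Your class then spans only $\cC_0^{\mathsf P}\oplus\cC_1^{\mathsf P}$. It says nothing about $I_2^\eta(\ind_{\{(z_0,z_0)\}})=C_2(\eta(\{z_0\}),1)$, which is a nonzero element of $\cC_2^{\mathsf P}$. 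Since (c) is what delivers $\E[F_t\mid\eta]=e^{-qt}F$ and $\E[F_t^2\mid\eta]=P_t(F^2)$ for a general kernel $f$, your argument as written covers only non-atomic control measures.

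To repair it, verify the identity on a family that is total in $L^2(\sigma\{\eta\})$ for every $\sigma$-finite $\mu$. One choice is $G=e^{-\eta(u)}$ with $u\geq0$ and $\mu(1-e^{-u})<\infty$. Conditioning on $\eta$ (independent retention with probability $e^{-t}$, independent immigration) gives
\[
 \E\bigl[e^{-\eta^t(u)}\,\big|\,\eta\bigr]
 =\exp\bigl\{-(1-e^{-t})\mu(1-e^{-u})\bigr\}\,e^{-\eta(v)},
 \qquad
 e^{-v}:=1-e^{-t}+e^{-t}e^{-u}.
\]
The chaos expansion $J_nG=\frac1{n!}e^{-\mu(1-e^{-u})}I_n^\eta\bigl((e^{-u}-1)^{\ot n}\bigr)$ produces the same expression for $P_tG$, because $e^{-v}-1=e^{-t}(e^{-u}-1)$. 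Your contraction argument then extends the identity to all of $L^2$. Alternatively, simply quote Mehler's formula for Poisson functionals, as the paper implicitly does.

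With this change the rest of your proof goes through. One further point should be made explicit in the cross statement. To pass to the limit in $F(P_tG-G)/t$ in $L^2(\PP)$, you need $F\,J_jG$ and $G\,J_jF$ in $L^2(\PP)$ for each $j$, not merely $FG\in L^2(\PP)$. This holds when all chaotic components lie in $L^4(\PP)$, which is the situation in Theorem~\ref{thm_pert}.
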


Note that \eqref{P_cross_reg} is stated in \cite[Proposition~3.2]{DVZ18}
only for $F\in\cC_q^{ \mathsf{P}}$ and $G\in\cC_p^{ \mathsf{P}}$, but
its proof works for $F, G$ in the sum of finitely many chaoses.

\subsection{Exchangeable pairs on  Rademacher chaos via coordinate refreshing}
\label{SEC_23}

Let $(X_k)_{k\geq1}$ be independent Rademacher variables with
\noi
\begin{align}
 \PP(X_k=1)=p_k \in(0,1),
\qquad
 \PP(X_k=-1)=q_k:=1-p_k,
\notag
\end{align}
and let $Y_k$ be defined as in \eqref{Rad_norm}. 
Starting from the chaos decomposition (\cite[Section 6]{Pri08})
\[
L^2(\sigma\{X_k:k\geq1\}) = \bigoplus_{q\in\N_0} \cC^{\mathsf{R}}_q,
\]
with $\cC^{\mathsf{R}}_q = \{ Q_q(f; Y): f\in\fH^{\odot q}_0\}$ as in \eqref{Rad_chaos},
one can define the Rademacher Ornstein--Uhlenbeck generator $L$ and semigroup 
$P_t$ in the same way as in \eqref{P_OU}, with $J_q$ now denoting the orthogonal 
projection onto $\cC^{\mathsf{R}}_q$. 
One can also define the carr\'e-du-champ operator 
$\Gamma$ by \eqref{Gamma_def}.
If $F\in\cC_d^{\mathsf{R}}\cap L^4(\PP)$, then $F^2$ belongs to
$\bigoplus_{j=0}^{2d}\cC_j^{\mathsf{R}}$.  Indeed, the assertion is
immediate when the kernel of $F$ has finite support; the general case follows
from 
conditional-expectation truncations (i.e., martingale approximation)
and the fact that $\bigoplus_{j=0}^{2d}\cC_j^{\mathsf{R}}$   is closed in $L^2(\PP)$.
Thus all the spectral quantities used below
are well defined under the sole fourth-moment assumption.

For a functional $H$, let $H_k^+$ and $H_k^-$ denote the versions
obtained by fixing $X_k$ equal to $1$ and $-1$, respectively.  The
discrete Malliavin gradient is

\noi
\begin{align}
 D_kH
 :=\sqrt{p_kq_k}\,(H_k^+-H_k^-).
\label{Dk_def}
\end{align}
Thus, with

\noi
\begin{align}
 z_k:=H_k^+-H_k^-,
 \label{zk_def}
\end{align}
we have the precise relation

\noi
\begin{align}
 D_kH=\sqrt{p_kq_k}\,z_k.
\label{Dk_rel}
\end{align}
For $F\in\cC_d^{\mathsf{R}}$, with influence $\Inf_k$ as in \eqref{Rad_inf},
we can easily verify that

\noi
\begin{align}
d d! \Inf_k(F)=\E[(D_kF)^2]
\AND 
d! \sum_{k\geq1}\Inf_k(F)= m_2(F).
\label{D_inf}
\end{align}
If $F+\theta$ is integer-valued, then Fubini's theorem and
$p_kq_k>0$ imply that $F_k^++\theta$ and $F_k^-+\theta$ are both
integer-valued for almost every configuration of the other
coordinates.  Consequently, $z_k\in\Z$ almost surely.
See Remark \ref{rem_32}-(ii) for another appearance of $z_k$.

\medskip

The following construction is from \cite{Zhe19}.
Let $  \cG:=\sigma\{X_k:k\geq1\}.$
Let $(X_k')_{k\geq1}$ be an independent copy of $(X_k)$ and let
$(\tau_k)_{k\geq1}$ be independent standard exponential variables,
independent of both sequences.  Set
\noi
\begin{align}
 X_k^t
 :=X_k\ind_{\{\tau_k\geq t\}}
   +X_k'\ind_{\{\tau_k<t\}},
 \qquad t\geq0,
\notag
\end{align}
and let $Y^t$ be its normalized version.  If
$F=Q_d(f;Y)$, put
$
 F_t:=Q_d(f;Y^t).
 $

\begin{proposition}[\textsf{Lemma 2.2 and Proposition 2.3 in \cite{Zhe19}}]
\label{prop_R_pair}
Let $F=Q_d(f;Y)\in L^4(\PP)$ and define $F_t$ as above.
 Then $(F,F_t)$ is exchangeable and

\noi
\begin{align}
 \frac1t\E[F_t-F | \cG]
 &\xrightarrow{t\downarrow 0}   -dF   \quad  \text{in $L^4(\PP)$}
\notag
 \\
 \frac1t\E[(F_t-F)^2 | \cG]
  &\xrightarrow{t\downarrow 0}     2\Gamma(F,F)
\quad  \text{in $L^2(\PP)$},
\notag
  \\
 \frac1t\E[(F_t-F)^4]
 &\xrightarrow{t\downarrow 0}      -4d\,m_4(F)+12\E[F^2\Gamma(F,F)]
 \geq  0.
\notag
\end{align}

\noi
Moreover, $\E[F_t-F | \cG]=-(1-e^{-dt})F$.
\end{proposition}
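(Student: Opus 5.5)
The plan is to reduce all four assertions to two facts. The first is the product structure of the coordinate-refreshing coupling. The second is the Mehler-type identity $\E[G(X^t)\,|\,\cG]=P_tG$ for $G\in L^2(\cG)$. With these in hand, each claim becomes an explicit spectral computation on the finitely many chaoses carrying $F^2$.

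\emph{Step 1 (exchangeability).} For each $k$, the pair $(X_k,X_k^t)$ is exchangeable. With probability $e^{-t}$ the two entries coincide, and otherwise $X_k^t=X_k'$ is an independent copy of $X_k$; both laws are symmetric in the two entries. The pairs are independent over $k$, so the sequence of pairs $((X_k,X_k^t))_{k\geq1}$ is exchangeable as a pair of sequences $(X,X^t)\overset{d}{=}(X^t,X)$. Since $F=Q_d(f;Y)$ and $F_t=Q_d(f;Y^t)$ are the same measurable map (an $L^2$-limit of finite-support truncations) applied to $X$ and $X^t$, the pair $(F,F_t)$ is exchangeable.

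\emph{Step 2 (Mehler identity and exact regression).} Since $\E Y_k'=0$, we have $\E[Y_k^t\,|\,\cG]=e^{-t}Y_k$. The kernel $f$ vanishes on diagonals, so each monomial $Y_{i_1}^t\cdots Y_{i_d}^t$ involves distinct, conditionally independent coordinates. Hence its conditional expectation given $\cG$ is $e^{-dt}Y_{i_1}\cdots Y_{i_d}$. Summing in $L^2$ gives $\E[F_t\,|\,\cG]=e^{-dt}F$. More generally, the same computation on each chaos gives $\E[G(X^t)\,|\,\cG]=\sum_q e^{-qt}J_qG=P_tG$ for every $G\in L^2(\cG)$. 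One first proves this for finitely supported kernels and then passes to the limit using $L^2$-contractivity of conditional expectation. The exact regression $\E[F_t-F\,|\,\cG]=-(1-e^{-dt})F$ follows, and dividing by $t$ gives convergence to $-dF$ trivially in $L^4$, since $F\in L^4$.

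\emph{Step 3 (conditional second moment).} Apply Step 2 to $G=F^2$. By the remark preceding the proposition, $F^2\in\bigoplus_{j=0}^{2d}\cC_j^{\mathsf R}$. This gives
\begin{align*}
\E[(F_t-F)^2\,|\,\cG]=P_t(F^2)-2e^{-dt}F^2+F^2 .
\end{align*}
Since $P_t(F^2)-F^2=\sum_{q\leq 2d}(e^{-qt}-1)J_q(F^2)$ is a finite sum, dividing by $t$ yields $L^2$-convergence to $L(F^2)+2dF^2$. This equals $2\Gamma(F,F)$ by \eqref{Gamma_def} and $LF=-dF$.

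\emph{Step 4 (fourth moment).} Expand $(F_t-F)^4$ and use exchangeability to identify $\E[F F_t^3]=\E[F^3F_t]$. This gives
\begin{align*}
\E[(F_t-F)^4]=2m_4(F)-8\E[F^3F_t]+6\E[F^2F_t^2].
\end{align*}
By Step 2, $\E[F^3F_t]=e^{-dt}m_4(F)$ and $\E[F^2F_t^2]=\E[F^2P_t(F^2)]$. The second expression is a finite spectral sum $\sum_{q}e^{-qt}\norm{J_q(F^2)}_2^2$, hence differentiable at $t=0$. The derivative is
\begin{align*}
8d\,m_4(F)+6\E[F^2L(F^2)]=8d\,m_4(F)+6\E\bigl[F^2\bigl(2\Gamma(F,F)-2dF^2\bigr)\bigr],
\end{align*}
which equals $-4d\,m_4(F)+12\E[F^2\Gamma(F,F)]$. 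Nonnegativity holds because this is a limit of $t^{-1}\E[(F_t-F)^4]\geq0$.

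The main obstacle is Step 2 in infinite dimension. One must justify that $F_t$ is well defined as an $L^2$-limit and that the conditional-expectation identity passes from finitely supported kernels to general ones. One must also justify that $F^2$ lies in finitely many chaoses under only the $L^4$ assumption, which the paper supplies by martingale truncation. Once these are in place, Steps 3 and 4 are purely algebraic.
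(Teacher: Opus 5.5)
Your proof is correct. It is essentially the semigroup/Mehler computation behind the cited result of \cite{Zhe19}, which the paper itself reuses when it derives \eqref{R_cross_reg} from $\E[(U_t-U)(V_t-V)\mid\cG]=P_t(UV)-UP_tV-VP_tU+UV$ and when it computes $\E[D_t^2]$ and $\E[D_t^4]$ in \eqref{m2Dt}--\eqref{m4Dt}: the exact regression from $\E[Y_k^t\mid\cG]=e^{-t}Y_k$ on off-diagonal monomials, the finite chaos expansion of $F^2$, exchangeability, and $L(F^2)=2\Gamma(F,F)-2dF^2$.
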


The same semigroup argument also gives the cross-regression relation
needed below.  If $U,V$ live in the sum of finitely many Rademacher
chaoses and $U_t,V_t$ are defined using the same refreshed sequence,
then

\noi
\begin{align}
 \frac1t\E[(U_t-U)(V_t-V) | \cG]
 \xrightarrow{t\downarrow0}2\Gamma(U,V)
 \quad\text{in }L^2(\PP).
 \label{R_cross_reg}
\end{align}
Indeed, conditional on $\cG$, the Markov property gives
\[
 \E[(U_t-U)(V_t-V) | \cG]
 =P_t(UV)-UP_tV-VP_tU+UV.
\]
Since $U,V$, and $UV$ live in finite sums of chaoses, division by $t$
and the definition of the generator yield \eqref{R_cross_reg}.

\section{A unified  spectral proof and applications}
\label{SEC3}

\subsection{Spectral bounds and unit jumps}
\label{SEC_31}

In this subsection,  $F\in L^4(\PP)$  belongs to a fixed chaos of order $r$ in
the Poisson or Rademacher setting.
Let $J_q$ denote the orthogonal projection operator onto the $q$-th chaos.
Then, from the discussions in Sections \ref{SEC_22}-\ref{SEC_23}, 
we have the chaos decomposition \eqref{2_intro}:
$
 F^2=\sum_{q=0}^{2r} J_q(F^2)
$.
By orthogonality, we   get

\noi
\begin{align}
 J_0(F^2)
 =m_2(F),
 \quad
m_4(F)=\sum_{q=0}^{2r} \| J_q(F^2)\|_2^2,
\quad
 m_3(F)= \E[F J_r(F^2)].
 \label{sp_1}
\end{align}
and
\begin{align}\label{sp_2}
\| J_r(F^2) -F\|_2^2= \|J_r(F^2)\|_2^2  + m_2(F) - 2m_3(F).
\end{align}

\noi
When no underlying space is specified, we write\footnote{That is, 
 $\cD=\cP$ on Poisson space as in  \eqref{def_P}
  and $\cD=\cR$ on Rademacher space as in \eqref{def_R}.}

\noi
\begin{align}
 \cD(F)
 :=m_4(F)-3m_2(F)^2-2m_3(F)+m_2(F).
 \label{def_CD}
\end{align}

\noi
Then,  a direct calculation with \eqref{sp_1}
yields the basic identity

\noi
\begin{align}  \label{D_ide}
\begin{aligned}
 \cD(F)
 &= \bigg(\sum_{q=1}^{2r-1} \| J_q(F^2)\|_2^2 \ind_{\{ q\neq r\}} \bigg)
 + m_2(F)^2 + \| J_r(F^2)\|_2^2 + \| J_{2r}(F^2)\|_2^2 \\
 &\qquad\qquad   -3m_2(F)^2  -2 \E[ F J_r(F^2)] +m_2(F). \\
 &= \|J_{2r}(F^2)\|_2^2 -  2m_2(F)^2 + \fE_r(F),
\end{aligned}
\end{align}
where

\noi
\begin{align}
 \fE_r(F)
 :=\sum_{q=1}^{2r-1} \| J_q(F^2)\|_2^2 \, \ind_{\{q\ne r\}}
   + \| J_r(F^2) -F\|_2^2. 
 \label{E_abs}
\end{align}

\begin{proposition}
\label{prop_J}
Fix $r\in\N$
and
let $(F_t, F)$ be the exchangeable pair of
Proposition~\ref{prop_P_pair} with $q=r$ or Proposition~\ref{prop_R_pair}
with $d=r$.
Assume $F$ lives in the $r$-th chaos 
and 
put
$D_t:=F_t-F.$
Then,

\noi
\begin{align}
 \cJ(F)
 :=\lim_{t\downarrow0}
 \frac{\E[D_t^2(D_t^2-1)]}{1-e^{-rt}}
 \label{J_limit}
\end{align}
exists  and

\noi
\begin{align}
 \cJ(F)
 =8m_4(F)-\frac{6}{r}  \sum_{q=1}^{2r}  q\|J_q(F^2)\|_2^2
 -2m_2(F).
 \label{J_sp}
\end{align}
If in addition $F$  satisfies the lattice condition
\eqref{cond_La}, then $\cJ(F)\geq 0$.

\end{proposition}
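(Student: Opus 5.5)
Write $\E[D_t^2(D_t^2-1)]=\E[D_t^4]-\E[D_t^2]$ and compute the two pieces separately. Both will be normalized by $1-e^{-rt}\sim rt$ as $t\downarrow0$.

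\emph{Second moment.} By exchangeability and the exact regression $\E[D_t | \cdot]=-(1-e^{-rt})F$ from Proposition~\ref{prop_P_pair} or Proposition~\ref{prop_R_pair}, we get
\[
\E[D_t^2]=-2\E[F D_t]=2(1-e^{-rt})m_2(F).
\]
Hence $\E[D_t^2]/(1-e^{-rt})=2m_2(F)$ exactly.

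\emph{Fourth moment.} The same propositions give $\frac1t\E[D_t^4]\to -4r\,m_4(F)+12\E[F^2\Gamma(F,F)]$. Dividing by $r$ instead of $t$ accounts for the normalization, so
\[
\lim_{t\downarrow0}\frac{\E[D_t^4]}{1-e^{-rt}}=-4m_4(F)+\frac{12}{r}\E[F^2\Gamma(F,F)].
\]
This establishes existence of the limit \eqref{J_limit} and gives
\[
\cJ(F)=-4m_4(F)+\tfrac{12}{r}\E[F^2\Gamma(F,F)]-2m_2(F).
\]

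\emph{Spectral form.} To reach \eqref{J_sp}, rewrite $\E[F^2\Gamma(F,F)]$ spectrally. From \eqref{Gamma_def}, with $LF=-rF$, we have $\Gamma(F,F)=\frac12 L(F^2)+rF^2$. Using \eqref{sp_1} and the orthogonality of the chaos decomposition \eqref{2_intro},
\[
\E[F^2\Gamma(F,F)]=r\,m_4(F)-\frac12\sum_{q=1}^{2r}q\|J_q(F^2)\|_2^2 .
\]
This is consistent with \eqref{rel_Gamma} in the Poisson case, and it holds in the Rademacher case by the same definition, since $F^2$ lies in a finite sum of chaoses. Substituting gives
\[
\cJ(F)=-4m_4+12m_4-\frac6r\sum_q q\|J_q(F^2)\|_2^2-2m_2,
\]
which is \eqref{J_sp}.

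One point needs justification: the pairing $\E[F^2 L(F^2)]=-\sum_q q\|J_q(F^2)\|_2^2$ requires only $F\in L^4$ and the finite chaos expansion of $F^2$, both recorded in Sections~\ref{SEC_22}--\ref{SEC_23}. I expect this step to be routine.

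\emph{Nonnegativity.} Under the lattice condition, $F+\theta$ and $F_t+\theta$ both take values in $\N_0$; for $F_t$ this follows because $(F,F_t)$ is exchangeable, so $F_t$ has the same law as $F$. Therefore $D_t\in\Z$ almost surely. For integers $d^2(d^2-1)\geq0$, so every prelimit quotient is nonnegative, and hence so is the limit $\cJ(F)$.

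The main obstacle is the limit of $\E[D_t^4]/t$. That limit is taken directly from the cited propositions, together with the convergence $t/(1-e^{-rt})\to 1/r$. Everything else is algebra.
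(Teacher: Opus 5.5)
Your proof is correct and reaches \eqref{J_sp}. The second-moment computation and the nonnegativity argument are the same as the paper's. For the fourth moment, however, you take a different route, which is worth comparing.

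You import the limit $\frac1t\E[D_t^4]\to -4r\,m_4(F)+12\E[F^2\Gamma(F,F)]$ from Propositions~\ref{prop_P_pair} and \ref{prop_R_pair}. You then rewrite $\E[F^2\Gamma(F,F)]=r\,m_4(F)-\frac12\sum_{q}q\|J_q(F^2)\|_2^2$ using $\Gamma(F,F)=\frac12L(F^2)+rF^2$.

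The paper instead expands $(F_t-F)^4$ directly. Using exchangeability and $\E[G\,H_t]=\E[G\,P_tH]$, it writes $\E[D_t^4]=2m_4(F)-8\E[F^3P_tF]+6\E[F^2P_t(F^2)]$. This gives the exact identity $\E[D_t^4]=8(1-e^{-rt})m_4(F)-6\sum_{q=0}^{2r}(1-e^{-qt})\|J_q(F^2)\|_2^2$ for every $t>0$, and only then does it let $t\downarrow0$.

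Your version is shorter given the cited propositions. It also makes explicit that $\cJ(F)$ is just the carr\'e-du-champ fourth-moment quantity of those propositions, rewritten spectrally. The cost is that the analytic content, the limit of $\E[D_t^4]/t$, is outsourced.

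The paper's computation is self-contained: it needs only the semigroup action and the finite chaos expansion of $F^2$, and it yields a non-asymptotic formula. More importantly, it carries over verbatim to the non-homogeneous $H=\sum_j H_j$ in Lemma~\ref{lem_pert_est}. There the cited fourth-moment limit, which is stated only for a single chaos, is not directly available.
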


\begin{proof}
By the construction of exchangeable pairs  
through  Ornstein-Uhlenbeck semigroup, 
we have 

\noi
\begin{align} \label{m2Dt}
\begin{aligned} 
\E[ D_t^2] &= \E[ F_t^2] + \E[F^2] - 2\E[ F F_t] = 2\E[ F^2] - 2 \E[ F P_tF]  \\
& =2(1-e^{-rt})   m_2(F)
\end{aligned}
\end{align}
and

\noi
\begin{align} \label{m4Dt}
\begin{aligned} 
 \E[D_t^4]
 &= \E[ F_t^4 + F^4 - 4F^3_t F - 4F^3F_t + 6F^2 F_t^2] \\
 & =  2\E[F^4] - 8 \E[ F^3 P_tF] + 6 \E[F^2 P_t(F^2)] \\
 &= 2m_4(F)-8e^{-rt}m_4(F)
  +6\sum_{q=0}^{2r}e^{-qt } \|J_q(F^2)\|_2^2 \\
  &= 8(1-e^{-rt} ) m_4(F) - 6 \sum_{q=0}^{2r} (1 - e^{-qt} ) \|J_q(F^2)\|_2^2 
\end{aligned}
\end{align}

\noi
Then, the desired limiting expression \eqref{J_limit}
follows from the above two moment expressions
with $(1-e^{-qt})\sim qt$ as $t \downarrow 0$.
Lastly, if $F+\theta\in\N_0$ for some $\theta$,
then
 $F_t+\theta\in\N_0$ as well,
so that $D_t\in\Z$ and $D_t^2(D_t^2-1)\geq0$.  
Hence $\cJ(F)\geq 0$.
\qedhere

\end{proof}

\begin{remark} \label{rem_32}
%
%

Suppose $F = f( X_1, \ldots, X_m)$ depends on finitely many 
Rademacher random variables
(so that $F$ is uniformly bounded almost surely)
and define 
$F_t= f(  X^t_1, \ldots, X^t_m)$.
Then, 
\begin{align*}
(F_t - F)^2 
& =
 \sum_{k=1}^m  | f(  X^t_1, \ldots, X^t_m) - f(X_1, \ldots, X_m)|^2 \ind_{\mathcal{E}_k} 
 +    (F_t  - F)^2 \ind_{\mathcal{E}_0},
\end{align*}
where $\mathcal{E}_k:=\{\tau_k<t,\ \tau_j\geq t\text{ for all }j\ne k\}$
 so that $\PP(\mathcal{E}_k) =   (1- e^{-t})  e^{-(m -1) t} $,
and  $\mathcal{E}_0$ denotes the event that
there are at least two different coordinates  so that 
$\PP(\mathcal{E}_0) = O(t^2)$,
while we have $F_t = F$  if no coordinate is refreshed before time $t$.
Then, using the independence and $z_k = F_k^+ - F_k^-$
as in \eqref{zk_def},
we get 
\begin{align*}
\E[ (F_t - F)^2 ]
& =
 \sum_{k=1}^m 2 p_kq_k \E[ z_k^2]  (1- e^{-t})  e^{-(m -1) t} 
 + O (t^2)
\end{align*}
and in the same way, we get 
\begin{align*}
\E[ (F_t - F)^4 ]
& =
 \sum_{k=1}^m 2 p_kq_k \E[ z_k^4]  (1- e^{-t})  e^{-(m -1) t} 
 + O (t^2)
\end{align*}
This implies 
\begin{align}\label{J_R_coord}
\frac{1}{t} \E\big[ (F_t - F)^4 -(F_t - F)^2 \big] \xrightarrow{t\downarrow 0}
2 \sum_{k=1}^m  p_kq_k \E[ z_k^2(z_k^2-1)],
\end{align}
which leads to a different formula for $\cJ(F)$ 
from \eqref{J_sp}.
 See Remark \ref{rem_useJ} for an application.

\end{remark}

 Now we are ready to state the main bound of the paper.

\begin{theorem}[\textsf{Master bounds}]
\label{thm_master}
Fix $r\in\N$ and
let $F\in L^4(\PP)$ belong to the $r$-th chaos in the Poisson setting
or Rademacher setting.
Recall $\cD(F)$ and $\fE_r(F)$ from \eqref{def_CD} and \eqref{E_abs}.
Assume that a nonnegative quantity $\rho(F)$ satisfies

\noi
\begin{align}
 \|J_{2r}(F^2)\|_2^2\geq2m_2(F)^2-\rho(F).
 \label{top_rho}
\end{align}

\noi
Put

\noi
\begin{align}
 \cT_F:=\frac1r\Gamma(F,F)-m_2(F)-\frac F2
 \AND
 \mathfrak d(F):=\abs{\cD(F)}+\rho(F).
 \label{frak_d}
\end{align}
Then,
\begin{align}
 \fE_r(F)&\leq\mathfrak{d}(F),
 \label{E_master}\\
 \abs{m_3(F)-m_2(F)}
 &\leq\sqrt{m_2(F)\mathfrak d(F)},
 \label{m3_master}\\
 \norm{\cT_F}_2
 &\leq\frac{2r-1}{2r}\sqrt{\mathfrak d(F)}.
 \label{T_master}
\end{align}
Moreover, without any lattice assumption,
\begin{align}
 \cJ(F)
 &\leq4\sqrt{m_2(F)\mathfrak d(F)}
   +12\mathfrak d(F).
 \label{J_master}
\end{align}
If, in addition, $\rho(F)=0$ and $\cD(F)\geq0$, then
$12\mathfrak d(F)$ in \eqref{J_master} can be replaced by
$8\mathfrak d(F)$.

If $F$ additionally satisfies the lattice condition \eqref{cond_La}
with shift $\theta>0$, then, with
$c_\theta:=(1-e^{-\theta})/\theta$,
\noi
\begin{align}
 \dTV(F+\theta,P_\theta)
 &\leq c_\theta\bigg[
 \abs{m_2(F)-\theta}
 +\frac{2r-1}{2r}\sqrt{\mathfrak d(F)}
 +\frac23\sqrt{m_2(F)\mathfrak d(F)}
 +2\mathfrak d(F)
 \bigg].
 \label{master_TV}
\end{align}
In the preceding special case $\rho(F)=0$ and $\cD(F)\geq0$,
$2\mathfrak d(F)$ in \eqref{master_TV} can be replaced by
$\frac43\mathfrak d(F)$.
\end{theorem}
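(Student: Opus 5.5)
The plan is to read everything off the spectral identity \eqref{D_ide}, Cauchy--Schwarz, and the explicit spectral form of $\Gamma(F,F)$ and of $\cJ(F)$ in \eqref{J_sp}, and then to feed these into Lemma~\ref{lem_Stein} and let $t\downarrow0$.

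\emph{Step 1: \eqref{E_master} and \eqref{m3_master}.}
By \eqref{D_ide}, $\fE_r(F)=\cD(F)-\bigl(\|J_{2r}(F^2)\|_2^2-2m_2(F)^2\bigr)$. By \eqref{top_rho} the bracket is at least $-\rho(F)$, so $\fE_r(F)\le\cD(F)+\rho(F)\le\mathfrak d(F)$.

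For the third moment, \eqref{sp_1} gives $m_3(F)-m_2(F)=\E[F(J_r(F^2)-F)]$. Cauchy--Schwarz then yields $|m_3(F)-m_2(F)|\le\sqrt{m_2(F)}\,\|J_r(F^2)-F\|_2\le\sqrt{m_2(F)\fE_r(F)}$, and \eqref{E_master} finishes.

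\emph{Step 2: \eqref{T_master}.}
In both settings \eqref{Gamma_def} and $LJ_k=-kJ_k$ give
\[
\Gamma(F,F)=\tfrac12\bigl(L(F^2)+2rF^2\bigr)=\sum_{k=0}^{2r}\bigl(r-\tfrac k2\bigr)J_k(F^2),
\]
which is \eqref{rel_Gamma}. The $k=2r$ term vanishes and the $k=0$ term is $r\,m_2(F)$. Hence
\[
\cT_F=\sum_{k\ne r,\,1\le k\le 2r-1}\Bigl(1-\tfrac{k}{2r}\Bigr)J_k(F^2)+\tfrac12\bigl(J_r(F^2)-F\bigr).
\]
By orthogonality, $\|\cT_F\|_2^2$ is a weighted sum of the terms of $\fE_r(F)$. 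The weights are $(1-\frac k{2r})^2$ for $k\neq r$ and $\frac14$ for $k=r$, all at most $(\frac{2r-1}{2r})^2$. Therefore $\|\cT_F\|_2\le\frac{2r-1}{2r}\sqrt{\fE_r(F)}$, and Step~1 applies.

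\emph{Step 3: \eqref{J_master}.}
This is the step needing the most care with constants. Start from \eqref{J_sp} and substitute $m_4(F)=m_2(F)^2+\sum_{q\ge1}\|J_q(F^2)\|_2^2$. The coefficient $8-6q/r$ equals $-4$ at $q=2r$ and $2$ at $q=r$. Next use \eqref{sp_2} in the form
\[
2\|J_r(F^2)\|_2^2-2m_2(F)=2\|J_r(F^2)-F\|_2^2+4(m_3(F)-m_2(F)).
\]
Together these give the exact identity
\[
\cJ(F)=-4\bigl(\|J_{2r}(F^2)\|_2^2-2m_2(F)^2\bigr)+\sum_{q\ne r,\,q<2r}\Bigl(8-\tfrac{6q}{r}\Bigr)\|J_q(F^2)\|_2^2+2\|J_r(F^2)-F\|_2^2+4(m_3(F)-m_2(F)).
\]
Now bound each piece. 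The first term is at most $4\rho(F)$ by \eqref{top_rho}. Every coefficient in the middle sum is at most $8$, and negative ones may be dropped, so the middle two terms are at most $8\fE_r(F)\le8\mathfrak d(F)$. The last term is at most $4\sqrt{m_2(F)\mathfrak d(F)}$ by Step~1. Since $4\rho(F)\le4\mathfrak d(F)$, this gives \eqref{J_master}. When $\rho(F)=0$ the $4\rho$ contribution disappears, leaving $8\mathfrak d(F)$. No lattice assumption is used here.

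\emph{Step 4: \eqref{master_TV}.}
Assume the lattice condition and apply Lemma~\ref{lem_Stein} to $X=F+\theta$, $X'=F_t+\theta$. This pair is $\N_0$-valued and exchangeable by Propositions~\ref{prop_P_pair} and~\ref{prop_R_pair}, with $a=1-e^{-rt}$ and $R=0$ by the exact regression.

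For $(\clubsuit)$, condition first on the full $\sigma$-field, then use that $X$ is a function of it and that conditional expectation is an $L^1$-contraction. This bounds $(\clubsuit)$ by $\E\bigl|\E[D_t^2\mid\eta]/(2a)-(F+2\theta)/2\bigr|$ (with $\cG$ in place of $\eta$ in the Rademacher case). As $t\downarrow0$ this converges, by the $L^2$ limit $t^{-1}\E[D_t^2\mid\eta]\to2\Gamma(F,F)$ and $a\sim rt$, to $\E|\Gamma(F,F)/r-F/2-\theta|$. That quantity is at most $|m_2(F)-\theta|+\|\cT_F\|_2$.

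For $(\spadesuit)$, Proposition~\ref{prop_J} gives the limit $\cJ(F)/6$. Since the left side of \eqref{Stein_pair} does not depend on $t$, let $t\downarrow0$ and insert Steps~2 and~3. This produces $\frac46=\frac23$ in front of $\sqrt{m_2\mathfrak d}$, and $\frac{12}6=2$, or $\frac86=\frac43$ in the special case, in front of $\mathfrak d(F)$.
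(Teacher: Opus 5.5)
Your proposal is correct and follows the paper's proof essentially step for step. It uses the same bound $\fE_r(F)=\cD(F)-\mathcal Q(F)\le\mathfrak d(F)$ with $\mathcal Q(F):=\|J_{2r}(F^2)\|_2^2-2m_2(F)^2$, the same Cauchy--Schwarz and chaos-weight arguments for \eqref{m3_master}--\eqref{T_master}, and the same application of Lemma~\ref{lem_Stein} with $a=1-e^{-rt}$, $R=0$, followed by $t\downarrow0$. Your exact identity for $\cJ(F)$ is the paper's $8\cD(F)-12\mathcal Q(F)+4[m_3(F)-m_2(F)]-\mathcal N_r(F)$ rearranged via $\cD=\mathcal Q+\fE_r$, and it gives the same intermediate bound $8\abs{\cD(F)}+12\rho(F)$.
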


\begin{proof}
Put
\[
 \mathcal Q(F):=\norm{J_{2r}(F^2)}_2^2-2m_2(F)^2
 \geq-\rho(F).
\]
By \eqref{D_ide}--\eqref{E_abs},
$\fE_r(F)=\cD(F)-\mathcal Q(F)\geq0$.  Since
$-\mathcal Q(F)\leq\rho(F)$,
\[
 \fE_r(F)
 \leq\abs{\cD(F)}+\rho(F)
 =\mathfrak d(F),
\]
which proves \eqref{E_master}.  Moreover,
\[
 m_3(F)-m_2(F)
 =\E\big[(J_r(F^2)-F)F\big].
\]
Cauchy--Schwarz, \eqref{E_abs}, and \eqref{E_master} give
\eqref{m3_master}.

From the chaos decomposition \eqref{rel_Gamma},
\noi
\begin{align*}
 \norm{\cT_F}_2^2
 &=\sum_{k=1}^{2r-1}
 \left(1-\frac{k}{2r}\right)^2
 \norm{J_k(F^2)}_2^2\ind_{\{k\ne r\}}
 +\frac14\norm{J_r(F^2)-F}_2^2\\
 &\leq\left(\frac{2r-1}{2r}\right)^2\fE_r(F),
\end{align*}
which proves \eqref{T_master}.

We next derive \eqref{J_master}; no lattice condition is used here.
Set
\[
 \mathcal N_r(F)
 :=6\norm{J_r(F^2)-F}_2^2
 +\frac6r\sum_{\substack{1\leq k\leq2r-1\\k\ne r}}
 k\norm{J_k(F^2)}_2^2\geq0.
\]
A direct rearrangement of \eqref{J_sp}, using \eqref{def_CD} and
\eqref{sp_2}, gives
\[
 \cJ(F)
 =8\cD(F)-12\mathcal Q(F)
 +4\big[m_3(F)-m_2(F)\big]-\mathcal N_r(F).
\]
Since $-\mathcal Q(F)\leq\rho(F)$,
\[
 8\cD(F)-12\mathcal Q(F)
 \leq8\abs{\cD(F)}+12\rho(F)
 \leq12\mathfrak d(F).
\]
Together with \eqref{m3_master}, this proves \eqref{J_master}.
If $\rho(F)=0$ and $\cD(F)\geq0$, then
$\mathcal Q(F)\geq0$, and the same identity yields the sharper bound
\noi
\begin{align}
 \cJ(F)
 \leq4\sqrt{m_2(F)\cD(F)}+8\cD(F).
\end{align}

Assume now that $F$ satisfies \eqref{cond_La}.  Let $F_t$ be the
exchangeable pair from Proposition~\ref{prop_J}, put
$D_t:=F_t-F\in\Z$, and apply Lemma~\ref{lem_Stein} to
$(F+\theta,F_t+\theta)$ with $a=1-e^{-rt}$ and $R=0$.  This gives
\noi
\begin{align*}
 \dTV(\cL(F+\theta),P_\theta)
 \leq c_\theta\bigg\{
 \E\abs{
 \frac{\E[D_t^2\mid F]}{2(1-e^{-rt})}
 -\frac{F+2\theta}{2}}
 +\frac{\E[D_t^2(D_t^2-1)]}{6(1-e^{-rt})}
 \bigg\}.
\end{align*}
By Proposition~\ref{prop_J}, the second term converges to
$\cJ(F)/6$.  Propositions~\ref{prop_P_pair} and \ref{prop_R_pair},
together with the definition of $\cT_F$, give
\noi
\begin{align}
 \limsup_{t\downarrow0}
 \E\abs{
 \frac{\E[D_t^2\mid F]}{2(1-e^{-rt})}
 -\frac{F+2\theta}{2}}
 \leq\abs{m_2(F)-\theta}+\norm{\cT_F}_2.
 \label{cond_master}
\end{align}
Combining \eqref{T_master}, \eqref{J_master}, and its sharper version
with the preceding two displays proves \eqref{master_TV} and the final
assertion.
\qedhere
\end{proof}

We now verify \eqref{top_rho} in the two settings.

\begin{lemma}
\label{lem_P_top}
Let $F=I_q^{\eta}(f)\in L^4(\PP)$.
Then,

\noi
\begin{align}
 \| J_{2q}(F^2) \|_2^2    \geq2m_2(F)^2.
\notag
\end{align}
\end{lemma}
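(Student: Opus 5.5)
The plan is to identify the top chaos projection explicitly through the Poisson product formula and then to reduce the inequality to a statement about symmetrizations of tensor products.

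First step: under the sole assumption $F\in L^4(\PP)$, the product formula from \cite[Lemma~2.4]{DP18a} (see also \cite{DP18b}) gives $J_{2q}(F^2)=I_{2q}(f\sot f)$, where $f\sot f$ is the canonical symmetrization of $f\ot f$. This holds because the highest-order term of the product formula has no contraction or identification of variables: the $s=\ell=0$ term is $I_{2q}(f\ot f)$, and the multiple integral only sees its symmetrization. Consequently
\[
\|J_{2q}(F^2)\|_2^2=(2q)!\,\|f\sot f\|^2_{L^2(\mu^{2q})}.
\]
Here we use $m_2(F)=q!\|f\|^2$, so the target is $2m_2(F)^2=2(q!)^2\|f\|^4$.

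Second step: expand $\|f\sot f\|^2$ as an average over permutations $\pi\in\mathfrak S_{2q}$ of $\langle f\ot f,(f\ot f)\circ\pi\rangle$. Group the permutations according to the number $s\in\{0,\dots,q\}$ of coordinates that the first block sends into the second block. The standard computation, the same one used in the Gaussian fourth-moment theorem (\cite{NP05}, \cite[Lemma~5.2.4]{NP12}), gives
\[
(2q)!\,\|f\sot f\|^2=2(q!)^2\|f\|^4+\sum_{s=1}^{q-1}(q!)^2\binom qs^2\|f\ot_s f\|^2.
\]
The terms $s=0$ and $s=q$ each contribute $(q!)^2\|f\|^4$, and the intermediate terms are squared norms of contractions, which are nonnegative. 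Discarding them yields the lemma.

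The main obstacle is justifying the first step rigorously under only $F\in L^4$. Without extra integrability on $f$, the contractions $f\star_s^\ell f$ need not be well defined pointwise. I would therefore rely on the cited result of D\"obler--Peccati, or on \cite[Proposition~1.10]{Zhe26}, which states exactly that $F^2\in\bigoplus_{j\le 2q}\cC_j$ with top component $I_{2q}(f\sot f)$.

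If that identification were unavailable, I would argue differently. Approximate $f$ by bounded kernels supported on sets of finite measure, for which the product formula is classical. Then pass to the limit, using the fact that $J_{2q}(F_n^2)\to J_{2q}(F^2)$ in $L^2$ whenever $F_n\to F$ in $L^4$. The inequality itself survives the limit because both of its sides are continuous in this topology.
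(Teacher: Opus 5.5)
Your proposal is correct and follows the paper's own proof. You identify $J_{2q}(F^2)=I_{2q}(f\sot f)$ under the sole $L^4$ assumption, relying, as the paper does, on \cite{DP18a,Zhe26}. You then expand $(2q)!\,\|f\sot f\|^2=(q!)^2\sum_{s=0}^q\binom qs^2\|f\ot_s f\|^2$, for which the paper cites \cite[Lemma~5.1]{DVZ18}, and drop the nonnegative terms with $1\leq s\leq q-1$.

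Your fallback approximation argument is not needed. It would also require kernels $f_n$ with $I_q(f_n)\to F$ in $L^4(\PP)$, which is not automatic on Poisson space, because $L^2$-convergence of kernels does not control fourth moments there.
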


\begin{proof} 
The highest projection is $I_{2q}^{\eta}(f\sot f)$,
with $f\sot f$ the   canonical symmetrization of $f\otimes f$.
It is known that 

\noi
\begin{align}
 \| J_{2q}(F^2) \|_2^2 = (2q)! \|  f\sot f \|^2_{L^2(\mu^{2q})}
 &=  (q!)^2\sum_{s=0}^q
 \binom qs^2\norm{f\ot_s f}^2_{L^2(\mu^{2q-2s})} \notag \\
 &= 2m_2(F)^2 +  (q!)^2\sum_{s=1}^{q-1}
 \binom qs^2\norm{f\ot_s f}^2_{L^2(\mu^{2q-2s})},
 \label{P_c_s}
\end{align}
where $f \ot_s f$ denotes the   $s$-contraction between $f$ and itself; 
see, e.g., \cite[Lemma 5.1]{DVZ18}.
Hence, the proof is completed. 
\qedhere

\end{proof}

\begin{lemma}
\label{lem_R_top}

Let $F$ belong to the $d$-th Rademacher chaos and put
$\cM(F)=\sup_k\Inf_k(F)$ as in \eqref{Rad_inf}.  Then
\noi
\begin{align}
 A_{2d} :=  \| J_{2d}( F^2) \|_2^2
 \geq2m_2(F)^2
 - (2d)!\binom{2d}{d}m_2(F) \cM(F).
 \label{R_top}
\end{align}
\end{lemma}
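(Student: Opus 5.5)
Throughout, $F=Q_d(f;Y)$ with $f\in\fH_0^{\odot d}$. The plan is to write $F^2$ explicitly through the Rademacher product formula and isolate the top projection. For distinct indices, products of the $Y_i$ are orthonormal, and
\[
Y_k^2 = 1 + c_k Y_k, \qquad c_k := \frac{q_k-p_k}{\sqrt{p_kq_k}},
\]
so collisions only lower the chaos order. The product $F^2=\sum f(\mathbf i)f(\mathbf j)Y_{\mathbf i}Y_{\mathbf j}$ therefore contributes to $J_{2d}(F^2)$ only through the pairs $(\mathbf i,\mathbf j)$ whose $2d$ indices are all distinct. Hence
\[
J_{2d}(F^2)=Q_{2d}(g;Y), \qquad g:=\widetilde{(f\ot f)\ind_{\mathrm{off}}},
\]
where $\ind_{\mathrm{off}}$ restricts $f\ot f$ to the off-diagonal part of $\N^{2d}$ (no common index between $\mathbf i$ and $\mathbf j$) and the tilde denotes symmetrization.

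First I will compute $\|Q_{2d}(g;Y)\|_2^2=(2d)!\,\|g\|^2$. Expanding the symmetrization of $h:=(f\ot f)\ind_{\mathrm{off}}$ exactly as in the Poisson identity \eqref{P_c_s}, I expect
\[
(2d)!\,\|g\|^2=(d!)^2\sum_{s=0}^d\binom ds^2\,\langle h, h\circ\pi_s\rangle ,
\]
where $\pi_s$ swaps $s$ coordinates of the first block with $s$ coordinates of the second. This splits into three kinds of terms.
\begin{itemize}
\item The terms $s=0$ and $s=d$ each give $\sum_{\mathrm{off}} f(\mathbf i)^2f(\mathbf j)^2$.
\item The intermediate terms $1\le s\le d-1$ give the off-diagonal restriction of $\|f\ot_s f\|^2$.
\end{itemize}
Without the off-diagonal restriction, these would be $m_2(F)^2/(d!)^2$ for each of $s=0,d$ and nonnegative contraction norms for the others. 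So I will write each term as its unrestricted version minus a collision part.

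Second, I will bound the collision parts. For $s\in\{0,d\}$, the missing mass is
\[
\sum_{\mathbf i\cap\mathbf j\neq\emptyset} f(\mathbf i)^2f(\mathbf j)^2 \le \sum_{a=1}^d\sum_{b=1}^d\sum_k\ \sum_{\mathbf i:\,i_a=k} f(\mathbf i)^2\sum_{\mathbf j:\,j_b=k}f(\mathbf j)^2 \le d^2\,\cM(F)\sum_k \Inf_k(f)=d^2\,\cM(F)\,\frac{m_2(F)}{d!},
\]
using \eqref{D_inf}. Multiplying by $(d!)^2$, this gives a loss of at most $d\,d!\,d\,m_2\cM$ for each of the two terms.

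For the intermediate terms, I will avoid expanding the restricted contraction norms. Instead, I will apply Cauchy--Schwarz to $\langle h,h\circ\pi_s\rangle$, or bound it from below by the unrestricted value minus a collision sum controlled in the same way by $\cM\cdot m_2$. Alternatively, I will bound $|\langle h,h\circ\pi_s\rangle-\langle f\ot f,(f\ot f)\circ\pi_s\rangle|$ by $2\sum_{\mathrm{coll}}|f(\mathbf i)f(\mathbf j)f(\pi\mathbf i)f(\pi\mathbf j)|$. Cauchy--Schwarz reduces this to the same collision sum $\le d^2\cM\,m_2/d!$. The unrestricted intermediate terms are $\|f\ot_s f\|^2\ge0$ and can be dropped.

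Summing over $s$ with weights $(d!)^2\binom ds^2$ gives a total loss of at most
\[
(d!)^2\sum_s\binom ds^2\cdot 2d^2\,\frac{\cM\, m_2}{d!}\;\le\; 2d^2\,d!\binom{2d}{d}\,m_2\cM ,
\]
which is dominated by $(2d)!\binom{2d}{d}m_2\cM$ since $2d^2d!\le(2d)!$. This yields \eqref{R_top}.

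The main obstacle is the combinatorics of the symmetrization in the first step. I need to ensure that the cross terms really reduce to contractions with collisions, and that no negative contribution escapes the $\cM$ control. This is where the generous constant $(2d)!\binom{2d}{d}$ gives slack: I will bound every permutation pairing crudely rather than tracking exact multiplicities. Finally, for general (infinite-support) $f$, I will first prove the inequality for finitely supported kernels and then pass to the limit by the truncation argument of Section~\ref{SEC_23}.
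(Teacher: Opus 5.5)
Your argument is correct; it reaches \eqref{R_top} by a somewhat different route than the paper.

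The paper splits orthogonally, $\|(f\sot f)\ind_{\Delta_{2d}}\|^2=\|f\sot f\|^2-\|(f\sot f)\ind_{\Delta_{2d}^c}\|^2$. It gets $(2d)!\|f\sot f\|^2\geq 2m_2(F)^2$ from the contraction identity \eqref{P_c_s} of Lemma~\ref{lem_P_top}, and it imports the bound on the collision part from \cite[(3.3)]{Zhe19}.

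You instead expand the norm of the symmetrized off-diagonal tensor over the $s$-indexed double cosets and compare each term with its unrestricted counterpart. Each discrepancy is exactly $\langle H,(H\circ\pi_s)\ind_{\mathrm{coll}}\rangle$ with $H=f\ot f$, so your extra factor $2$ is unnecessary. You bound it by Cauchy--Schwarz on the permutation-invariant collision set. Everything then reduces to the collision mass $\sum_{\mathbf i\cap\mathbf j\neq\emptyset}f(\mathbf i)^2f(\mathbf j)^2\leq d^2\cM(F)m_2(F)/d!$, which you prove directly from a union bound and \eqref{D_inf}.

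The paper's version is shorter on the page but relies on an external estimate. Yours is self-contained and gives the better constant $2d^2\,d!\binom{2d}{d}\leq(2d)!\binom{2d}{d}$.

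Of the options you list for the intermediate terms, only ``unrestricted value minus collision sum'' works. A bare Cauchy--Schwarz bound $|\langle h,h\circ\pi_s\rangle|\leq\|h\|^2$ gives no useful lower bound, but the former is the version you actually use.

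Your collision estimate also combines with the paper's splitting to give an even shorter proof. Symmetrization is an $\ell^2$-contraction that commutes with multiplication by the symmetric indicator $\ind_{\Delta_{2d}^c}$, so $\|(f\sot f)\ind_{\Delta_{2d}^c}\|^2\leq\|(f\ot f)\ind_{\Delta_{2d}^c}\|^2$. The right-hand side is exactly your collision mass, so no term-by-term comparison is needed.
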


\begin{proof}
Assume $F=Q_d(h;Y)$ for some $h\in\fH_0^{\odot d}$ as in \eqref{Rad_chaos}.
Let $\Delta_p = \{ (i_1, \ldots, i_p)\in\N^p: i_k \neq i_j$ for different $k, j\}$ as in \cite[(1.3)]{Zhe19}.
Then, we can first write 

\noi
\begin{align*}
A_{2d} &= (2d)! \| h \sot h \ind_{\Delta_{2d}} \|^2_{\fH^{\otimes 2d}}
 =  (2d)! \| h \sot h \|^2_{\fH^{\otimes 2d}}
- (2d)! \| h \sot h \ind_{\Delta_{2d}^c} \|^2_{\fH^{\otimes 2d}} \\
&\geq 2 m_2(F)^2 -  (2d)! \| h \sot h \ind_{\Delta_{2d}^c} \|^2_{\fH^{\otimes 2d}}, 
\end{align*}
where the last inequality follows from the same argument as in Lemma \ref{lem_P_top}.

For the second term, we deduce from  \cite[(3.3)]{Zhe19}
that 
\begin{align*}
  \| h \sot h \ind_{\Delta_{2d}^c} \|^2_{\fH^{\otimes 2d}}
&\leq \sum_{r=1}^d r! \binom{d}{r}^2 \|h\|^2_{\fH^{\otimes d}} \cM(h) 
\leq  \binom{2d}{d} m_2(F) \cM(F).
\end{align*}
Hence, the desired bound \eqref{R_top} follows immediately. 
\qedhere

\end{proof}

The following two  consequences of Theorem~\ref{thm_master} 
are immediate.

\begin{corollary}[\textsf{Four-moment bound on Poisson chaos}]
\label{cor_P_quant}
Fix $q\in\N$ and 
let $F\in L^4(\PP)$ be in the $q$-th Poisson chaos
such that $F$ satisfies
 the lattice condition \eqref{cond_La}
with shift $\theta>0$.  
Then,
$\cP(F)\geq0$ and with $c_\theta:= (1-e^{-\theta})/\theta$,

\noi
\begin{align}
 &\dTV\bigl( F+\theta, P_\theta\bigr)
\leq c_\theta\bigg[
 |m_2(F)-\theta|
 +\frac{2q-1}{2q}\sqrt{\cP(F)}
 +\frac23\sqrt{m_2(F)\cP(F)}
 +\frac43\cP(F)
 \bigg].
\notag
\end{align}
\end{corollary}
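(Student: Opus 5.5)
The plan is to obtain Corollary~\ref{cor_P_quant} as a direct specialization of Theorem~\ref{thm_master} with the choice $\rho(F)=0$. The first step is to check the hypothesis \eqref{top_rho} in the Poisson setting. Lemma~\ref{lem_P_top} gives
\[
\|J_{2q}(F^2)\|_2^2\geq 2m_2(F)^2,
\]
so \eqref{top_rho} holds with $\rho(F)=0$. Next I will show $\cP(F)\geq0$. By the identity \eqref{D_ide}, with $\cD=\cP$, we have
\[
\cP(F)=\bigl(\|J_{2q}(F^2)\|_2^2-2m_2(F)^2\bigr)+\fE_q(F).
\]
The first bracket is nonnegative by Lemma~\ref{lem_P_top}. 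The remainder $\fE_q(F)$ in \eqref{E_abs} is a sum of squared $L^2$-norms, hence also nonnegative. Therefore $\cP(F)\geq0$, and consequently $\mathfrak d(F)=|\cP(F)|+0=\cP(F)$.

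With $\rho(F)=0$ and $\cD(F)=\cP(F)\geq0$, we are exactly in the special case of Theorem~\ref{thm_master} in which the constant $2$ in front of $\mathfrak d(F)$ in \eqref{master_TV} improves to $\tfrac43$. The lattice condition with shift $\theta>0$ is assumed in the corollary, so \eqref{master_TV} applies with $r=q$. Substituting $\mathfrak d(F)=\cP(F)$ gives the displayed bound term by term: the variance mismatch, $\tfrac{2q-1}{2q}\sqrt{\cP(F)}$ from \eqref{T_master}, $\tfrac23\sqrt{m_2(F)\cP(F)}$, and $\tfrac43\cP(F)$.

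There is no real obstacle here; the only point to check is that the improved constant applies. It comes from the sharper $\cJ$ bound $4\sqrt{m_2\cD}+8\cD$, which after division by $6$ yields $\tfrac23\sqrt{m_2\cD}+\tfrac43\cD$. That sharper bound requires $\mathcal Q(F)\geq0$, which is precisely Lemma~\ref{lem_P_top}.
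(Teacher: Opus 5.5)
Your proposal is correct and follows the paper's own proof: you get $\cP(F)\geq0$ from \eqref{D_ide} together with Lemma~\ref{lem_P_top}, and then apply Theorem~\ref{thm_master} with $\rho(F)=0$ and $\cD(F)=\cP(F)$, using its sharper constant $\frac43$. Your check that this constant comes from the improved bound $\cJ(F)\leq 4\sqrt{m_2(F)\cP(F)}+8\cP(F)$, divided by $6$, is exactly the point the paper relies on.
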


\begin{proof}
Note that  Lemma~\ref{lem_P_top} and
the identity \eqref{D_ide} imply $\cP(F)\geq0$.
Then, the proof is completed by 
invoking  Theorem~\ref{thm_master} with
$\rho(F)=0$ and $\cD(F)=\cP(F)$.    
\qedhere

\end{proof}

\begin{corollary}[\textsf{Four-moment-influence bound on Rademacher chaos}]
\label{cor_R_quant}
Let $F\in L^4(\PP)$ belong to the $d$-th Rademacher chaos
and satisfy  the lattice condition \eqref{cond_La}
with shift $\theta>0$.  
Put
\noi
\begin{align}
 \mathfrak d(F)
 :=|\cR(F)|
 + (2d)!\binom{2d}{d}m_2(F)\cM(F)
\notag
\end{align}
with $\cM(F)$ the maximal influence as in \eqref{Rad_inf}.
Then, with   $c_\theta:= (1-e^{-\theta})/\theta$,

\noi
\begin{align}
 &\dTV\bigl( F+\theta,P_\theta\bigr)
  \leq c_\theta\bigg[
 |m_2(F)-\theta|
 +\frac{2d-1}{2d}\sqrt{\mathfrak{d}(F)}
 +\frac23\sqrt{m_2(F)\mathfrak{d}(F)}
 +2\mathfrak{d}(F)
 \bigg].
\notag\end{align}
\end{corollary}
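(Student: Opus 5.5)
This corollary follows from the master bound \eqref{master_TV} of Theorem~\ref{thm_master} once we supply an admissible collision term $\rho(F)$ for the Rademacher chaos of order $r=d$. I take
\[
 \rho(F):=(2d)!\binom{2d}{d}m_2(F)\cM(F)\geq0 .
\]
Lemma~\ref{lem_R_top} is exactly the statement that $\|J_{2d}(F^2)\|_2^2\geq 2m_2(F)^2-\rho(F)$. So the hypothesis \eqref{top_rho} holds with this $\rho$.

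With $\cD(F)=\cR(F)$ as in \eqref{def_CD}, the quantity in \eqref{frak_d} becomes $\mathfrak d(F)=|\cR(F)|+\rho(F)$. This is precisely the $\mathfrak d(F)$ defined in the corollary. Since $\cR(F)$ may be negative, we cannot use the sharper constant $\frac43$. We are in the general case of Theorem~\ref{thm_master}, where the last coefficient is $2$.

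Next I check the hypotheses needed for the total-variation part of Theorem~\ref{thm_master}:
\begin{itemize}
\item $F\in L^4(\PP)$ lies in the $d$-th Rademacher chaos, so $F^2\in\bigoplus_{j\le 2d}\cC_j^{\mathsf R}$ (Section~\ref{SEC_23}), and all spectral quantities are defined;
\item the exchangeable pair of Proposition~\ref{prop_R_pair} gives exact regression $a=1-e^{-dt}$, $R=0$;
\item the lattice condition with $\theta>0$ ensures $D_t\in\Z$, which is what Lemma~\ref{lem_Stein} requires.
\end{itemize}

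Then \eqref{master_TV} with $r=d$ reads exactly as the claimed inequality. The only point needing care is that Theorem~\ref{thm_master} was stated for either setting, so the Rademacher versions of \eqref{cond_master} and Proposition~\ref{prop_J} are already covered. There is no real obstacle here; the substantive work sits in Lemma~\ref{lem_R_top}, which we take as given.
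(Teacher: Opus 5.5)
Your proposal is correct and is essentially the paper's own proof. Take $\rho(F)=(2d)!\binom{2d}{d}m_2(F)\cM(F)$ via Lemma~\ref{lem_R_top}, set $\cD=\cR$, and read off \eqref{master_TV} with $r=d$ in its general form with coefficient $2$, since neither $\rho(F)=0$ nor $\cR(F)\geq0$ is available.
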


\begin{proof}
We can apply 
  Lemma~\ref{lem_R_top} and Theorem~\ref{thm_master} with
$ \rho(F)= (2d)!\binom{2d}{d}m_2(F)\cM(F)$
and $\cD(F)=\cR(F)$.
\qedhere

\end{proof}

Finally, we present the proofs of 
Theorem~\ref{thm_P}
and 
Theorem~\ref{thm_R}.

\begin{proof}[Proof of Theorem~\ref{thm_P}]
Let us first prove the equivalence between the assertions (i) and (ii). 
By Lemma~\ref{lem_raw}, assertion \textup{(i)} implies  (ii).  
Conversely, \eqref{D_ide} and
Lemma~\ref{lem_P_top} give
\noi
\begin{align}
 \fE_q(F_n)\leq\cP(F_n).
\notag
\end{align}
Thus \eqref{m3_master} yields $m_3(F_n)\to\lambda$, and solving
\eqref{def_P} for $m_4(F_n)$ yields
$m_4(F_n)\to3\lambda^2+\lambda$. 
Therefore, (i) follows from   Lemma~\ref{lem_raw}.

Now we have proved the    equivalence between the assertions (i) and (ii). 
Assume either (i) or (ii). 
Then,
Corollary~\ref{cor_P_quant} and \eqref{Poi_param} imply
the  total-variation convergence 
$F_n + \theta_n \to P_\lambda$,
which leads to the convergence in law
\[
(F_n + \theta_n)^4\xrightarrow{\rm law} P_\lambda^4
\]
by continuous mapping theorem. 
Since we have $\E[(F_n+\theta_n)^4]\to\E[P_\lambda^4]$,
 Lemma~\ref{lem_UI} therefore yields
uniform integrability of $\{ |F_n+\theta_n|^4\}_{n\geq 1}$.
That is, assertion (iii) is proved. 
Conversely, total-variation convergence and
uniform integrability of the fourth powers 
imply convergence of all
moments of order at most four. This proves (i),
and hence completes the proof. 
\qedhere

\end{proof}

\begin{proof}[Proof of Theorem~\ref{thm_R}]
Lemma \ref{lem_raw} shows that  (i) implies (ii).
  Under \eqref{M_zero}, Lemma~\ref{lem_R_top} and
\eqref{D_ide} give

\noi
\begin{align}
 \mathfrak{d}(F_n)
 \leq|\cR(F_n)|
 + (2d)!\binom{2d}{d}m_2(F_n) \cM(F_n)
 \to0.
\notag\end{align}
The bounds \eqref{m3_master} and \eqref{def_R} then give the 
third-moment and fourth-moment limits, so \textup{(ii)} implies
\textup{(i)}.  Corollary~\ref{cor_R_quant}, followed by the same
uniform-integrability argument as in the proof of Theorem~\ref{thm_P}, 
proves the remaining equivalences.
\end{proof}

\subsection{Lower-order perturbations in the two settings}
\label{SEC_33}

For a centered random variable
$H=\sum_{j=1}^rH_j$, with $H_j$ in the $j$-th chaos of either the
Poisson or Rademacher space, we define

\noi
\begin{align}
\begin{aligned}
 \cJ_r(H)
 :&=\frac1r\big(6\E[H^2L(H^2)]+2\E[HLH]-8\E[H^3LH]\big).
\end{aligned}
 \label{Jq_def}
\end{align}
For a pure $r$-th chaos variable, we have
$\cJ_r(F)=\cJ(F)$ whenever the latter is defined.

The following estimates make the perturbation argument quantitative.

\begin{lemma}
\label{lem_pert_est}
Fix $\mathsf{S}\in\{\mathsf{P},\mathsf{R}\}$.  Let
$U\in\cC_j^{\mathsf{S}}\cap L^4(\PP)$ and
$V\in\cC_k^{\mathsf{S}}\cap L^4(\PP)$.  Then,
\noi
\begin{align}
 \norm{\Gamma(U,V)}_2
 \leq\frac{j+k}{2}\norm{U}_4\norm{V}_4.
 \label{Gamma_L4}
\end{align}

Let $H=\sum_{j=1}^rH_j$, with
$H_j\in\cC_j^{\mathsf{S}}\cap L^4(\PP)$, and let $H_t$ be obtained
from the thinning coupling when $\mathsf{S}=\mathsf{P}$ and from the
coordinate-refresh coupling when $\mathsf{S}=\mathsf{R}$.  Without
any lattice assumption,
\noi
\begin{align}
 \cJ_r(H)
 =\lim_{t\downarrow0}
 \frac{\E[(H_t-H)^2\{(H_t-H)^2-1\}]}
 {1-e^{-rt}}.
 \label{Jq_limit}
\end{align}
If $H$ satisfies the lattice condition \eqref{cond_La} with shift
$\theta>0$, then $\cJ_r(H)\geq0$.

If $F\in\cC_r^{\mathsf{S}}\cap L^4(\PP)$ and
$B=\sum_{j=1}^{r-1}B_j$, with
$B_j\in\cC_j^{\mathsf{S}}\cap L^4(\PP)$, then
\noi
\begin{align}
 \abs{\cJ_r(F+B)-\cJ_r(F)}
 \leq84\bigl(1+\norm{F}_4+\eps\bigr)^3\eps,
 \label{Jq_cont}
\end{align}
where
$\eps:=\sum_{1\leq j<r}\norm{B_j}_4$.

Assume that a nonnegative quantity $\rho(F)$ satisfies
\eqref{top_rho}, and put
$\mathfrak d(F):=\abs{\cD(F)}+\rho(F)$ as in
\eqref{frak_d}.  Then, without any lattice assumption,
\noi
\begin{align}
 \cJ_r(F)
 \leq4\sqrt{m_2(F)\mathfrak d(F)}+12\mathfrak d(F)
 \label{Jr_pure_up}
\end{align}
and
\begin{align}
 |\cJ_r(F)|
 \leq
 4\sqrt{m_2(F)\mathfrak d(F)}
 +16\mathfrak d(F).
 \label{Jq_pure}
\end{align}

In the Poisson setting, one has the sharper estimates
\noi
\begin{align}
 \cJ_r(F)
 &\leq4\sqrt{m_2(F)\cP(F)}+8\cP(F).
 \label{Jq_pure_up}
\end{align}
\end{lemma}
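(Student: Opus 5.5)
Write $\Gamma$ for the carr\'e du champ and use the generator identities throughout. The lemma has four parts, and I will prove them in order.

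\textbf{Bound \eqref{Gamma_L4}.} Write $2\Gamma(U,V)=L(UV)+(j+k)UV$. The product $UV$ lies in $\bigoplus_{q\le j+k}\cC_q^{\mathsf S}$; the closure argument of Sections \ref{SEC_22}--\ref{SEC_23}, used there for $F^2$, applies equally to $UV$. On this finite sum the operator $L+(j+k)$ acts diagonally with multipliers $j+k-q\in[0,j+k]$. Hence
\[
\|2\Gamma(U,V)\|_2\le (j+k)\|UV\|_2\le (j+k)\|U\|_4\|V\|_4
\]
by Cauchy--Schwarz.

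\textbf{Limit \eqref{Jq_limit}.} I repeat the semigroup computation from \eqref{m2Dt}--\eqref{m4Dt}, now for $H$ in a finite sum of chaoses. Exchangeability and the Markov property give
\[
\E[(H_t-H)^2]=2\E[H^2]-2\E[HP_tH]
\]
and
\[
\E[(H_t-H)^4]=2\E[H^4]-8\E[H^3P_tH]+6\E[H^2P_t(H^2)].
\]
The terms $H^3$ and $H^2$ lie in finite chaos sums, by the same closure argument. So $P_t=I+tL+o(t)$ holds in $L^2$ on these finite-dimensional spectral ranges. Dividing by $1-e^{-rt}\sim rt$ yields exactly the expression \eqref{Jq_def}. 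Under the lattice condition, $H_t+\theta\in\N_0$, so $D\in\Z$ and the integrand is nonnegative, giving $\cJ_r(H)\ge0$.

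\textbf{Continuity \eqref{Jq_cont}.} Since $\E[GLK]=-\E[\Gamma(G,K)]$ and $\E[H^2L(H^2)]=-\E[\Gamma(H^2,H^2)]$, I rewrite
\[
r\,\cJ_r(H)=8\E[H^3LH]\cdot(-1)+6\E[H^2L(H^2)]+2\E[HLH].
\]
It is cleaner to use the chain rule only through $L$ applied to single chaoses: $LH=-\sum_j jH_j$. Then
\[
\E[H^3LH]=-\sum_j j\,\E[H^3H_j],\qquad \E[HLH]=-\sum_j j\|H_j\|_2^2.
\]
For the remaining term, $\E[H^2L(H^2)]=-\sum_q q\|J_q(H^2)\|_2^2$. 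Set $H=F+B$. Each term is multilinear in $(F,B)$, so I expand and subtract the pure-$F$ terms. Every remaining monomial contains at least one factor of $B$.
\begin{itemize}
\item For the $H^3LH$ term, Hölder gives at most $r\cdot 4(\|F\|_4+\eps)^3\eps$ (roughly).
\item For the $HLH$ term, the bound is $O(r(\|F\|_4+\eps)\eps)$.
\item For $\E[H^2L(H^2)]$, write $H^2-F^2=2FB+B^2=:W$ with $\|W\|_2\le(2\|F\|_4+\eps)\eps$. Then $-\E[H^2LH^2]+\E[F^2LF^2]=\E[W(-L)(2F^2+W)]$. Since $-L\le 2r$ on $\bigoplus_{q\le2r}$, this is at most $2r\|W\|_2(2\|F\|_4^2+\|W\|_2)$.
\end{itemize}
Dividing by $r$ and collecting, everything is bounded by a constant times $(1+\|F\|_4+\eps)^3\eps$. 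The main obstacle is bookkeeping of the numerical constant: 8 times (up to) $r\cdot$(about 7 cross terms) plus the others, divided by $r$. I would check that the total stays below $84$, crudely bounding every polynomial in $\|F\|_4,\eps$ by $(1+\|F\|_4+\eps)^3$.

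\textbf{Pure-chaos bounds \eqref{Jr_pure_up}--\eqref{Jq_pure_up}.} For pure $F$, $\cJ_r(F)=\cJ(F)$ by \eqref{J_sp}, so \eqref{Jr_pure_up} is \eqref{J_master}, and \eqref{Jq_pure_up} is its sharper Poisson version using Lemma \ref{lem_P_top}. For the lower bound in \eqref{Jq_pure}, I use the identity from the proof of Theorem \ref{thm_master}:
\[
\cJ=8\cD-12\mathcal Q+4(m_3-m_2)-\mathcal N_r.
\]
First, $\mathcal N_r\le 6\fE_r\cdot\max(1,(2r-1)/r)\le12\fE_r\le12\mathfrak d$. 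Second, $\mathcal Q=\cD-\fE_r\le|\cD|$, so $-12\mathcal Q\ge-12|\cD|$. Together with $8\cD\ge-8|\cD|$ and \eqref{m3_master}, this gives
\[
\cJ\ge-4\sqrt{m_2\mathfrak d}-8|\cD|-12|\cD|-12\fE_r+12\fE_r\cdot0.
\]
More carefully, substituting $\mathcal Q=\cD-\fE_r$ gives $\cJ=-4\cD+12\fE_r+4(m_3-m_2)-\mathcal N_r$. Since $12\fE_r-\mathcal N_r\ge0$, this yields $\cJ\ge-4|\cD|-4\sqrt{m_2\mathfrak d}$, which is well within $16\mathfrak d$.
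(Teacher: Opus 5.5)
Your proposal is correct and follows the paper's proof essentially step for step: the same diagonal action of $L+(j+k)$ on $UV$, the same semigroup differentiation, H\"older bounds that tally exactly to $6\cdot 8+8\cdot 4+2\cdot 2=84$, and the same identity $\cJ_r(F)=-4\cD(F)+12\fE_r(F)+4[m_3(F)-m_2(F)]-\mathcal N_r(F)$ with $0\le\mathcal N_r(F)\le 12\fE_r(F)$. Your one-sided use of that identity, combined with \eqref{Jr_pure_up}, even gives $12\mathfrak d(F)$ in place of $16\mathfrak d(F)$ in \eqref{Jq_pure}. The only slip is the claim that $H^3$ lies in a finite chaos sum, which need not hold (it need not even be in $L^2$). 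That claim is unnecessary, since $\E[H^3P_tH]=\sum_j e^{-jt}\E[H^3H_j]$ can be differentiated directly by letting $P_t$ act on $H$ alone.
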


\begin{proof}
The product $UV$ belongs to the sum of the first $j+k$ chaoses in
both settings.  Since
\noi
\begin{align*}
 \Gamma(U,V)
 =\frac12\big[L(UV)+(j+k)UV\big]
 =\frac12\sum_{\ell=0}^{j+k}(j+k-\ell)J_\ell(UV),
\end{align*}
orthogonality and H\"older's inequality give \eqref{Gamma_L4}.

Let $P_t$ be the corresponding Ornstein--Uhlenbeck semigroup.  As in
\eqref{m2Dt}--\eqref{m4Dt},
\noi
\begin{align*}
 \E[(H_t-H)^2]
 &=2\big(\E[H^2]-\E[HP_tH]\big),\\
 \E[(H_t-H)^4]
 &=2\E[H^4]-8\E[H^3P_tH]+6\E[H^2P_t(H^2)].
\end{align*}
Differentiating at zero and using $1-e^{-rt}\sim rt$ proves
\eqref{Jq_limit}.  If $H+\theta$ is integer-valued, then so is
$H_t+\theta$, and hence
\[
 (H_t-H)^2\big[(H_t-H)^2-1\big]\geq0.
\]
This proves the asserted nonnegativity.

For the continuity estimate, put $H=F+B$.  Since
$\norm{B}_4\leq\eps$ and $\norm{LB}_4\leq r\eps$, one has
\noi
\begin{align}\label{est11}
\begin{aligned}
 \norm{H^3-F^3}_{4/3}
 &\leq3(\norm{F}_4+\eps)^2\eps,
 \\
 \norm{L(H^2)-L(F^2)}_2
 &\leq2r(2\norm{F}_4+\eps)\eps,
 \\
 \norm{L(H^2)}_2
 &\leq2r(\norm{F}_4+\eps)^2.
\end{aligned}
\end{align}
Here we used that $H^2$ and $F^2$ belong to the sum of the first
$2r$ chaoses.  From \eqref{Jq_def},
\noi
\begin{align*}
 r\big(\cJ_r(H)-\cJ_r(F)\big)
 &={}
 6\E\big[H^2L(H^2)-F^2L(F^2)\big]\\
 &\quad+2\E\big[HLH-FLF\big]
 -8\E\big[H^3LH-F^3LF\big].
\end{align*}
The estimates in \eqref{est11} yield
\noi
\begin{align*}
 \abs{\E\big[H^2L(H^2)-F^2L(F^2)\big]}
 &\leq8r(\norm{F}_4+\eps)^3\eps,\\
 \abs{\E[HLH-FLF]}
 &\leq r(2\norm{F}_4+\eps)\eps,\\
 \abs{\E[H^3LH-F^3LF]}
 &\leq4r(\norm{F}_4+\eps)^3\eps.
\end{align*}
Combining them gives \eqref{Jq_cont}.

For a pure $r$-th chaos, $\cJ_r(F)=\cJ(F)$.  Therefore
\eqref{Jr_pure_up} is precisely \eqref{J_master}, whose proof is
purely spectral and uses no lattice assumption.  In the Poisson
setting, $\rho(F)=0$ and $\cD(F)=\cP(F)$, so the sharper part of
Theorem~\ref{thm_master} gives \eqref{Jq_pure_up}.

It remains to prove the absolute estimate \eqref{Jq_pure}.  Put
\[
 \mathcal Q(F)
 :=\norm{J_{2r}(F^2)}_2^2-2m_2(F)^2
\]
and define $\mathcal N_r(F)$ as in the proof of
Theorem~\ref{thm_master}.  By \eqref{D_ide},
\[
 \fE_r(F)=\cD(F)-\mathcal Q(F).
\]
Hence the same exact identity can be rewritten as
\noi
\begin{align}
 \cJ_r(F)
 &=-4\cD(F)+12\fE_r(F)
 +4\big[m_3(F)-m_2(F)\big]-\mathcal N_r(F).
\end{align}
By the definition of $\mathcal N_r(F)$ and \eqref{E_abs},
\noi
\begin{align}
 0\leq\mathcal N_r(F)\leq12\fE_r(F).
\end{align}
Therefore,
\noi
\begin{align*}
 \abs{\cJ_r(F)}
 &\leq
 4\abs{\cD(F)}
 +12\fE_r(F)
 +4\abs{m_3(F)-m_2(F)}.
\end{align*}
Using \eqref{E_master}, \eqref{m3_master}, and
$\abs{\cD(F)}\leq\mathfrak d(F)$, we obtain
\noi
\begin{align*}
 \abs{\cJ_r(F)}
 &\leq
 4\sqrt{m_2(F)\mathfrak d(F)}
 +16\mathfrak d(F),
\end{align*}
which proves \eqref{Jq_pure}.
\qedhere

\end{proof}

\begin{proof}[Proof of Theorem~\ref{thm_pert}]
Fix $n$ with $\theta_n>0$, suppress the index $n$, and put
$H:=F+B$, $X:=\theta+H$, $D_t:=H_t-H$, and
$a_t:=1-e^{-rt}$, where $H_t$ is obtained from the common coupling in
the relevant space.  Let $\mathcal G_0$ be the sigma-field generated
by the underlying Poisson process or Rademacher sequence.

The exact regressions on the individual chaoses give
\noi
\begin{align}
 \E[D_t | \mathcal{G}_0]
 =-a_tH+R_t,
 \quad
 R_t:=\sum_{j=1}^{r-1}
 \big[a_t-(1-e^{-jt})\big]B_j.
\end{align}
Since $H=X-\theta$ is $\sigma\{X\}$-measurable, the tower property
yields
\noi
\begin{align}
 \E[D_t | X]
 =-a_t(X-\theta)+\widetilde{R}_t,
 \qquad
 \widetilde{R}_t:=\E[R_t  | X].
 \label{pert_wtR}
\end{align}
For $j<r$,
$0\leq a_t-(1-e^{-jt})=e^{-jt}-e^{-rt}\leq a_t$; hence
conditional Jensen's inequality gives
\noi
\begin{align}
 \frac{\E | \widetilde R_t|}{a_t}
 \leq\frac{\E\abs{R_t}}{a_t}
 \leq\sum_{j=1}^{r-1}\norm{B_j}_1
 \leq\eps,
 \label{pert_R_bdd}
\end{align}
where $\eps:=\sum_{j=1}^{r-1}\norm{B_j}_4$.

By \eqref{P_cross_reg} or \eqref{R_cross_reg},
\noi
\begin{align}
 A_t
 :=\frac{\E[D_t^2 | \mathcal{G}_0]}{2a_t}
 \xrightarrow{t\downarrow0}\frac1r\Gamma(H,H)
 \qquad\text{in }L^2(\PP).
\end{align}
Taking conditional expectations with respect to $X$ and using Jensen's
inequality, we obtain
\noi
\begin{align}
 \limsup_{t\downarrow0}
 \E\abs{
 \frac{\E[D_t^2\mid X]}{2a_t}-\frac{X+\theta}{2}}
 \leq
 \norm{\frac1r\Gamma(H,H)-\frac H2-\theta}_1.
 \label{pert_cond}
\end{align}
Expanding
$\Gamma(H,H)=\Gamma(F,F)+2\Gamma(F,B)+\Gamma(B,B)$ and applying
\eqref{Gamma_L4} term by term gives
\noi
\begin{align}
 \norm{\frac1r\Gamma(H,H)-\frac H2-\theta}_1
 &\leq
 \abs{m_2(F)-\theta}
 +\norm{\cT_F}_2
 +2\norm{F}_4\eps+\eps^2+\frac\eps2.
 \label{pert_cond_bdd}
\end{align}

Since $X=H+\theta\in\N_0$ and $H_t+\theta$ has the same law as
$H+\theta$, one has $D_t\in\Z$.  Lemma~\ref{lem_pert_est} therefore
yields
\noi
\begin{align}
 \lim_{t\downarrow0}
 \frac{\E[D_t^2(D_t^2-1)]}{a_t}
 =\cJ_r(H)\geq0.
 \label{pert_jump}
\end{align}
Applying Lemma~\ref{lem_Stein} to
$(X,X_t)=(H+\theta,H_t+\theta)$, using
\eqref{pert_R_bdd}, \eqref{pert_cond}, \eqref{pert_cond_bdd}, and
\eqref{pert_jump}, and then letting $t\downarrow0$, gives

\noi
\begin{align}
 \dTV(X,P_\theta)
 &\leq
 c_\theta\bigg[
 \abs{m_2(F)-\theta}
 +\norm{\cT_F}_2
 +2\norm{F}_4\eps+\eps^2+\frac\eps2
 +\frac{\cJ_r(F+B)}6
 \bigg]
 \notag\\
 &\qquad\qquad+
 \left(1+\frac{c_\theta}{2}\right)\eps.
 \label{pert_quant}
\end{align}

In the Poisson setting take $\rho(F)=0$; in the Rademacher setting take
\[
 \rho(F)=(2r)!\binom{2r}{r}m_2(F)\cM(F).
\]
Theorem~\ref{thm_master}, Lemmas~\ref{lem_P_top}--\ref{lem_R_top},
and Lemma~\ref{lem_pert_est} then give, with
$\mathfrak d=\mathfrak d_{\mathsf S}(F)$,
\[
 \norm{\cT_F}_2
 \leq\frac{2r-1}{2r}\sqrt{\mathfrak d}
\]
and
\noi
\begin{align}
 \frac{\cJ_r(F+B)}6
 &\leq
 \frac23\sqrt{m_2(F)\mathfrak d}
 +b_{\mathsf S}\mathfrak d
 +14\bigl(1+\norm{F}_4+\eps\bigr)^3\eps.
\end{align}
Substitution into \eqref{pert_quant} proves \eqref{pert_main_bdd} and
\eqref{pert_rem}.

It remains to prove the asymptotic assertions.  Exactly as in the
pure-chaos criteria, \eqref{m3_master} and \eqref{def_CD} yield
\noi
\begin{align}
 \abs{m_3(F_n)-m_2(F_n)}
 \leq\sqrt{m_2(F_n)\mathfrak d_{\mathsf S}(F_n)}
 \longrightarrow0,
\end{align}
and hence
\noi
\begin{align}
 m_4(F_n)
 &=3m_2(F_n)^2+2m_3(F_n)-m_2(F_n)+\cD(F_n)
\longrightarrow3\lambda^2+\lambda.
\end{align}
Thus, for some $n_0$,
\noi
\begin{align}
 \sup_{n\geq n_0}\norm{F_n}_4<\infty.
\end{align}
Since $\theta_n\to\lambda>0$ and $\eps_n\to0$, the remainder in
\eqref{pert_rem} tends to zero, and \eqref{pert_main_bdd} gives
\noi
\begin{align}
 \dTV(X_n,P_{\theta_n})\longrightarrow0.
\end{align}
Together with \eqref{Poi_param}, this implies
\noi
\begin{align}
 \dTV(X_n,P_\lambda)\longrightarrow0.
 \label{pert_TV_lambda}
\end{align}

Finally, $\norm{B_n}_4\leq\eps_n\to0$, so the first four moments of
$X_n=\theta_n+F_n+B_n$ have the same limits as those of
$\theta_n+F_n$.  Lemma~\ref{lem_raw} and the preceding centered-moment
limits give
\noi
\begin{align}
 \E[X_n^k]\longrightarrow\E[P_\lambda^k],
 \qquad1\leq k\leq4.
 \label{pert_mom}
\end{align}
Now \eqref{pert_TV_lambda} and \eqref{pert_mom}, followed by
Lemma~\ref{lem_UI} with $Y_n=X_n^4$, prove uniform integrability.
\end{proof}

\subsection{Higher-order and elementary applications}
\label{SEC_34}

\begin{proof}[Proof of Corollary~\ref{cor_hyper}]
Write
$
 Z_{n,i,j}:=N_{n,i,j}-\alpha_{n,i,j}.
 $
Expanding the product in \eqref{hyper_count} gives
\[
 X_n
 =\theta_n+F_n+\sum_{s=1}^{q-1}B_{n,s}
 \]
 with
\begin{align*}
 \theta_n
 :=\sum_{i=1}^{m_n}\beta_{n,i},
\quad
 F_n
 :=\sum_{i=1}^{m_n}\prod_{j=1}^qZ_{n,i,j},
\AND
 B_{n,s}
 :=\sum_{i=1}^{m_n}
 \sum_{\substack{S\subseteq\{1,\ldots,q\}\\|S|=s}}
 \left(\prod_{j\notin S}\alpha_{n,i,j}\right)
 \prod_{j\in S}Z_{n,i,j}.
\end{align*}
The independent Poisson variables $N_{n, i,j}$'s    may be
realized on disjoint sets of a Poisson random measure.  Hence $F_n$
belongs to the $q$-th chaos and $B_{n,s}$ to the $s$-th chaos.

For
$V_{n,i}:=\prod_{j=1}^qZ_{n,i,j}$, independence within a block and
the centered Poisson moments \eqref{center_m} give

\noi
\begin{align*}
 m_2(V_{n,i}) = m_3(V_{n,i})&=\beta_{n,i}
\AND
 m_4(V_{n,i}) =\beta_{n,i}  \prod_{j=1}^q(1+3\alpha_{n,i,j}).
\end{align*}

\noi
Since cumulants are additive over the independent blocks,
we have $ m_2(F_n) = \cum_2(F_n) =\sum_i\beta_{n,i}$
and

\noi
\begin{align*}
 \cP(F_n) & = \cum_4(F_n) - 2\cum_3(F_n) + \cum_2(F_n) \\
 &=\sum_{i=1}^{m_n}
 \left[ \beta_{n,i}
 \bigg[ - 1 + \prod_{j=1}^q(1+3\alpha_{n,i,j}) \bigg]
             -3\beta_{n,i}^2  \right].
\end{align*}
For all sufficiently large $n$, 
\begin{align}\label{large_n}
\delta_n\leq1  
\AND 
\sum_{i=1}^{m_n} \beta_{n,i} \leq 2\lambda.
\end{align}
Since
$\cP(F_n)\geq0$ by Lemma~\ref{lem_P_top}, and
\[
 \prod_{j=1}^q(1+3\alpha_{n,i,j})
 \leq (1 + 3\delta_n)^q 
 \leq 1 + (4^q-1)\delta_n,
\]

\noi
we obtain

\noi
\begin{align}  \label{hyper_P_bdd}
 0\leq\cP(F_n)
 \leq (4^q-1)\delta_n\sum_{i=1}^{m_n} \beta_{n,i}
 \leq  2(4^q-1) \lambda\delta_n
 \to0.
\end{align}

It remains to control the lower chaoses.  For $|S|=s$, put

\noi
\begin{align}
 U_{n,i,S}
 :=\left(\prod_{j\notin S}\alpha_{n,i,j}\right)
   \prod_{j\in S}Z_{n,i,j} 
\AND 
 W_{n,i,s}:=\sum_{|S|=s}U_{n,i,S}.
\notag
\end{align}
It is easy to see that
different subsets are orthogonal
with
$
\E [ U_{n,i, S}^2  ] = \beta_{n,i} \prod_{j\notin S} \alpha_{n,i,j}
$,
and 
thus,  
\begin{align}\label{bdd_w2}
 \E[W_{n,i,s}^2]
 \leq 2^q\beta_{n,i}\delta_n^{q-s}.
 \end{align}
 
 \noi
Moreover, when $\delta_n\leq1$,
$
\E[Z_{n,i,j}^4]
 =\alpha_{n,i,j}+3\alpha_{n,i,j}^2
 \leq4\alpha_{n,i,j}.
$
Using
$(\sum_{\ell=1}^M|x_\ell|)^4
 \leq M^3\sum_{\ell=1}^M|x_\ell|^4$, we get

\noi
\begin{align}\label{bdd_w4}
 \E[W_{n,i,s}^4]
& \leq \binom{q}{s}^3 \sum_{|S|=s} \E[U_{n,i, S}^4]
\leq 2^{3q} 4^s\sum_{|S|=s}  \bigg(\prod_{j\notin S} \alpha_{n,i,j}^4\bigg)\prod_{j\in S} \alpha_{n, i, j}
  \notag   \\
 &\leq 2^{6q}\beta_{n,i}\delta_n^{3(q-s)}.
\end{align}

\noi
The random variables $W_{n,i,s}$ are centered and independent over $i$.
Consequently, we deduce from \eqref{bdd_w2}
and \eqref{bdd_w4} that  with \eqref{large_n} for sufficiently large $n$,

\noi
\begin{align}
 \E[B_{n,s}^4] 
 &= \sum_{i=1}^{m_n}  \E[W_{n,i,s}^4]
 +  3\sum_{i\neq j}  \E[W_{n,i,s}^2] \E[W_{n,j,s}^2]  
  \notag \\
 &\leq 2^{6q}\bigg( \sum_{i=1}^{m_n}\beta_{n,i} \bigg) \delta_n^{3(q-s)} 
  + 3  \Bigg(  2^q \sum_{i=1}^{m_n}\beta_{n,i}\delta_n^{q-s} \Bigg)^2 \notag  \\
 &\leq   \big( 2^{6q+1}\lambda + 2^{2q+4}\lambda^2  \big) \delta_n^2
  \to0.
 \label{hyper_lower_bdd}
\end{align}
All hypotheses of Theorem~\ref{thm_pert} are satisfied.

We finally record the rate.  Since
$\theta_n=\sum_i\beta_{n,i}\to\lambda$, the quantities
$\theta_n$, $c_{\theta_n}$ and $\norm{F_n}_4$ are uniformly bounded
for all sufficiently large $n$, with $\theta_n$ also bounded away
from zero.  By \eqref{hyper_P_bdd}, the pure-chaos part on the
right-hand side of \eqref{pert_main_bdd} is bounded by
$C_{q,\lambda}\sqrt{\delta_n}$.  Taking fourth roots in
\eqref{hyper_lower_bdd} and summing over $s$ yields
$\eps_n\leq C_{q,\lambda}\sqrt{\delta_n}$, where the constant
$C_{q,\lambda}$ only depends on $q$ and $\lambda$ and may vary from
line to line in this proof.  Therefore,
$\mathfrak R_{\theta_n}(F_n,\eps_n)
\leq C_{q,\lambda}\sqrt{\delta_n}$, and
\eqref{pert_main_bdd} implies
$
 \dTV\bigl(X_n, P_{\theta_n} \bigr)
 \leq C_{q,\lambda}\sqrt{\delta_n}.
$
Combining this with \eqref{Poi_param}, we can  obtain
\eqref{hyper_rate}.
\qedhere

\end{proof}

\begin{proof}[Proof of Corollary~\ref{cor_small}]
Put
$F_n=X_n-\theta_n =\sum_{k=1}^{N_n}(B_{n,k}-p_{n,k})$,
which belongs to the first Rademacher chaos. 
By direct computations, we have

\noi
\begin{align*}
 m_2(F_n)-\theta_n
 =-\sum_kp_{n,k}^2\to0
 \AND
 \cM(F_n)=\max_kp_{n,k}(1-p_{n,k})\to0.
\end{align*}

\noi
  For a centered Bernoulli
variable $B-p$, the cumulants of orders two, three, and four 
are
\[
 p(1-p),
 \qquad p(1-p)(1-2p),
 \qquad p(1-p)\big[ 1-6p(1-p)\big].
\]
Cumulant additivity under independent sums gives

\noi
\begin{align*}
 \cR(F_n) 
 &= \cum_4(F_n)  - 2\cum_3(F_n) + \cum_2(F_n)  \\
 &=2\sum_kp_{n,k}^2(1-p_{n,k})(3p_{n,k}-1),
\end{align*}
which  is bounded by
$8(\max_kp_{n,k})\theta_n$.  
Then, we can conclude the proof by Theorem~\ref{thm_R}.
\qedhere

\end{proof}

\begin{proof}[Proof of Corollary~\ref{cor_blocks}]
For a centered random variable $F$, the moment defect 
$
 \cD(F)=\cum_4(F)-2\cum_3(F)+\cum_2(F)
$
is additive under independent sums. 
As a result, 
$
 \cD(F_n)=\sum_k\cD(F_{n,k}).
 $
Then, 
the Poisson assertion follows from Theorem~\ref{thm_P} with $\cD = \cP$.  In the
Rademacher case, disjoint coordinate blocks imply that the maximal
influence of the sum is the maximum of the block-coordinate
influences; then 
the Rademacher assertion follows from Theorem~\ref{thm_R} with $\cD = \cR$.
\qedhere

\end{proof}

\section{Further examples and proofs}
\label{SEC4}

\subsection{An exact quadratic counterexample}
\label{SEC_41}

\begin{proposition}[\textsf{Exact four-moment counterexample}]
\label{prop_counter}
Let $\eps_1, \eps_2, \eps_3$, $\eps_a$, $\eps_b$, $\eps_c$, and
$\eps_d$   be i.i.d. symmetric
Rademacher random variables. 
Define
\[
 Q_\triangle
 :=\eps_1\eps_2
 +\eps_1\eps_3
 +\eps_2\eps_3
 \AND
  Q_\square
 :=\eps_a\eps_b
 +\eps_b\eps_c
 +\eps_c\eps_d
 +\eps_d\eps_a.
\]
Take four independent copies of $Q_\triangle$ and three independent
copies of $Q_\square$, and set their independent sums to be
\noi
\begin{align}
 F:=\sum_{j=1}^4Q_{\triangle,j}
   +\sum_{j=1}^3Q_{\square,j}.
 \notag
\end{align}
Then, $F$ belongs to the second Rademacher chaos
and satisfies

\noi
\begin{align}
 \frac{F+24}{2}\in\N_0
 \quad\text{and}\quad
 \E[(F+24)^k]=\E[P_{24}^k],
 \qquad 1\leq k\leq4,
  \notag
\end{align}
where $P_{24}\sim\Pois(24)$.  Moreover,

\noi
\begin{align}
 \dTV\bigl(\cL(F+24),\Pois(24)\bigr)
 \geq\frac{1-e^{-48}}2.
 \notag
\end{align}
\end{proposition}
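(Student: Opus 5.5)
The plan is to make everything explicit: compute the law of each building block, read off its first four cumulants, and use additivity of cumulants under independent sums to match the Poisson moments. The total-variation lower bound will then come from a parity obstruction.

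\emph{Step 1: membership in the second chaos.} Both $Q_\triangle$ and $Q_\square$ are homogeneous sums $\sum_{i\neq j} f(i,j)\eps_i\eps_j$ with symmetric kernels that vanish on the diagonal. Since the seven copies use disjoint sets of i.i.d.\ symmetric Rademacher variables, $F=Q_2(f;Y)$ for a block-diagonal kernel $f\in\fH_0^{\odot2}$. Here the $Y_k$ are the signs themselves, because $p_k=q_k=\frac12$ in \eqref{Rad_norm}.

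\emph{Step 2: explicit laws of the blocks.} For the triangle, put $s:=\eps_1+\eps_2+\eps_3$. Then $s^2=3+2Q_\triangle$, so $Q_\triangle=(s^2-3)/2$. Since $s\in\{\pm1\}$ with probability $\frac34$ and $s\in\{\pm3\}$ with probability $\frac14$, we get
\[
Q_\triangle=-1+4\,\mathrm{B},\qquad \mathrm{B}\sim\text{Bernoulli}(1/4).
\]
For the square, the factorization $Q_\square=(\eps_a+\eps_c)(\eps_b+\eps_d)$ is a product of two independent variables, each taking the values $\pm2$ with probability $\frac14$ each and $0$ with probability $\frac12$. Hence $Q_\square$ is symmetric and takes the values $\pm4$ with probability $\frac18$ each and $0$ with probability $\frac34$.

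From these laws the centered cumulants of orders $2,3,4$ follow directly:
\begin{itemize}
\item For $Q_\triangle$, using the Bernoulli cumulants (as in the proof of Corollary~\ref{cor_small}) with scaling $4^k$, they are $3,\ 6,\ -6$.
\item For $Q_\square$, they are $4,\ 0,\ 64-3\cdot16=16$.
\end{itemize}

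\emph{Step 3: additivity and moments.} By additivity over the seven independent blocks,
\[
\cum_2(F)=4\cdot3+3\cdot4=24,\qquad \cum_3(F)=4\cdot6=24,\qquad \cum_4(F)=-24+48=24,
\]
while $\E[F+24]=24$. Thus $F+24$ has the same first four cumulants as $P_{24}$, since all Poisson cumulants equal $24$. The first four moments are polynomial functions of the first four cumulants, so they coincide with those of $P_{24}$; equivalently, apply Lemma~\ref{lem_raw} together with \eqref{center_m}.

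\emph{Step 4: lattice property and parity.} The lattice property and the parity obstruction both come from the explicit laws.
\begin{itemize}
\item Each $Q_\triangle\in\{-1,3\}$ is odd, and each $Q_\square\in\{0,\pm4\}$ is even. Hence $F$ is a sum of four odd and three even integers, so $F$ is even.
\item From the same laws, $F\geq 4(-1)+3(-4)=-16$. Therefore $(F+24)/2$ is a nonnegative integer.
\end{itemize}

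\emph{Step 5: total variation.} Take $A$ to be the set of odd integers in \eqref{def_TV}. Then $\PP(F+24\in A)=0$, whereas
\[
\PP(P_{24}\text{ odd})=\frac{1-e^{-48}}{2},
\]
which follows from $\E[(-1)^{P_\lambda}]=e^{-2\lambda}$ with $\lambda=24$. This gives the stated lower bound.

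There is no real obstacle. The only step needing care is Step 2: getting the block laws right, and in particular the sign of $\cum_4(Q_\triangle)=-6$. The whole construction rests on this negative fourth cumulant, balanced against the positive one of $Q_\square$, so I would double-check it via $m_4(Q_\triangle)=21$ and $m_2(Q_\triangle)^2=9$, which give $21-27=-6$.
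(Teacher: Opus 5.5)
Your proposal is correct and follows essentially the same route as the paper: explicit block laws, additivity of cumulants giving $\cum_2(F)=\cum_3(F)=\cum_4(F)=24$, the bound $F\geq-16$ together with parity for the lattice property, and the set of odd integers for the total-variation lower bound. Your derivations of the block laws, via $s^2=3+2Q_\triangle$ and the factorization $Q_\square=(\eps_a+\eps_c)(\eps_b+\eps_d)$, fill in details that the paper only asserts.
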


\begin{proof}
Note that $Q_\triangle$  is  a triangle edge sum,
whose
 distribution is given by 
 $
 \PP(Q_\triangle=3)=\frac14
 $
and
$\PP(Q_\triangle=-1)=\frac34$.
It is easy to compute that
$\cum_2(Q_\triangle)=3$,
$\cum_3(Q_\triangle)=6$,
$ \cum_4(Q_\triangle)=-6.$
Similarly, $ Q_\square$ is a  four-cycle edge sum
with 
$ \PP(Q_\square=4)
 =\PP(Q_\square=-4)=\frac18$
 and
$
 \PP(Q_\square=0)=\frac34$,
and hence
$ \cum_2(Q_\square)=4,$ 
$\cum_3(Q_\square)=0$,
$\cum_4(Q_\square)=16.$
Additivity of cumulants under independent sums
 gives
$
 \cum_2(F)=\cum_3(F)=\cum_4(F)=24.
$
Therefore,
 $m_2(F)=m_3(F)=24$,
$m_4(F)=3\cdot24^2+24$,
and
$\cR(F)=0.$
The  identities \eqref{raw_id} show that
$F+24$ has the first four moments of $\Pois(24)$.

On the other hand, each triangle block is at least $-1$ and each
four-cycle block is at least $-4$, so $F+24 \geq8$.  Moreover, $F$ is a
sum of twenty-four signs and is therefore even-valued.  Thus $F+24$ is
even-valued and
\[
 \dTV\bigl(\cL(F+24),\Pois(24)\bigr)
 \geq\PP\big(\Pois(24)\text{ is odd} \big)
=\frac{1-e^{-48}}{2},
\]
where the last equality 
follows from 
$\E[ (-1)^{P_\lambda} ] = e^{-2\lambda}$ and 
$\E[ (-1)^{P_\lambda} ] = \PP\big( P_\lambda\text{ is even} \big)
 -  \PP\big( P_\lambda\text{ is odd} \big)$
 with $ \PP\big( P_\lambda\text{ is even} \big)
 +  \PP\big( P_\lambda\text{ is odd} \big) =1$.
Hence, the proof   is completed. 
\qedhere

\end{proof}

\begin{remark}[\textsf{The unit-jump obstruction in Proposition \ref{prop_counter}}] 
\label{rem_useJ}
In the construction of $F$, there are four triangles and three four-cycles. 
Note that every vertex of their disjoint union 
has degree two.  Flipping one sign
changes $F$ by $0$ or $\pm4$.  
Let $k$ denote one of the  twenty-four vertices.
Then, using the notation $z_k$ from \eqref{zk_def}, we get
\noi
\begin{align*}
 z_k^2=4(\eps_u+\eps_v)^2,
\end{align*}
where $u,v$ are the two neighbors of $k$ in a triangle or four-cycle.
Thus,
$\PP(z_k^2=16)=\PP(z_k=0)=1/2$.
Formula \eqref{J_R_coord} gives

\noi
\begin{align}
 \cJ(F) =\frac{1}{4} \sum_k \E[ z_k^2 (z_k^2 -1) ] 
 = \frac{1}{4} \times 24 \times [4^2 \times (4^2-1) ] \times\frac{1}{2}
= 720.
 \notag
\end{align}
Thus the four moments do not detect the large coordinate jumps, while
vanishing maximal influence does.
\end{remark}

\subsection{Two moments and total variation are not enough}
\label{SEC_42}

\begin{example}[\textsf{Two moments do not suffice}]
Let $B_{n,1},\ldots,B_{n,n}$ be independent Bernoulli variables with
$p_{n,k}=1/(2n)$ and put
\noi
\begin{align}
 X_n:=1+2\sum_{k=1}^nB_{n,k},
 \qquad
 F_n:=X_n-2.
 \notag
\end{align}
Then $F_n$ belongs to the first   Rademacher chaos,
$\E[X_n]=2$, and
\noi
\begin{align}
 \Var(X_n)=2\left(1-\frac1{2n}\right)\to2,
 \qquad
 \cM(F_n)\to0.
 \notag
\end{align}
Nevertheless, by the law of small numbers,
\noi
\begin{align}
 X_n\xrightarrow{\mathrm{law}}1+2\Pois(1/2),
 \notag
\end{align}
which is supported on the odd integers and is not $\Pois(2)$.
\end{example}

\begin{example}[\textsf{The role of uniform integrability}]

Fix $\lambda>0$.  Let
$S_n\sim\text{Binomial}(n,\lambda/n)$ and
$B_n\sim\operatorname{Bernoulli}(n^{-4})$ be independent, and set
$
 X_n:=S_n+nB_n
 $
 and
 $
 F_n:=X_n-\E[X_n].
$
Then $F_n$ belongs to the first Rademacher chaos,
$\E[X_n]=\lambda+n^{-3}\to\lambda$, and
\noi
\begin{align}
 \cM(F_n)
 =\max\left\{
 \frac\lambda n\left(1-\frac\lambda n\right),
 n^{-2}(1-n^{-4})
 \right\}
 \to0.
 \notag
\end{align}
Moreover,
\noi
\begin{align}
 \dTV\bigl(\cL(X_n),\Pois(\lambda)\bigr)
 \leq
 \dTV\bigl(\cL(S_n),\Pois(\lambda)\bigr)+n^{-4}
 \to0.
 \notag
\end{align}
However, a direct calculation yields
$
 \E[X_n^4]
 \to
 \lambda^4+6\lambda^3+7\lambda^2+\lambda+1.
$
Thus total-variation convergence, even together with vanishing maximal
influence, does not imply fourth-moment convergence without uniform
integrability.
\end{example}

\subsection{Exact versus limiting Poisson laws}
\label{SEC_43}

The lattice condition in the main four-moment criteria is an exact
structural requirement.  We first identify the pure-chaos variables
that realize an exact Poisson law, and then turn to limiting centered
Poisson behavior when the exact lattice structure is absent.

\begin{example}[\textsf{Exact Poisson laws in the first Poisson chaos}]
\label{ex_first_P}

Let $\eta$ be a Poisson random measure on $(\cZ, \sZ)$ with control measure $\mu$
 and let   $F=I_1^\eta(f) \in L^4(\PP)$ for some $f\in L^2(\mu)$.
 Then, necessarily we have 
$f\in  L^4(\mu)$ as well.\footnote{For $G= I_q(g)\in\cC^{ \mathsf{P}}_q$
with $g\in L^2_s(\mu^q)$, the fourth integrability of $G$ forces 
$g\in L^4(\mu^q)$; see, e.g., \cite[Theorem 1.8]{Zhe26}.}
The standard moment identities, as a consequence 
of the product formula \cite{DP18b},
 give us

\noi
\begin{align}
 m_2(F)=\int_\cZ f^2 d\mu,
 \quad
 m_3(F)=\int_\cZ f^3 d\mu,
\AND
 m_4(F)
 =3\left(\int_\cZ f^2 d\mu\right)^2
   +\int_\cZ f^4 d\mu.
\notag
\end{align}
Therefore
\noi
\begin{align}
 \cP(F)
 =\int_\cZ f(z)^2\bigl(f(z)-1\bigr)^2\,\mu(dz).
 \label{P_zero_one}
\end{align}
In particular, $\cP(F)=0$ if and only if
$f=\ind_A$ almost everywhere for some measurable set $A$ with
$\mu(A)<\infty$.  In that case

\noi
\begin{align}
 F=\eta(A)-\mu(A)
\AND
 F+\mu(A)=\eta(A)\sim\Pois(\mu(A)).
\notag\end{align}
Formula \eqref{P_zero_one} gives a concrete interpretation of the
``one-unit'' property.  Indeed,  
with $D^+$ the add-one cost operator,
we have 

\noi
\begin{align*}
 D^+_zF
 =
 F(\eta+\delta_z)-F(\eta)
 =
 f(z),
\end{align*}
so that $f(z)$ is precisely the jump of the functional produced by
adding one Poisson atom at $z$.  Hence $\cP(F)=0$ forces
$f(z)\in\{0,1\}$ for $\mu$-almost every $z\in\cZ$: an atom is either ignored
or counted exactly once.  Any other weight produces a non-unit jump
and a strictly positive contribution to the defect,
and thus prevents a Poisson law.
\end{example}

\begin{remark}[\textsf{Rigidity beyond the first Poisson chaos}]
\label{rem_exact_high_P}
The preceding phenomenon is specific to the first Poisson chaos.
Indeed, let
\[
 F=I_q^\eta(f)\in\cC_q^{ \mathsf{P}}\cap L^4(\PP),
 \qquad q\geq2.
\]
We first deduce from \eqref{D_ide} and \eqref{P_c_s} in Lemma \ref{lem_P_top}
that 

\noi
\begin{align*}
 \cP(F) 
 \geq
 (q!)^2
 \sum_{r=1}^{q-1}
 \binom{q}{r}^2
 \|f\otimes_r f\|_2^2.
\end{align*}
Hence $\cP(F)=0$ implies
$f\otimes_{1}f=0$, which forces $f=0$ and thus
$F=0$ almost surely.\footnote{Indeed, the implication
$f\otimes_1f=0 \Rightarrow  f=0$
follows directly from a Hilbert--Schmidt operator argument.
Define
$
 T_f:L^2(\mu)\longrightarrow L^2(\mu^{q-1})
$
by
$ (T_fh)(x_1,\ldots,x_{q-1}):=\int_{\cZ}  f(x_1,\ldots,x_{q-1},z)h(z)\,\mu(dz).$
Then $T_fT_f^*$ is the integral operator whose kernel is
$f\otimes_1f$.  
Hence $f\otimes_1f=0$ implies $T_fT_f^*=0$.  
Consequently, for every $g\in L^2(\mu^{q-1})$,
$ \|T_f^*g\|_2^2 =  \langle T_fT_f^*g,g\rangle =0.$
Thus $T_f^*=0$ and hence $T_f=0$.  
Since $\|T_f\|_{\rm HS}^2=\|f\|_{L^2(\mu^q)}^2$, 
it follows that $f=0$ almost everywhere.}  
As a result,  a nonzero Poisson chaos of order
$q\geq2$ always satisfies
\noi
$
 \cP(F)>0.
$
Thus exact Poisson laws occur in a pure Poisson chaos only at first
order.
\end{remark}

The strict positivity in Remark~\ref{rem_exact_high_P} is not uniform
on the unit-variance sphere of a higher Poisson chaos.  The next two
examples give, for every fixed $q\geq2$, nondegenerate variables in the
$q$-th Poisson chaos whose Poisson moment defect tends to zero.  In fact,
both sequences converge in law to a centered Poisson random variable.

\begin{example}[\textsf{A rare-cell Charlier construction}]
\label{ex_rare_charlier}
Fix an integer $q\geq2$.  Let $\eta$ be a Poisson random measure on
$\R_+$ with Lebesgue control measure.  Put
\[
 \lambda_n:=\frac1n,
 \qquad
 M_n:=q!\,n^q=q!\lambda_n^{-q},
 \qquad
 A_{n,j}:=\left(\frac{j-1}{n},\frac jn\right],
 \quad 1\leq j\leq M_n.
\]
Define
\[
 f_n
 :=\frac1{q!}\sum_{j=1}^{M_n}\ind_{A_{n,j}}^{\ot q},
 \qquad
 F_n:=I_q^\eta(f_n),
\]
and write
\[
 Z_{n,j}
 :=\frac1{q!}I_q^\eta\bigl(\ind_{A_{n,j}}^{\ot q}\bigr),
 \qquad
 F_n=\sum_{j=1}^{M_n}Z_{n,j}.
\]
The cells $A_{n,j}$ are disjoint with the same length, so the centered random variables
$Z_{n,1},\ldots,Z_{n,M_n}$ are independent with the same law.  Since
$\cP=\cum_4-2\cum_3+\cum_2$ and cumulants are additive over
independent sums,
$
 \cP(F_n)=M_n\cP(Z_{n,1}).
$

We first estimate the moment defect of one cell directly from the Poisson
product formula \cite{DP18b},
to which we refer readers for any unexplained notation.
  Let $A$ be a measurable set with
$\mu(A)=\lambda\leq1$ and put
\[
 g_\lambda:=\frac1{q!}\ind_A^{\ot q},
 \qquad
 Z:=I_q^\eta(g_\lambda).
\]
By the Poisson isometry,
we have 
$
 m_2(Z)
 =q!\norm{g_\lambda}_2^2
 =\frac{\lambda^q}{q!}
 =:v_\lambda.
$
For $0\leq\ell\leq r\leq q$, we have
the following simple expression for the
$(r,\ell)$-star contraction
\[
 g_\lambda\star_r^\ell g_\lambda
 =
 \frac{\lambda^\ell}{(q!)^2}
 \ind_A^{\ot(2q-r-\ell)}  
\]
and hence
$
 \big\| g_\lambda\star_r^\ell g_\lambda \big\|_2^2
 =
 \frac{\lambda^{2q-r+\ell}}{(q!)^4}.
$
In particular,
$
 q!\,g_\lambda\star_q^0g_\lambda=g_\lambda.
$
By the product formula for multiple Poisson integrals
(see, e.g., 
\cite[Proposition 2.1]{DP18b}), we can write 
\[
 Z^2
 =
 \sum_{r=0}^q
 r!\binom qr^2
 \sum_{\ell=0}^r
 \binom r\ell
 I_{2q-r-\ell}^\eta
 \left( g_\lambda \star_r^\ell g_\lambda \right),
\]
where $g_\lambda \star_r^\ell g_\lambda$ is symmetric.
The term corresponding to $(r,\ell)=(q,0)$ is exactly $Z$.
Thus we may write
\[
 Z^2=Z+R_\lambda,
\]
where $R_\lambda$ is the sum of all the remaining terms.
For every $(r,\ell)\neq(q,0)$,
\[
 2q-r+\ell\geq q+1.
\]
By the Poisson isometry and the contraction estimate above, 
each remaining
summand in the product formula has squared $L^2(\PP)$-norm bounded by
\[
 C_q\lambda^{2q-r+\ell}
 \leq C_q\lambda^{q+1}.
\]
Since only finitely many summands occur, with their number depending
only on $q$, the Cauchy--Schwarz inequality gives
$
 \E[R_\lambda^2]
 =
 \E\bigl[(Z^2-Z)^2\bigr]
 =
 O_q(\lambda^{q+1}).
$
Since $Z$ is centered, we get
\[
 \cP(Z) =
 \E\bigl[(Z^2-Z)^2\bigr]-3m_2(Z)^2.
\]
As $q\geq2$,
$
 m_2(Z)^2
 =
 O_q(\lambda^{2q})
 =
 O_q(\lambda^{q+1}),
$
and therefore
$
 |\cP(Z)|
 =
 O_q(\lambda^{q+1}).
$
Consequently,
\[
 \cP(F_n)
 =
 M_n\cP(Z_{n,1})
 =
 O_q(\lambda_n)
 =
 O_q(n^{-1}).
\]
Moreover, since $F_n$ is a nonzero element of the $q$-th Poisson
chaos, Remark~\ref{rem_exact_high_P} shows that
\[
 0<\cP(F_n)=O_q(n^{-1})\longrightarrow0.
\]
The normalization is exact:
$
 m_2(F_n)
 =
 M_n\frac{\lambda_n^q}{q!}
 =
 1$
 for every $n$.

The product formula also gives the corresponding third- and
fourth-moment asymptotics.  Indeed, in the computation of
$\E[Z^3]$, only the $q$-th chaos projection of $Z^2$ contributes.
The relevant pairs satisfy $r+\ell=q$.  The pair
$(r,\ell)=(q,0)$ contributes $v_\lambda$, whereas every other such
pair has $\ell\geq1$ and contributes
$O_q(\lambda^{q+1})$.  Hence
\[
 m_3(Z)
 =
 v_\lambda+O_q(\lambda^{q+1}).
\]
Since $Z^2=Z+R_\lambda$, we further have
\begin{align*}
 m_4(Z)
 &=\E\bigl[(Z+R_\lambda)^2\bigr] 
 =m_2(Z)
   +2\bigl[m_3(Z)-m_2(Z)\bigr]
   +\E[R_\lambda^2] \\
 &=v_\lambda+O_q(\lambda^{q+1}).
\end{align*}
It follows from the additivity of cumulants that
\[
 \cum_2(F_n)=1,
 \qquad
 \cum_3(F_n)=1+O_q(n^{-1}),
 \qquad
 \cum_4(F_n)=1+O_q(n^{-1}),
\]
or, equivalently,
$m_2(F_n)=1,$ 
$m_3(F_n)=1+O_q(n^{-1}),$
and
$ m_4(F_n)=4+O_q(n^{-1}).
$

We finally identify the limiting law.  Put
\[
 K_n
 :=\sum_{j=1}^{M_n}
 \ind_{\{\eta(A_{n,j})=q\}}
\AND
 \vartheta_n:=\E[K_n].
\]
If
$
 p_n
 :=e^{-\lambda_n}\frac{\lambda_n^q}{q!},
$
then
\[
 K_n\sim\operatorname{Binomial}(M_n,p_n)
\AND
 \vartheta_n
 =M_np_n
 =e^{-\lambda_n}
 \longrightarrow1.
\]
Hence the law of small numbers gives
$
 K_n\xrightarrow{\rm law}
 \Pois(1).
$

It remains to compare $F_n$ with the centered rare-cell count.
Let $N\sim\Pois(\lambda)$ and let $Z$ be as above.  The Charlier
representation gives\footnote{The Charlier formula can be obtained
by the relation $\sum_{q\geq 0} C_q(x,\lambda) \frac{t^q}{q!} = e^{-\lambda t} (1+t)^x$;
see, e.g., \cite[Proposition 6.2.8]{Pri09}. }
\[
 Z
 =
 \frac1{q!}C_q(N,\lambda)
 =
 \frac1{q!}
 \sum_{s=0}^q
 \binom qs(-\lambda)^{q-s}(N)_s,
\]
where $(N)_0:=1$ and
$
 (N)_s:=N(N-1)\cdots(N-s+1),$
for $s\in\N$.
In particular,
$
 Z(q)=1+O_q(\lambda).
$
If
\[
 p:=e^{-\lambda}\frac{\lambda^q}{q!},
 \qquad
 Y:=\ind_{\{N=q\}}-p,
\]
then
\[
 \begin{split}
 \E[(Z-Y)^2]
 &=
 \E[Z^2]+p(1-p)-2pZ(q)
 =
 O_q(\lambda^{q+1}),
 \end{split}
\]
where we used
$
 \E[Z^2]=\frac{\lambda^q}{q!},$
and
 $p
 =\frac{\lambda^q}{q!}
 +O_q(\lambda^{q+1}),
$
together with $Z(q)=1+O_q(\lambda)$ and
$p^2=O_q(\lambda^{2q})$.
Applying this estimate independently on the $M_n$ disjoint cells
gives
\noi
\begin{align}
 \E\bigl[
 \{F_n-(K_n-\vartheta_n)\}^2
 \bigr]
 =
 M_nO_q(\lambda_n^{q+1})
 =
 O_q(n^{-1}).
 \label{rare_L2}
\end{align}
Since $\vartheta_n\to1$, the law of small numbers and the preceding
$L^2$ estimate yield $1 + F_n\xrightarrow{\rm law} \Pois(1)$.
Thus, for every fixed $q\geq2$, we have constructed a unit-variance
sequence in the $q$-th Poisson chaos such that
$
 \cP(F_n)=O_q(n^{-1}) \to 0
$
and
$1 + F_n\xrightarrow{\rm law} \Pois(1)$.

\end{example}

The sparse $q$-clique count considered below is a standard
Poisson $U$-statistic, and its Poisson asymptotics follow from
general results on sparse subgraph counts in random geometric
graphs; see, e.g., \cite{BP14,DST16}.  Our purpose here is
different: we consider its highest-chaos projection and show
directly that it provides a nondegenerate sequence in the pure
$q$-th Poisson chaos whose Poisson moment defect tends to zero.

\begin{example}[\textsf{Sparse geometric clique projections}]
\label{ex_geo_clique}
Fix $q\geq2$ and $d\geq1$.  Let $\eta$ be a stationary Poisson point
process of unit intensity on $\R^d$.  With $y_0:=0$, define
\noi
\begin{align}
 a_{d,q}
 :=\int_{(\R^d)^{q-1}}
 \ind_{\left\{
 \max_{0\leq i<j\leq q-1}\norm{y_i-y_j}\leq1
 \right\}}
 \dd y_1\cdots\dd y_{q-1}.
\notag 
\end{align}
The constant $a_{d,q}$ is finite and strictly positive: the domain of
integration is contained in $B(0,1)^{q-1}$ and contains
$B(0,1/2)^{q-1}$.

Put
\noi
\begin{align}
 L_n:=n^{1/d},
 \quad
W_n:=[0,L_n]^d,
 \AND
 r_n:=
 \left(\frac{q!}{a_{d,q}n}\right)^{1/[d(q-1)]}.
\notag 
\end{align}
Define the symmetric kernel

\noi
\begin{align}
 h_n(x_1,\ldots,x_q)
 :=\ind_{\{x_1,\ldots,x_q\in W_n\}}
 \ind_{\left\{
 \max_{1\leq i<j\leq q}\norm{x_i-x_j}\leq r_n
 \right\}}.
\notag 
\end{align}
Let $\eta_{\ne}^q$ denote the set of ordered $q$-tuples of distinct
points of $\eta$.  Then

\noi
\begin{align}
 U_n
 :=\frac1{q!}
 \sum_{(x_1,\ldots,x_q)\in\eta_{\ne}^q}
 h_n(x_1,\ldots,x_q)
\notag 
\end{align}
is the number of $q$-cliques in the random geometric graph on
$\eta\cap W_n$ with connection radius $r_n$.
Set

\noi
\begin{align}
 F_n
 :=I_q^\eta\left(\frac{h_n}{q!}\right)
\AND
 \vartheta_n
 :=\E[U_n]
 =\frac1{q!}\int_{(\R^d)^q}h_n.
\notag 
\end{align}
By the Wiener--It\^o expansion of a Poisson $U$-statistic
(see, e.g., \cite[Proposition~12.11]{LP18}),
\[
 U_n-\vartheta_n
 =\sum_{j=1}^q I_j^\eta(g_{n,j}),
 \qquad
 g_{n,q}=\frac{h_n}{q!}.
\]
Hence $F_n$ is the highest-chaos projection of
$U_n-\vartheta_n$.

For all sufficiently large $n$, $2r_n<L_n$.  If the first point is at
least distance $r_n$ from the boundary of $W_n$, anchoring it and
rescaling the remaining $q-1$ points gives the lower bound below; the
same change of variables without the boundary restriction gives the
upper bound:

\noi
\begin{align}
 a_{d,q}r_n^{d(q-1)}(L_n-2r_n)^d
 \leq\int h_n
 \leq a_{d,q}r_n^{d(q-1)}L_n^d.
\notag 
\end{align}
Since
$
 a_{d,q}n r_n^{d(q-1)}=q!,$
we obtain
$\big(1-\frac{2r_n}{L_n}\big)^d
 \leq\vartheta_n\leq1$,
 so that 
 $\vartheta_n\to 1$.
Because $h_n^2=h_n$, the Poisson isometry gives
\noi
\begin{align}
 m_2(F_n)
 =q! \big\| \frac{h_n}{q!} \big\|_{L^2((\R^d)^q)}^2
 =\frac1{q!}\int h_n
 =\vartheta_n.
 \notag 
\end{align}

We next compute the third and fourth cumulants.  Fix
$m\in\{3,4\}$.  We use the partition notation of
\cite[Section~3]{LPST14}.  Arrange the $mq$ arguments into the
$m$ rows
\[
 J_i:=\{(i-1)q+1,\ldots,iq\},
 \qquad 1\leq i\leq m.
\]
Let $\Pi(q,\ldots,q)$ be the set of partitions $\sigma$ of
$[mq]$ such that
\[
 \abs{B\cap J_i}\leq1
 \qquad
 \text{for every block $B\in\sigma$ and every $i$}.
\]
Let $\widetilde\Pi_{\geq2}(q,\ldots,q)$ denote the subset of
$\Pi_{\geq2}(q,\ldots,q)$ consisting of those partitions $\sigma$
for which the following graph on $\{1,\ldots,m\}$ is connected:
two row indices $i$ and $j$ are joined whenever some block of
$\sigma$ contains one element of $J_i$ and one element of $J_j$.
 For a function $f$ on
$(\R^d)^{mq}$, write $f_\sigma$ for the function obtained by
identifying arguments whose indices belong to the same block of
$\sigma$.\footnote{For example, if $m=3$, $q=2$, and
$
 \sigma=\big\{\{1,3,5\},\{2,4,6\}\big\},
$
then
$
 (h_n^{\otimes3})_\sigma(y_1,y_2)
 =h_n(y_1,y_2)^3
 =h_n(y_1,y_2).$
 }

Since $h_n$ is bounded and has bounded support,
\cite[Theorem~3.1]{LPST14} applies to
$F_n=I_q^\eta(h_n/q!)$ and gives
\[
 \cum_m(F_n)
 =
 \frac1{(q!)^m}
 \sum_{\sigma\in\widetilde\Pi_{\geq2}(q,\ldots,q)}
 \int (h_n^{\otimes m})_\sigma\,
 d\mu^{\abs{\sigma}}.
\]
Put $b:=\abs{\sigma}$.  Since no block contains two positions from
the same row, every block has size at most $m$; as there are $mq$
positions altogether, necessarily $b\geq q$.

If $b=q$, every block meets every row exactly once.  After labeling
the blocks by their positions in the first row, each of the remaining
$m-1$ rows can be matched to them in $q!$ ways.  Hence there are
$(q!)^{m-1}$ such partitions.  For each of them, symmetry and
$h_n^m=h_n$ imply that the associated integral equals $\int h_n$.
Their total contribution is therefore
\[
 (q!)^{m-1}(q!)^{-m}\int h_n
 =\vartheta_n.
\]

Suppose that $b\geq q+1$.  
Form a graph whose vertices are the blocks
of $\sigma$, joining two blocks whenever they occur in the same row.
Connectedness of $\sigma$ implies that this graph is connected, and
each edge forces the corresponding spatial variables to be at
distance at most $r_n$.  Choosing a spanning tree and integrating
successively from its root gives
\[
 \int (h_n^{\otimes m})_\sigma\,d\mu^{b}
 \leq
 C_{d,q,m}\,n r_n^{d(b-1)}.
\]
Here the root contributes at most $\mu(W_n)=n$, while each of the
$b-1$ tree edges contributes at most the volume of a ball of radius
$r_n$.  Since
\[
 a_{d,q}n r_n^{d(q-1)}=q!,
\]
for $b\geq q+1$ we have
\[
 n r_n^{d(b-1)}
 =
 \frac{q!}{a_{d,q}}r_n^{d(b-q)}
 =
 O_{d,q}(r_n^d).
\]
There are only finitely many admissible partitions for fixed $q$ and
$m$.  Consequently,
\[
 \cum_3(F_n)
 =\vartheta_n+O_{d,q}(r_n^d)
 \AND
 \cum_4(F_n)
 =\vartheta_n+O_{d,q}(r_n^d).
\]
Together with
$\cum_2(F_n)=m_2(F_n)=\vartheta_n$, this yields
\[
 0<\cP(F_n)
 =
 O_{d,q}(r_n^d)
 =
 O_{d,q}\bigl(n^{-1/(q-1)}\bigr)
 \longrightarrow0.
\]
The strict inequality follows from
Remark~\ref{rem_exact_high_P}, since $F_n$ is a nonzero element of the
$q$-th Poisson chaos.  As $\vartheta_n\to1$, we also obtain
$
 \big( m_2(F_n),
 m_3(F_n),
 m_4(F_n) \big)
 \longrightarrow (1,1,4).
$

Finally, the same partition argument applies to every fixed
$m\geq2$ and gives
$ \cum_m(F_n)
 =
 \vartheta_n+O_{d,q,m}(r_n^d).
$
Thus,
$
 \cum_1(F_n)=0$
and
$\cum_m(F_n)\longrightarrow1$,
for every $m\geq2$.
Since the centered Poisson law is moment-determinate, 
we have
$1 + F_n \to \Pois(1)$
in law. 

\end{example}

\begin{remark}[\textsf{Scope of the preceding constructions}]
\label{rem_high_P_scope}
In both examples, $F_n$ is an exact element of the fixed $q$-th
Poisson chaos.  In Example~\ref{ex_rare_charlier}, it is
$L^2(\PP)$-asymptotically equivalent to $K_n-\vartheta_n$ by
\eqref{rare_L2}; in Example~\ref{ex_geo_clique}, it is the
highest-chaos projection of $U_n-\vartheta_n$.  However,
$F_n+\vartheta_n$ is not, in general, nonnegative integer-valued.
Consequently, these examples do not directly satisfy the lattice
condition \eqref{cond_La}, and Theorem~\ref{thm_P} cannot be invoked to
obtain total-variation convergence for $F_n+\vartheta_n$.  Their
purpose is instead to show that the strict inequality $\cP(F)>0$ for
nonzero higher-order Poisson chaoses is not uniform: for every fixed
$q\geq2$, one can have
\noi
\begin{align}
 m_2(F_n)\longrightarrow1,
 \qquad
 \cP(F_n)\longrightarrow0,
 \qquad
 F_n\xrightarrow{\rm law}P_1-1.
 \notag 
\end{align}
\end{remark}

The two constructions show that centered Poisson laws can occur as
weak limits of pure higher-order Poisson chaoses even though exact
representation is impossible.  We next return to exact representation
on the Rademacher space.  The law of small numbers shows that centered
Poisson laws are limits of first-chaos Rademacher variables.  Exact
representation is nevertheless impossible.

\begin{proposition}[\textsf{No exact Poisson law in the first Rademacher chaos}]
\label{prop_no_exact_R}

Let $F$ be a random variable in the first Rademacher chaos. 
Then, for every $\lambda>0$,
$
 \cL(F+\lambda)\ne\Pois(\lambda).
$

\end{proposition}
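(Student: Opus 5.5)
We argue by contradiction and use the coordinate structure of the first chaos. Write $F=\sum_k f(k)Y_k$ with $f\in\ell^2(\N)$. The centered variables $Y_k$ are independent, and $Y_k$ takes only the two values $\sqrt{q_k/p_k}$ and $-\sqrt{p_k/q_k}$. Suppose that $F+\lambda\sim\Pois(\lambda)$. Then $F+\lambda\in\N_0$ almost surely, so the lattice condition \eqref{cond_La} holds. By the discussion after \eqref{D_inf}, each $z_k=F_k^+-F_k^-$ is then an integer almost surely. For a first-chaos variable, $z_k=f(k)\bigl(\sqrt{q_k/p_k}+\sqrt{p_k/q_k}\bigr)=f(k)/\sqrt{p_kq_k}$ is deterministic. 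Hence every active coordinate ($f(k)\neq0$) changes $F$ by a nonzero integer $z_k$, and $F+\lambda=c+\sum_k z_kB_k$ with $B_k=\ind_{\{X_k=1\}}$ independent Bernoulli$(p_k)$ variables and $c$ a constant.

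The key step is that each active coordinate carries an atom that does not wash out. Since $\sum_k f(k)^2<\infty$ and $F+\lambda\sim\Pois(\lambda)$ is nondegenerate, at least one coordinate is active. I would then use a decomposition: write $F+\lambda=z_kB_k+W_k$ with $W_k$ independent of $B_k$. The law of $F+\lambda$ is then the law of $W_k$ convolved with the two-point law $q_k\delta_0+p_k\delta_{z_k}$. Taking characteristic functions gives
\[
 e^{\lambda(e^{is}-1)}=(q_k+p_ke^{isz_k})\,\phi_{W_k}(s).
\]
The left side never vanishes. However, $q_k+p_ke^{isz_k}=0$ for $s=\pi/z_k$ whenever $p_k=1/2$. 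So the symmetric case is immediately excluded, but the non-symmetric case needs a different idea.

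For general $p_k$ I would exploit infinite divisibility in its strongest form, namely the Lévy--Cramér/Raikov theorem: every factor of a Poisson law is a shifted Poisson law. (By Raikov's theorem, if a Poisson variable is a sum of two independent variables, each summand is a shifted Poisson variable.) Apply this to the independent summands $z_kB_k$ and $W_k$. It forces $z_kB_k$ to be a shift of a Poisson variable. But $z_kB_k$ takes exactly two values with positive probabilities, whereas any shifted Poisson law with positive parameter has infinite support, and a degenerate one has a single atom. This is a contradiction. The main obstacle is precisely this factorization step. If one prefers to avoid quoting Raikov, an alternative is the cumulant route: the $r$-th cumulant of $F$ equals $\sum_k z_k^r\kappa_r(B_k)$ and must equal $\lambda$ for all $r\geq2$. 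Comparing growth in $r$ (the Bernoulli cumulants grow factorially while the Poisson ones are constant) should also yield a contradiction, but controlling infinitely many coordinates makes that route messier. So I would commit to the Raikov argument, which handles infinitely many coordinates directly.
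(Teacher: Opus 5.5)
Your proof is correct, but its decisive step differs from the paper's.

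Both arguments isolate an active coordinate and write $F+\lambda$ as the independent sum of a nondegenerate two-point variable and a remainder $W_k$. You then invoke Raikov's theorem, whereas the paper proves the needed obstruction directly. After replacing $B_k$ by $1-B_k$ when the integer coefficient is negative, the paper writes $X=W_k+m_k\widetilde B_k$ with $m_k\in\N$ and $0\leq W_k\leq X$. Then $\E[R^{W_k}]\leq e^{\lambda(R-1)}$, so $G_k(z)=\E[z^{W_k}]$ is entire. Continuing $e^{\lambda(z-1)}=G_k(z)(1-r_k+r_kz^{m_k})$ to all of $\mathbb C$ gives a contradiction, since the polynomial has a complex zero and the exponential has none.

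This is exactly the repair of your characteristic-function attempt. For $p_k\neq\frac12$ the zeros of the Bernoulli factor lie off the unit circle, so you must leave the real axis. That is legitimate because domination by a Poisson variable gives $W_k$ exponential moments of all orders. It is also the elementary half of the standard proof of Raikov's theorem, without the Hadamard-factorization step that identifies the factor as Poisson.

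Your route buys brevity and generality. Raikov's theorem applies to arbitrary independent real summands, so your integrality step is not even needed: the split $F+\lambda=f(k)Y_k+(F+\lambda-f(k)Y_k)$ already suffices. The paper's route is self-contained, but it needs the lattice reduction and the sign normalization in order to work with generating functions.

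A minor point: the global representation $F+\lambda=c+\sum_kz_kB_k$ can fail to converge without recentering (e.g.\ if infinitely many $p_k$ tend to $1$). You only use the single-coordinate split $F+\lambda=z_kB_k+W_k$, so this does not affect the argument.
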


\begin{proof}
If $F=0$, or if $F$ depends on only finitely many coordinates, then
$F+\lambda$ takes only finitely many values and hence cannot have a
Poisson distribution.  We may therefore assume that $F$ depends on
infinitely many coordinates.  Writing $F$ as a linear statistic of
independent Bernoulli random variables, let $(B_k)_{k\geq1}$ be
independent Bernoulli variables with
$p_k:=\PP(B_k=1)\in(0,1)$, and set
$
 Y_k:=\frac{B_k-p_k}{\sqrt{p_k(1-p_k)}}.
$
Then
\[
 F=\sum_{k\geq1}c_kY_k
\quad\text{with}
\quad
 \sum_{k\geq1}c_k^2<\infty.
\]
After discarding the zero coefficients and relabeling the remaining
coordinates, we may further assume that
$
 c_k\neq0
 $
for every  $k\in\N.
$

Argue by contradiction and assume 
$X:=F+\lambda\sim\Pois(\lambda)$ for some $\lambda >0$.
With $c_k\neq 0$, we set

\noi
\begin{align}
 a_k:=\frac{c_k}{\sqrt{p_k(1-p_k)}},
\notag
\end{align}
and then we write
$
 X=Z_k+a_kB_k,
$
where $Z_k$ is independent of $B_k$.  Since $X$ is $\N_0$-valued and
$B_k = 0$ with positive probability, Fubini's theorem
shows that $Z_k\in\N_0$ almost surely, and thus $a_k\in\Z\setminus\{0\}$
with $Z_k+a_k\in\N_0$.

If $a_k>0$, put $m_k=a_k$, $W_k=Z_k$ and
$\widetilde B_k=B_k$.  If $a_k<0$, put
$m_k=-a_k$, $W_k=Z_k+a_k$ and
$\widetilde B_k=1-B_k$.  In both cases,

\noi
\begin{align}
 X=W_k+m_k\widetilde B_k,
\label{exact_R_pos}
\end{align}
where $m_k\in\N$, $W_k\in\N_0$, and $W_k$ is independent of the
nondegenerate Bernoulli variable $\widetilde B_k$.  Moreover,
$0\leq W_k\leq X$ almost surely.

Let $G_k(z):=\E[z^{W_k}]$ for $z\in\mathbb{C}$.
For every $R\geq1$, the bound $W_k\leq X$ gives
$ \E[R^{W_k}]
 \leq\E[R^X]
 =\exp[\lambda(R-1)]$.
This implies that $G_k$ is an entire function on the complex plane. 
Since
$r_k:=\PP(\widetilde B_k=1)\in(0,1)$, independence in
\eqref{exact_R_pos} yields, first on the unit disk and then everywhere
by analytic continuation,
$
 \exp\{\lambda(z-1)\}
 =G_k(z)\bigl(1-r_k+r_kz^{m_k}\bigr).
$
The polynomial on the right has a complex zero, whereas the
exponential on the left never vanishes,
which leads to the desired     contradiction.
\qedhere

\end{proof}

\subsection{Bounded-bias coordinate rigidity}
\label{SEC_44}

In this section, we present the proof of Theorem \ref{thm_bias}.
For this purpose, we need the following.

\begin{lemma}[\textsf{Zero-set bound}]
\label{lem_zero}
Let
$
 P\in\bigoplus_{j=0}^s\cC_j^{\mathsf{R}}
$
be nonzero with $s\in\N_0$, and let $r:=\deg(P)\leq s$.
Then, for every
$(i_1,\ldots,i_r)\in\Delta_r$ such that
$
 D_{i_r}\cdots D_{i_1}P\not\equiv 0,
$
one has

\noi
\begin{align}
 \PP(P\ne0)
 \geq
 \prod_{\ell=1}^r
 \min\{p_{i_\ell},q_{i_\ell}\}.
 \label{zero_local}
\end{align}
Consequently, if, for some
$\eps\in(0,\frac12]$,
\[
 \eps\leq p_k\leq1-\eps
\]
for every coordinate on which $P$ depends, then
\noi
\begin{align}
 \PP(P\ne0)
 \geq\eps^r
 \geq\eps^s.
 \label{zero_bdd}
\end{align}
\end{lemma}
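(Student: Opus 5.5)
The plan is an induction on the degree $r$, peeling off one coordinate at a time through the discrete derivative $D_k$ of \eqref{Dk_def}. The underlying fact is elementary. Write $P=P_k^+\ind_{\{X_k=1\}}+P_k^-\ind_{\{X_k=-1\}}$, where $P_k^\pm$ do not depend on $X_k$ and are therefore independent of $X_k$. Then
\[
\PP(P\neq0)=p_k\PP(P_k^+\ne0)+q_k\PP(P_k^-\ne0)\geq\min\{p_k,q_k\}\,\PP\bigl(P_k^+\ne0 \text{ or } P_k^-\ne0\bigr).
\]
Moreover, $P_k^+-P_k^-=(p_kq_k)^{-1/2}D_kP$, so the event $\{P_k^+\ne0\text{ or }P_k^-\ne0\}$ contains $\{D_kP\ne0\}$. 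This gives the key one-step inequality
\[
\PP(P\ne0)\geq\min\{p_k,q_k\}\,\PP(D_kP\neq0).
\]

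For the induction, the base case is $r=0$. Then $P$ is a nonzero constant, so $\PP(P\ne0)=1$ and the product in \eqref{zero_local} is empty. For $r\geq1$, take $(i_1,\ldots,i_r)\in\Delta_r$ with $D_{i_r}\cdots D_{i_1}P\not\equiv0$. I will apply the one-step inequality with $k=i_1$ and then apply the induction hypothesis to $P':=D_{i_1}P$ with the tuple $(i_2,\ldots,i_r)$. To justify this step I need three facts about $D_k$ on Walsh expansions, all read off from $D_k(Y_{j_1}\cdots Y_{j_m})=\ind_{\{k\in\{j_\ell\}\}}\prod_{j_\ell\ne k}Y_{j_\ell}$, which follows since $\sqrt{p_kq_k}(Y_k^+-Y_k^-)=1$. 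First, $D_k$ maps $\bigoplus_{j\le s}\cC_j^{\mathsf R}$ into $\bigoplus_{j\le s-1}\cC_j^{\mathsf R}$. Second, $\deg(D_kP)\le\deg P-1$. Third, $D_{i_r}\cdots D_{i_2}P'=D_{i_r}\cdots D_{i_1}P\not\equiv0$, so $P'\ne0$. The induction hypothesis, however, is stated with $r=\deg(P')$, so I must check that $\deg P'=r-1$ exactly. Since $\deg P'\le r-1$ and $r-1$ derivatives applied to $P'$ give something nonzero, while $\deg P'+1$ derivatives of $P'$ vanish, we get $\deg P'\geq r-1$. Hence $\deg P'=r-1$, and the induction closes. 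The same remark settles the formulation: I will simply prove the statement for any tuple with $r=\deg P$ and a nonvanishing $r$-fold derivative.

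For \eqref{zero_bdd}, I must produce such a tuple. Take a nonzero top-degree Walsh coefficient $g(i_1,\ldots,i_r)$ with distinct indices. Then $D_{i_r}\cdots D_{i_1}P=r!\,g(i_1,\ldots,i_r)$, or $g$ up to the symmetrization convention, which is a nonzero constant. Every $i_\ell$ is a coordinate on which $P$ depends, so $\min\{p_{i_\ell},q_{i_\ell}\}\ge\eps$. This gives $\PP(P\ne0)\ge\eps^r\ge\eps^s$, using $\eps\le1$ and $r\le s$.

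I expect the main obstacle to be technical rather than conceptual: making the coordinate decomposition and the formula for $D_k$ rigorous for $L^2$ elements depending on infinitely many coordinates. This requires checking that $P_k^\pm$ are well defined almost surely and independent of $X_k$. It also requires verifying the action of $D_k$ on infinite Walsh series via $L^2$ convergence, using finite-support approximations and the closedness of $\bigoplus_{j\le s}\cC_j^{\mathsf R}$ noted in Section~\ref{SEC_23}.
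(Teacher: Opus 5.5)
Your proposal is correct and follows essentially the same route as the paper. Both arguments induct on the degree using the one-step inequality $\PP(P\ne0)\geq\min\{p_k,q_k\}\,\PP(D_kP\ne0)$ with $k=i_1$, which the paper obtains by writing $P=A+Y_kB$ with $B=D_kP$ (your $P_k^\pm$ formulation is equivalent), and then apply the induction hypothesis to $D_{i_1}P$ with $(i_2,\ldots,i_r)$. Your extra checks fill in details the paper leaves implicit when it says \eqref{zero_bdd} follows immediately: that $\deg(D_{i_1}P)=r-1$ exactly, and that a tuple with nonvanishing $r$-fold derivative exists, obtained from a nonzero top-degree coefficient.
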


\begin{proof}
We prove the local estimate \eqref{zero_local} by induction on
$r=\deg(P)$.  If $r=0$, then $P$ is a nonzero constant and hence
$\PP(P\ne0)=1$.

Suppose $r\geq1$, and fix
$(i_1,\ldots,i_r)\in\Delta_r$ such that
$
 D_{i_r}\cdots D_{i_1}P\not\equiv0.
$
Put $k=i_1$.  Write

\noi
\begin{align*}
 P=A+Y_kB,
\end{align*}
where $A$ and $B$ do not depend on $Y_k$.  With our normalization of
the discrete gradient, $B=D_kP$.  In particular,

\noi
\begin{align*}
 D_{i_r}\cdots D_{i_2}B
 =
 D_{i_r}\cdots D_{i_1}P
 \not\equiv0.
\end{align*}

Conditionally on all coordinates except $Y_k$, the two possible
values of $P$ cannot both vanish on $\{B\ne0\}$.  Therefore

\noi
\begin{align*}
 \PP(P\ne0 | (Y_j)_{j\ne k})
 \geq
 \min\{p_k,q_k\}\ind_{\{B\ne0\}}.
\end{align*}
Taking expectations gives

\noi
\begin{align*}
 \PP(P\ne0)
 \geq
 \min\{p_k,q_k\}\PP(B\ne0).
\end{align*}
Applying the induction hypothesis to $B=D_kP$ and the coordinates
$i_2,\ldots,i_r$, we obtain

\noi
\begin{align*}
 \PP(P\ne0)
 &\geq
 \prod_{\ell=1}^r
 \min\{p_{i_\ell},q_{i_\ell}\}.
\end{align*}
This proves \eqref{zero_local}.  The uniform bound
\eqref{zero_bdd} follows immediately.
\end{proof}

\begin{proof}[Proof of Theorem \ref{thm_bias}]
Let us recall the notations $D_k, z_k, \Inf_k$ from 
\eqref{Dk_def}, \eqref{zk_def}, \eqref{Dk_rel},
and \eqref{D_inf}.

Fix a coordinate $k$ with $\Inf_k(F)>0$.  
By the representation
\eqref{Rad_chaos}, the discrete gradient $D_kF$ is a nonzero
homogeneous Rademacher polynomial of degree $d-1$.  Applying
Lemma~\ref{lem_zero} gives
\[
 \PP(D_kF\ne0)
 \geq\eps^{d-1}.
\]
Since $F+\theta$ is integer-valued,
$
 z_k=(p_kq_k)^{-1/2}D_kF
$
is integer-valued.  Therefore

\noi
\begin{align}
d\,d! \Inf_k(F)
 &=\E[(D_kF)^2]
 =p_kq_k\E[z_k^2]
 \geq p_kq_k\PP(z_k\ne0)
 \notag\\
 &\geq
 \eps(1-\eps)\eps^{d-1}
 =
 \eps^d(1-\eps),
\notag 
\end{align}
where we used the concavity of the map $p\in[\eps, 1-\eps]\mapsto p(1-p)$.
Therefore,  \eqref{inf_lower} is proved. 

In view of the second identity in \eqref{D_inf},
the number of   coordinates with positive influence
 is bounded by $d\, m_2(F) \eps^{-d}(1-\eps)^{-1}$.

For a sequence $F_n\in\cC^{\mathsf{R}}_d$ with uniformly bounded variances, 
choose an integer
\[
K >  \frac{d  }{\eps^d  (1-\eps)} \sup_n m_2(F_n).
\]   Every shifted chaotic component
 then has at most $2^K$ support
points.  If a subsequence converged weakly to
$P_\lambda\sim\Pois(\lambda)$ with $\lambda>0$, choose the
$2^K+1$ pairwise disjoint intervals

\noi
\begin{align*}
 (j-\tfrac13,j+\tfrac13),
 \qquad
 0\leq j\leq2^K.
\end{align*}
Their boundaries have zero $P_\lambda$-mass, while each interval has
positive $P_\lambda$-mass.  Weak convergence would therefore force
the support of every sufficiently large member of the subsequence to
meet all $2^K+1$ intervals, contradicting the fact that it has at most
$2^K$ points.

Hence, the proof is completed. 
\end{proof}

\begin{remark}[\textsf{Bias rigidity versus unit-jump rigidity}]
The rare-event mechanism behind Poisson approximation takes rather
different forms on Rademacher and Poisson space.

\smallskip
(i) In the Rademacher setting, consider  a sequence 
$F_n\in\cC^{\mathsf{R}}_d$ with symmetric kernels $f_n$
such that $m_2(F_n)$ is bounded away from zero and 
$\cM(F_n)\to0$.
Then, Theorem~\ref{thm_bias} shows that the
lattice condition \eqref{cond_La}, together with
$\cM(F_n)\to0$, forces every nonzero term of the homogeneous sum to contain at least
one coordinate whose Bernoulli parameter approaches either zero or
one.
 More precisely, whenever
\[
 f_n(i_1,\ldots,i_d)\ne0,
\]
the local zero-set estimate \eqref{zero_local} implies that at least one of the
coordinates $i_1,\ldots,i_d$ must become increasingly biased.  In
particular,
\noi
\begin{align}
 \sup_{\{(i_1,\ldots,i_d):
 f_n(i_1,\ldots,i_d)\ne0\}}
 \min_{1\leq r\leq d}
 \min\{p_{n,i_r},q_{n,i_r}\}
 \longrightarrow0.
 \label{bias_boundary}
\end{align}
Thus every nonzero interaction contains at least one coordinate whose
Bernoulli parameter approaches either zero or one.

In the first chaos, each nonzero interaction consists of a single
coordinate, and hence \eqref{bias_boundary} reduces to
\noi
\begin{align*}
 \max_{k:\Inf_k(F_n)>0}
 \min\{p_{n,k},q_{n,k}\}
 \longrightarrow0,
\end{align*}
which  is the bias rigidity
underlying the law-of-small-numbers regime.

\smallskip
(ii)
There is no analogous two-point bias parameter on Poisson space.
Instead, \eqref{P_zero_one} and \eqref{J_master} reveal a
unit-jump rigidity.  At first order, a Poisson atom is asymptotically
either ignored or counted with weight one; more generally, changes of
the functional having magnitude at least two are forced to become
negligible.  This does not require the control measure itself to become
small. See Example \ref{ex_first_P}.

Indeed, infinite divisibility gives, for every $m\geq1$,
$
 P_\lambda
 \stackrel{\mathrm{law}}{=}
 \sum_{j=1}^mP_{\lambda/m}^{(j)},
$
where the variables on the right-hand side are independent Poisson
random variables with mean $\lambda/m$.  Thus the Poisson law already
contains an intrinsic rare-event decomposition: by taking $m$ large,
it is represented as a sum of many independent counts having small
intensities.  In applications, spatial or combinatorial sparsity is a
natural way to realize the unit-jump mechanism, but, unlike the
Rademacher case, no additional abstract bias condition on the
underlying noise is required.
\end{remark}

\end{document}